\numberwithin{equation}{section} 
\newtheorem{thm}{Theorem}[section]
\newtheorem{lem}[thm]{Lemma}
\newtheorem{pro}[thm]{Proposition}
\newtheorem{cor}[thm]{Corollary}
\newcommand{\R}{\mathbb{R}}
\newcommand{\N}{\mathbb{N}}
\begin{document}
\baselineskip 14pt
\title{Metrical properties of the large products of partial quotients  in continued fractions}
\author{Bo Tan and Qing-Long Zhou$^{\dag}$}
\address{$^{1}$ School  of  Mathematics  and  Statistics,
                Huazhong  University  of Science  and  Technology, 430074 Wuhan, PR China}
          \email{tanbo@hust.edu.cn}

\address{$^{2}$ School of Science, Wuhan University of Technology, 430070 Wuhan, PR China }
\email{zhouql@whut.edu.cn}
\thanks{$^{\dag}$ Corresponding author.}
\keywords{Uniform Diophantine approximation; Continued fractions; Hausdorff dimension.}
\subjclass[2010]{Primary 11K55; Secondary 28A80, 11J83}


\date{}

\begin{abstract}
The study of products of consecutive partial quotients in the continued fraction arises naturally out of the  improvements to Dirichlet's theorem.
Achieving some variant forms of metrical theory in continued fractions,  we study the distribution of the at least two large products of partial quotients among the first $n$ terms. More precisely, writing  $[a_1(x),a_2(x),\ldots]$ the continued fraction expansion of an irrational number $x\in(0,1)$, for  a non-decreasing function $\varphi\colon \N\to\R$, we completely determine the size of the set
\begin{align*}
\mathcal{F}_2(\varphi)=\Big\{x\in[0,1)\colon  \exists 1\le k\neq l \le n,  ~&a_{k}(x)a_{k+1}(x)\ge \varphi(n), \\&a_{l}(x)a_{l+1}(x)\ge \varphi(n) \text{ for infinitely many } n\in \N  \Big\}
\end{align*}
in terms of Lebesgue measure and Hausdorff dimension.
\end{abstract}

\maketitle
{\small \tableofcontents}

\section{Introduction}
	Every irrational number $x\in[0,1)$ can be uniquely expressed as a simple infinite continued fraction expansion of the following form$\colon$
\begin{equation}\label{cf}
x =\frac{1}{a_{1}(x)+\frac{1}{a_{2}(x)+\frac{1}{a_{3}(x)+\ddots}}}:=[a_1(x),a_{2}(x),a_{3}(x),\ldots].
\end{equation}
This expansion can be induced by the Gauss map $T\colon [0,1) \to [0,1)$ defined as
\begin{equation*}
T(0)=0,~ T(x)= \frac{1}{x}\!\!\!\pmod1~\text{for} ~x\in(0,1),
\end{equation*}
with $a_{n}(x)=\lfloor\frac{1}{T^{n-1}(x)}\rfloor$ (here $\lfloor\cdot\rfloor$ denotes the greatest integer less than or equal to a real number and $T^0$ denotes the identity map).	
For each $n\ge1,$ $a_n(x)$ is called the $n$-th partial quotient of $x.$ Further, we write the finite truncations
$$[a_1(x), a_{2}(x), \ldots, a_{n}(x)]:=\frac{p_n(x)}{q_n(x)}$$
to be the $n$-th convergent to $x.$

The metrical theory of continued fractions  concerns the quantitative study of properties of partial quotients for  almost all $x\in [0,1)$.
A well studied object is the metric properties of the classical set of $\psi$-approximable numbers
$$\left\{x\in[0,1)\colon a_{n+1}(x)\ge \frac{1}{q_{n}^{2}(x)\psi\big(q_{n}(x)\big)} \text{ for infinitely many } n\in \mathbb{N} \right\},$$
where $\psi\colon [1,\infty) \to \mathbb{R}$ is a non-increasing  positive function. The theorems of
Khintchine and Jarn\'{\i}k, the fundamental results in Diophantine approximation, were both obtained by studying the above set.
A real number $x$ is $\psi$-approximable if the partial quotients in its continued fraction expansion grow fast.
In other words, the growth speed of the partial quotients reveals how well a real number can be approximated by rationals.
The well-known Borel-Bernstein theorem \cite{B11,B12}, an analogue of Borel-Cantelli `0-1' law with respect to Lebesgue measure for the set of real numbers with large partial quotients,
states that the Lebesgue measure of the set
$$\mathcal{E}_1(\varphi):=\{x\in[0,1)\colon a_{n}(x)\ge \varphi(n) \text{ for infinitely many } n\in \mathbb{N}\}$$
is either 0 or 1 according to  the convergence or divergence of the series $\sum_{n=1}^{\infty}\varphi(n)$ respectively.
The Hausdorff dimensions of $\mathcal{E}_1(\varphi)$  for different kind of functions $\varphi$ were also obtained:
Good \cite{G41} considered the  polynomial case $\varphi(n)=n^{\alpha}$ with $\alpha\ge0$;
{\L}uczak \cite{L97} solved the super-exponential case $\varphi(n)=a^{b^{n}}$ with $a, b>1$;
Wang and Wu \cite{WW08} determined completely the Hausdorff dimension of the set $\mathcal{E}_1(\varphi)$ for an arbitrary function $\varphi.$
\begin{thm} [Wang-Wu, \cite{WW08}]\label{WW}
Let $\varphi\colon \mathbb{N}\to \mathbb{R}$ be a  positive function. Write
$$\log B=\liminf_{n\to\infty}\frac{\log \varphi(n)}{n} \text{ and } \log b=\liminf_{n\to\infty}\frac{\log\log \varphi(n)}{n}.$$
\begin{itemize}
\item [\rm(1)] When $B=1,$ $\dim_{H}\mathcal{E}_1(\varphi)=1.$

\item [\rm(2)] When $B=\infty,$ $\dim_{H}\mathcal{E}_1(\varphi)=1/(1+b).$

\item [\rm(3)]  When $1<B<\infty,$ $$\dim_{H}\mathcal{E}_1(\varphi)=\inf\big\{s\ge0\colon \mathsf{P}(T, -s\log B-s\log |T'|)\le0\big\},$$
\end{itemize}
where $T'$ denotes the derivative of $T$, $\mathsf{P}$ denotes the pressure function (see the definition in  Subsection \ref{pressure}), and $\dim_H$ denotes the Hausdorff dimension.
\end{thm}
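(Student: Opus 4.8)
The plan is to sandwich $\mathcal{E}_1(\varphi)$ between sets defined by \emph{purely exponential} or \emph{purely doubly-exponential} growth of the partial quotients, compute the dimensions of these comparison sets, and pass to the limit in the comparison parameter. I will use freely the standard metric facts for the Gauss map: $|I_n(a_1,\dots,a_n)|\asymp q_n^{-2}\asymp\prod_{i\le n}a_i^{-2}$ with distortion constants independent of $n$, whence $\tfrac1n\log\sum_{(a_1,\dots,a_n)}|I_n(a_1,\dots,a_n)|^s\to P(s)$ for $s>1/2$, where $P(s):=\mathsf{P}(T,-s\log|T'|)$; moreover $\sum_{a\le M}a^{-2s}\asymp\zeta(2s)$ for $s>1/2$, while $\sum_{a\le M}a^{-2s}\asymp M^{1-2s}$ for $s<1/2$, and $\sum_{a\ge M}a^{-2s}\asymp M^{1-2s}$ for $s>1/2$. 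Since adding a constant merely shifts the pressure, $\mathsf{P}(T,-s\log B-s\log|T'|)=P(s)-s\log B$; as $P$ decreases strictly from $+\infty$ (as $s\downarrow 1/2$) through $P(1)=0$ to $-\infty$, the quantity $s_B:=\inf\{s\ge 0:P(s)\le s\log B\}$ lies in $(1/2,1)$ whenever $1<B<\infty$, is continuous and strictly decreasing in $B$, and satisfies $s_B\to 1$ as $B\downarrow 1$ and $s_B\to 1/2$ as $B\to\infty$.

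\emph{Reduction to comparison sets.} From $\liminf_n\tfrac{\log\varphi(n)}{n}=\log B$ one gets, for each $\epsilon>0$, that $\varphi(n)\ge (Be^{-\epsilon})^n$ for all large $n$, hence $\mathcal{E}_1(\varphi)\subseteq\{x:a_n(x)\ge (Be^{-\epsilon})^n\text{ infinitely often}\}$; and there is a subsequence $(n_k)$ with $\varphi(n_k)\le (Be^{\epsilon})^{n_k}$, so that $\{x:a_n(x)\in[(Be^{\epsilon})^n,2(Be^{\epsilon})^n)\text{ for all }n\}\subseteq\mathcal{E}_1(\varphi)$ — forcing a large digit at \emph{every} index is legitimate precisely because the event ``$a_n(x)\ge\varphi(n)$ infinitely often'' only has to be witnessed along the thin sequence $(n_k)$. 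When $B=\infty$ the same two inclusions hold with $\exp\!\big((be^{\mp\epsilon})^n\big)$ in place of the powers, using the $\liminf$ defining $b$ (for $b<\infty$ the bases $be^{\pm\epsilon}$ exceed $1$ once $\epsilon$ is small; for $b=\infty$ every base is allowed). It therefore suffices, for the comparison function $\psi$, to bound $\dim_H\{x:a_n(x)\ge\psi(n)\text{ i.o.}\}$ from above and $\dim_H\{x:a_n(x)\in[\psi(n),2\psi(n))\ \forall n\}$ (or a suitable $\limsup$-Cantor subset) from below, and then to let $\epsilon\downarrow 0$ (and $B\downarrow 1$ in case (1)), invoking the continuity of $s_B$.

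\emph{Upper bounds by covering.} With $G_n=\{x:a_n(x)\ge\psi(n)\}$ one has $\mathcal{E}_1\subseteq\bigcup_{n\ge N}G_n$ for every $N$; split $G_n$ into the intervals $J_{n-1}(a_1,\dots,a_{n-1})=\{x:a_i(x)=a_i\ (i<n),\ a_n(x)\ge\psi(n)\}$, of length $\asymp q_{n-1}^{-2}\psi(n)^{-1}$. For $\psi(n)=c^n$, $1<c<\infty$, one works at $s>1/2$, where the covering sum $\sum_{n\ge N}\sum_{(a_1,\dots,a_{n-1})}|J_{n-1}|^s\asymp\sum_{n\ge N}c^{-ns}e^{(n-1)P(s)}\to 0$ once $P(s)-s\log c<0$, i.e.\ $s>s_c$; hence $\dim_H\le s_c$, the upper bound in case (3), while in case (1) $\dim_H\le 1$ is automatic. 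If $B=\infty$, the crude cover of $G_n$ by its level-$n$ cylinders already gives $\sum_{n\ge N}\zeta(2s)^{n-1}\psi(n)^{1-2s}\to 0$ for \emph{every} $s>1/2$ (since $\psi$ is super-exponential), so $\dim_H\le 1/2$, sharp exactly when $b=1$. The delicate case is $B=\infty$ with $b>1$, i.e.\ $\psi(n)=\exp(\beta^n)$, $\beta>1$, where the target $\tfrac1{1+\beta}$ drops below $1/2$ and $\sum_{(a_1,\dots,a_n)}|I_n|^s$ itself diverges for $s<1/2$: one must cover $\{x:a_n(x)\ge\exp(\beta^n)\text{ i.o.}\}$ using only \emph{first-passage} times — so the running large digit is absorbed into a single interval $J_{n-1}$ rather than summed over, while the preceding digits are forced to satisfy $a_i<\exp(\beta^i)$ — and then reduce the mesh by a recursive argument that stays within the family $\{x:a_m(x)\ge\exp(c\beta^m)\text{ i.o.}\}$, $c\ge1$. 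The resulting estimate, up to factors growing at most exponentially in $n$, is $\lesssim q_{N-1}^{-2s}\sum_{n\ge N}\exp\!\big(\beta^n\,\tfrac{1-s(\beta+1)}{\beta-1}\big)$, finite exactly for $s>\tfrac1{1+\beta}$, whence $\dim_H\le\tfrac1{1+\beta}$.

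\emph{Lower bounds, and the main obstacle.} For $\psi(n)=\exp(\beta^n)$, $\beta>1$, the set $F=\{x:a_n(x)\in[\exp(\beta^n),2\exp(\beta^n))\ \forall n\}$ is a strongly inhomogeneous Moran set; a mass-distribution argument — with $\mu$ the uniform measure on $F$, $\log q_{n-1}\asymp\beta^n/(\beta-1)$, and $\mu(B(x,r))\asymp\mu(I_{n-1})\asymp\exp(-\beta^n/(\beta-1))$ as soon as a ball of radius $r\asymp q_{n-1}^{-2}\exp(-\beta^n)$ engulfs an entire generation-$n$ fan inside its parent cylinder — gives $\dim_H F\ge\tfrac1{1+\beta}$, matching the upper bound. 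For the exponential comparison $\{x:a_n(x)\ge c^n\text{ i.o.}\}$, $c>1$, forcing every digit would only give the value $1/2$ (the set then lies in $\{x:a_n(x)\to\infty\}$), so a genuine $\limsup$-Cantor set is needed: along a rapidly increasing sequence of scales, within each surviving cylinder one keeps the sub-cylinders obtained by letting the intervening digits range over a growing finite alphabet and the distinguished digit range in a block around $c^{n}$; equipping the resulting set with a conformal (pressure-weighted) measure yields Hausdorff dimension tending to $s_c$, the root of $P(s)=s\log c$, as the alphabets exhaust $\N$. Since this construction is insensitive to the positions of the distinguished scales, placing them along the thin sequence $(n_k)$ from the $\liminf$ defining $B$ gives $\dim_H\mathcal{E}_1(\varphi)\ge s_c$ for all $c>1$ in case (1), hence $\dim_H\mathcal{E}_1(\varphi)=1$; and it gives $\dim_H\mathcal{E}_1(\varphi)\ge s_{Be^{\epsilon}}$ in case (3). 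Letting $\epsilon\downarrow0$ (and $c\downarrow1$) and using continuity of the pressure closes the two remaining gaps. I expect the principal difficulties to be, first, the self-referential covering just sketched in the sub-$1/2$ regime of case (2) — where even finiteness of $\mathcal{H}^s$ requires care, and everything ultimately hinges on $\sum_{a<M}a^{-2s}\asymp M^{1-2s}$ — and, second, the pressure-weighted mass-distribution (a variant of the mass transference principle) needed for the lower bound in case (3), together with the thermodynamic-formalism facts that $s_c$ is the root of $P(s)=s\log c$ and is continuous and strictly monotone in $c$.
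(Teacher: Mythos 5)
The paper states Theorem~\ref{WW} purely as a citation to Wang and Wu \cite{WW08} and supplies no proof of it, so there is no in-text argument to compare against; the closest analogue in the manuscript is the authors' proof of their own Theorem~\ref{thm2} for $1<B<\infty$, which uses exactly the machinery you sketch (upper bound by covering with intervals of the type $J_{n-1}=\{x\colon a_i(x)=a_i\ \text{for}\ i<n,\ a_n(x)\ge\psi(n)\}$; lower bound via a sparse Cantor subset with digits confined to $\{1,\dots,M\}$ except at a rapidly increasing sequence of distinguished scales, carrying a pressure-calibrated mass and invoking the mass distribution principle, Lemma~\ref{mass}, together with the approximation of pressure by finite subsystems, Corollary~\ref{corollary}). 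Your outline is, in substance, the route of \cite{WW08}. Two points to tighten. What you call ``a variant of the mass transference principle'' is in fact the mass distribution (Frostman) principle, a different tool from Beresnevich--Velani mass transference and the one actually used here and in the paper. And the parenthetical ``for $b<\infty$ the bases $be^{\pm\epsilon}$ exceed $1$ once $\epsilon$ is small'' fails when $b=1$, since then $be^{-\epsilon}<1$; for the lower bound in that boundary case one takes $\beta=be^{+\epsilon}=e^{\epsilon}>1$, obtaining $1/(1+\beta)\to1/2$ as $\epsilon\downarrow0$, which is what is needed. Your diagnosis of the one genuinely hard step is correct: for $B=\infty$, $b>1$ the target dimension is below $1/2$, so $\sum_a a^{-2s}$ diverges and a single first-passage decomposition from a fixed starting time cannot be driven to small mesh; the recursion you gesture at works because after a first passage at time $n$ the later thresholds tighten, and the ratio of $s$-volumes between successive refinements is of order $\exp\big(\beta^n(1-s(1+\beta))\big)$, which tends to $0$ precisely when $s>1/(1+\beta)$. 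Making that recursion precise is the real content of case (2), exactly as you anticipate; there is no structural gap in the plan.
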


The existence of large partial quotients also destroys some limit theorems in continued fractions.
For example, writing $\mathcal{S}_n(x):=\sum_{k=1}^{n}a_{k}(x),$
Khintchine \cite{K35} proved that $\frac{\mathcal{S}_n(x)}{n\log n}$ converges in measure to $\frac{1}{\log 2}$ with respect to the Lebesgue measure $\mathcal{L}$,
while Philipp \cite{P88} showed that there is not a suitable regular function $\varphi\colon \mathbb{N}\to \mathbb{R}_{+}$ such that $\frac{\mathcal{S}_n(x)}{\varphi(n)}$ almost everywhere converges to a finite nonzero constant. However, Diamond and Vaaler proved in \cite{DV86} that, if the largest $a_{k}(x)$ is discarded from $\mathcal{S}_n(x),$ then for almost every $x\in[0,1),$
$$\lim_{n\to\infty}\frac{\mathcal{S}_n(x)-\max\{a_{k}(x)\colon 1\le k\le n\}}{n\log n}=\frac{1}{\log 2}.$$
A key step to obtain the above result is to show that the numbers with two large partial quotients among the first $n$ terms are rare for $n$ large enough. More precisely, define
\begin{align*}
\mathcal{F}_1(\varphi):=\big\{x\in[0,1)\colon \exists 1\le k\neq l\le n, ~&a_{k}(x)\ge \varphi(n), \\
                                                                                                            &a_{l}(x)\ge \varphi(n) \text{ for infinitely many } n\in \mathbb{N}\big\}.\end{align*}
Diamond and Vaaler \cite{DV86} proved that $\mathcal{L}\big(\mathcal{F}_1(\varphi)\big)=0$ if $\varphi(n)=n(\log n)^{c}$ with $c>1/2$ (here and subsequently $\mathcal L$ denotes the Lebesgue measure of a Borel set).
Tan, Tian and Wang \cite{TTW22} characterized completely the size of the set $\mathcal{F}_1(\varphi)$ for any monotonic $\varphi$ in the sense of the Lebesgue measure and the Hausdorff dimension.

\begin{thm}[Tan-Tian-Wang, \cite{TTW22}]
Let $\varphi\colon\N\to \R_+$ be a non-decreasing function. Then
$$ \mathcal{L}(\mathcal{F}_1(\varphi))=\left\{\begin{array}{ll}
    0, &\text{if}~\sum\limits_{k=1}^\infty \frac{n}{\varphi(n)^2} < \infty;\\
    1, &\text{if}~\sum\limits_{k=1}^\infty \frac{n}{\varphi(n)^2} =\infty.
    \end{array}
  \right.$$

Write
$$\log B=\liminf_{n\to\infty}\frac{\log\varphi(n)}n, ~~ \log b=\liminf_{n\to\infty}\frac{\log \log\varphi(n)}{n}.$$
\begin{itemize}
\item [\rm(1)] When $B=1,$ $\dim_{H}\mathcal{F}_1 (\varphi)=1.$

\item [\rm(2)] When $B=\infty,$ $\dim_{H}\mathcal{F}_1 (\varphi)=1/(1+b).$

\item [\rm(3)] When $1\le B<\infty$,
$$\dim_H{\mathcal{F}_1(\varphi)  }=\inf\{ s\geq0\colon \mathsf{P}(T,-(3s-1)\log B - s\log |T'|)\le0\}.$$
    \end{itemize}
\end{thm}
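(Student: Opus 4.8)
The plan is to establish the Lebesgue part (a zero--one law) and the Hausdorff part separately, in each case re-using the machinery behind the Borel--Bernstein theorem and the Wang--Wu theorem and absorbing the extra cost of demanding \emph{two} large products rather than one. Write
$$\mathcal F_1(\varphi)=\bigcap_{N\ge1}\bigcup_{n\ge N}\bigcup_{1\le k<l\le n}\{x:a_k(x)\ge\varphi(n),\ a_l(x)\ge\varphi(n)\}.$$

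\textbf{The zero--one law.} For the convergence half I would pass to the dyadic scales $n\in[2^{j},2^{j+1})$: by monotonicity of $\varphi$ the union of the bad events over such $n$ sits inside the single event ``at least two of $a_1,\dots,a_{2^{j+1}}$ exceed $\varphi(2^{j})$'', whose measure is $\lesssim(2^{j+1})^{2}/\varphi(2^{j})^{2}$ by a second--moment estimate for $\#\{i\le 2^{j+1}:a_i\ge\varphi(2^{j})\}$ together with $\mathcal L(a_i\ge t)\asymp 1/t$; summing over $j$ produces a series comparable to $\sum_n n/\varphi(n)^{2}$, so Borel--Cantelli gives $\mathcal L(\mathcal F_1(\varphi))=0$ in the convergent case. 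For the divergence half I would instead partition $\N$ into consecutive blocks with right endpoints $m_j$ and consider the event $B_j$ that at least two indices $n$ in block $j$ satisfy $a_n\ge\varphi(m_j)$; a correct choice of block lengths (possible precisely because $\sum_n n/\varphi(n)^{2}=\infty$, after a routine regularity reduction on $\varphi$) makes $\sum_j\mathcal L(B_j)=\infty$, while the $B_j$ are quasi--independent by the exponential ($\psi$--)mixing of the Gauss measure. The divergence Borel--Cantelli lemma then gives $\mathcal L(\limsup_j B_j)>0$, and since $\limsup_jB_j\subseteq\mathcal F_1(\varphi)$ and $\mathcal F_1(\varphi)$ satisfies a $0$--$1$ law (it agrees modulo a null set with a $T$--invariant set), we get $\mathcal L(\mathcal F_1(\varphi))=1$.

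\textbf{The dimension upper bound for $1\le B<\infty$.} Fix $s$ with $\mathsf P(T,-(3s-1)\log B-s\log|T'|)<0$; by additivity of pressure under constants this means $\mathsf P(T,-s\log|T'|)<(3s-1)\log B$ (in particular $s>1/2$). For $k<l$ I would cover $\{a_k\ge\varphi(n),\ a_l\ge\varphi(n)\}$ by taking, for every admissible word $(a_1,\dots,a_{l-1})$ with $a_k\ge\varphi(n)$, the single interval $\{y\in I(a_1,\dots,a_{l-1}):a_l(y)\ge\varphi(n)\}$, of length $\asymp q_{l-1}^{-2}\varphi(n)^{-1}$. Using $q_{l-1}\asymp q_{k-1}\,a_k\,\tilde q$ with $\tilde q$ the denominator of $[a_{k+1},\dots,a_{l-1}]$, the induced $s$--sum factorises as $\varphi(n)^{-s}\big(\sum_{(a_1,\dots,a_{k-1})}q_{k-1}^{-2s}\big)\big(\sum_{a_k\ge\varphi(n)}a_k^{-2s}\big)\big(\sum_{(a_{k+1},\dots,a_{l-1})}\tilde q^{-2s}\big)\asymp\varphi(n)^{1-3s}e^{(l-2)\mathsf P(T,-s\log|T'|)}$; summing over $1\le k<l\le n$ (a polynomial factor) and over $n\ge N$, and inserting $\varphi(n)\gtrsim B^{n(1-o(1))}$, the total tends to $0$, so $\dim_H\mathcal F_1(\varphi)\le s$, and letting $s\downarrow s_0:=\inf\{s\ge0:\mathsf P(T,-(3s-1)\log B-s\log|T'|)\le0\}$ gives the bound. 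The cases $B=1$ (bound $1$, trivial) and $B=\infty$ (bound $1/(1+b)$) follow from the same covering with the second constraint absorbed into lower--order terms, exactly as in {\L}uczak's and Wang--Wu's treatment of $\mathcal E_1$.

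\textbf{The dimension lower bound, and the main obstacle.} Fix $s<s_0$, so $\mathsf P(T,-(3s-1)\log B-s\log|T'|)>0$, and fix a large $M$. Along a super--sparse sequence $(m_j)$ chosen inside $\{n:\log\varphi(n)/n\le\log B+\varepsilon\}$, I would build the Cantor set $F$ of those $x$ with $a_{m_j-1}(x),a_{m_j}(x)\in[\varphi(m_j),2\varphi(m_j))$ for every $j$ and $a_i(x)\in\{1,\dots,M\}$ otherwise; then $n=m_j$ witnesses $x\in\mathcal F_1(\varphi)$, so $F\subseteq\mathcal F_1(\varphi)$. Putting on $F$ the measure that uses a suitable Bernoulli measure on the free coordinates and the uniform distribution on the forced windows, I would bound $\mu(B(x,r))$ at all scales $r$ by the usual dichotomy of ``cylinder'' versus ``union of cylinders'' scales and conclude $\dim_HF\ge s-o_M(1)$ by the mass distribution principle, then let $M\to\infty$. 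The crux is the scale at which $a_1,\dots,a_{m_j-1}$ (including the large $a_{m_j-1}$) are already pinned down but $a_{m_j}$ is known only to lie in $[\varphi(m_j),2\varphi(m_j))$: there the governing ball is an interval of length $\asymp q_{m_j-1}^{-2}\varphi(m_j)^{-1}$ carrying $\mu$--mass $\asymp\mu(I(a_1,\dots,a_{m_j-1}))$, and because $\log q_{m_j-1}$ \emph{already} contains one factor $\log\varphi(m_j)$ one finds $-\log(\text{length})=2\log q_{m_j-2}+3\log\varphi(m_j)$, which is exactly where the coefficient $3$ (and hence the potential $-(3s-1)\log B-s\log|T'|$) enters. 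Verifying that this, and no other intermediate scale, is the binding one is the technical heart of the proof and is precisely what distinguishes $\mathcal F_1(\varphi)$ from $\mathcal E_1(\varphi)$; it is here that the variant metrical estimates on the joint behaviour of consecutive large partial quotients and of $q_n$ announced in the abstract are required. The $B=1$ and $B=\infty$ regimes are obtained by feeding sub--exponential, respectively {\L}uczak--type, windows into the same construction.
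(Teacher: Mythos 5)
The statement you were asked to prove is Theorem~1.2 of this paper, which is a result quoted from Tan--Tian--Wang \cite{TTW22}; the present paper cites it without proof, so there is no in-paper proof of this particular statement to compare against. What the paper \emph{does} prove, in Sections~3--4, is the exactly parallel Theorem~1.4 for the set $\mathcal F_2$, and your outline follows the same overall architecture: Borel--Cantelli for the convergence half of the Lebesgue statement, a cover keyed to the pressure function for the dimension upper bound, and a sparse-window Cantor set with the mass distribution principle for the lower bound. Your exponent bookkeeping is correct: the covering factorisation $\varphi(n)^{-s}\big(\sum q_{k-1}^{-2s}\big)\big(\sum_{a_k\ge\varphi(n)}a_k^{-2s}\big)\big(\sum\tilde q^{-2s}\big)\asymp\varphi(n)^{1-3s}e^{(l-2)\mathsf P(T,-s\log|T'|)}$ gives the potential $-(3s-1)\log B-s\log|T'|$, and in the lower bound the identity $-\log(\mathrm{length})\approx 2\log q_{m_j-2}+3\log\varphi(m_j)$ at the scale between pinning $a_{m_j-1}$ and pinning $a_{m_j}$ is precisely where the coefficient $3$ enters (compare the paper's Case~II/III gap-and-length estimates in Section~4.2 for $\mathcal F_2$, where $-\log(\mathrm{length})\approx 2\log q_{n_k-2}+(2+ (1-S))\log B^{n_k}$ produces the coefficient $3-s$ and hence $3s-1-s^2$ after multiplying by $s$).

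The one place where you take a genuinely different route is the divergence half of the Lebesgue statement. You invoke $\psi$-mixing of the Gauss measure plus a $0$--$1$ law for $\mathcal F_1(\varphi)$; the paper's route for $\mathcal F_2$ is to run the Chung--Erd\H{o}s inequality \emph{inside an arbitrary basic cylinder} $I_t$ (Lemmas~\ref{measure1}--\ref{measure2}) and then upgrade positive measure to full measure via Knopp's lemma. Both work, but your $0$--$1$ law claim deserves a line of justification, since the event is defined with a time-dependent threshold $\varphi(n)$: the inclusion $\mathcal F_1(\varphi)\subseteq T^{-1}\mathcal F_1(\varphi)$ does hold exactly once $\varphi\to\infty$ (shift $k\mapsto k-1$, $l\mapsto l-1$, $n\mapsto n-1$ and use monotonicity of $\varphi$; the case $k=1$ occurs only finitely often), and combined with $T$-invariance of the Gauss measure this forces $\mathcal F_1(\varphi)$ to be essentially invariant, hence of measure $0$ or $1$ by ergodicity. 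The Knopp route avoids appealing to the mixing rate and yields the uniform-in-cylinder bound that Knopp's lemma requires directly from the same two estimates you would anyway need for quasi-independence, so for this class of problems it tends to be the shorter path; your version is fine provided the invariance step is made explicit rather than asserted.
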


Motivated by the study of uniform Diophantine approximation,  Kleinbock and Wadleigh \cite{KW18} introduced the following  set of $\varphi$-Dirichlet non-improvable numbers
$$\mathcal{E}_2(\varphi):=\{x\in[0,1)\colon a_{n}(x)a_{n+1}(x)\ge \varphi(n) \text{ for infinitely many } n\in \mathbb{N}\}.$$
They proved that $\mathcal{L}\big(\mathcal{E}_2(\varphi)\big)$ is either null or full according as the series $\sum_{n=1}^{\infty} \log \varphi(n)/\varphi(n)$ converges or diverges respectively.
The Hausdorff dimension of this set (in a generalised form) was established in \cite{HWX20,HKWW18}.

\begin{thm}[Huang-Wu-Xu, \cite{HWX20}]\label{HWX}
Let $\varphi\colon \mathbb{N}\to \mathbb{R}$ be a  positive function. Write
$$\log B=\liminf_{n\to\infty}\frac{\log \varphi(n)}{n} \text{ and } \log b=\liminf_{n\to\infty}\frac{\log\log \varphi(n)}{n}.$$
\begin{itemize}
\item [\rm(1)] When $B=1,$ $\dim_{H}\mathcal{E}_2(\varphi)=1.$

\item [\rm(2)] When $B=\infty,$ $\dim_{H}\mathcal{E}_2(\varphi)=1/(1+b).$

\item [\rm(3)] When $1<B<\infty,$ $$\dim_{H}\mathcal{E}_2(\varphi)=\inf\big\{s\ge0\colon \mathsf{P}(T, -s^{2}\log B-s\log |T'|)\le0\big\}.$$
\end{itemize}
\end{thm}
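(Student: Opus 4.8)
The plan is to treat the three regimes separately, reducing the extreme cases $B=1$ and $B=\infty$ to Theorem~\ref{WW} and doing the real work when $1<B<\infty$. I repeatedly use the elementary inclusion $\mathcal E_1(\varphi)\subseteq\mathcal E_2(\varphi)$, valid because $a_n(x)\ge\varphi(n)$ forces $a_n(x)a_{n+1}(x)\ge\varphi(n)$. If $B=1$, Theorem~\ref{WW}(1) gives $\dim_H\mathcal E_1(\varphi)=1$, hence $\dim_H\mathcal E_2(\varphi)=1$. If $B=\infty$, Theorem~\ref{WW}(2) gives the lower bound $\dim_H\mathcal E_2(\varphi)\ge1/(1+b)$; for the upper bound, $a_n(x)a_{n+1}(x)\ge\varphi(n)$ forces $\max\{a_n(x),a_{n+1}(x)\}\ge\varphi(n)^{1/2}$, so $\mathcal E_2(\varphi)$ is contained in the union of $\{x:a_n(x)\ge\varphi(n)^{1/2}\text{ i.o.}\}$ and $\{x:a_{n+1}(x)\ge\varphi(n)^{1/2}\text{ i.o.}\}$; each is an $\mathcal E_1$-set for a function with $B=\infty$ and the same $b$ (passing to $\varphi^{1/2}$ shifts $\log\log\varphi$ only by $\log2$, and shifting the index by one is harmless), so by Theorem~\ref{WW}(2) each of them, and hence their union, has dimension $1/(1+b)$.

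Now let $1<B<\infty$ and put $P(s):=\mathsf P(T,-s\log|T'|)$, which is strictly decreasing on $(1/2,1]$ with $P(1)=0$ and $P(s)\to\infty$ as $s\downarrow1/2$; thus $s_B:=\inf\{s\ge0:P(s)\le s^2\log B\}$ is the unique root of $P(s)=s^2\log B$ and lies in $(1/2,1)$, and $\mathsf P(T,-s^2\log B-s\log|T'|)=P(s)-s^2\log B$. For the upper bound, for each large $N$ write $\mathcal E_2(\varphi)\subseteq\bigcup_{n\ge N}E_n$ with $E_n=\{x:a_n(x)a_{n+1}(x)\ge\varphi(n)\}$. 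Inside a level-$(n-1)$ cylinder, $E_n$ is the union of the sub-cylinders $I(a_1,\ldots,a_{n-1},j)$ with $j\ge\varphi(n)$ (one interval, of length $\asymp(\varphi(n)q_{n-1}^{2})^{-1}$) and, for each $1\le j<\varphi(n)$, the interval $D_j=\bigcup_{i\ge\varphi(n)/j}I(a_1,\ldots,a_{n-1},j,i)$, of length $\asymp(j\,\varphi(n)\,q_{n-1}^{2})^{-1}$, sitting at one end of $I(a_1,\ldots,a_{n-1},j)$. The key idea is a scale-dependent cover: take $D_j$ by itself when $j\le\varphi(n)^s$, but for $\varphi(n)^s<j<\varphi(n)$ cover each dyadic block $j\in[2^t,2^{t+1})$ by the single interval spanned by the cylinders $I(a_1,\ldots,a_{n-1},j)$ over that block, of length $\asymp(2^t q_{n-1}^{2})^{-1}$. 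Both regimes balance at $j\asymp\varphi(n)^s$, and
$$\sum_{j\le\varphi(n)^s}\Big(\frac1{j\,\varphi(n)\,q_{n-1}^{2}}\Big)^{s}+\sum_{t:\,2^t\ge\varphi(n)^s}\Big(\frac1{2^t\,q_{n-1}^{2}}\Big)^{s}\ \asymp\ \frac{\varphi(n)^{-s^2}}{q_{n-1}^{2s}}$$
up to a factor polynomial in $\log\varphi(n)$. Summing over level-$(n-1)$ cylinders via $\sum_{a_1,\ldots,a_{n-1}}q_{n-1}^{-2s}\asymp e^{(n-1)P(s)}$, and over $n$ via $\varphi(n)\ge e^{n(\log B-\varepsilon)}$ eventually and $P(s)<s^2\log B$ for $s>s_B$, yields a finite $s$-dimensional sum; so $\mathcal H^s(\mathcal E_2(\varphi))=0$ for every $s>s_B$, i.e.\ $\dim_H\mathcal E_2(\varphi)\le s_B$.

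For the lower bound, fix $s<s_B$, so $P(s)>s^2\log B$, and (since $\mathsf P(T|_D,-s\log|T'|)\uparrow P(s)$ as the finite alphabet $D$ exhausts $\N$) fix a finite $D\subset\N$ with $\mathsf P(T|_D,-s\log|T'|)>s^2\log B$. I would build a Cantor subset $\mathcal K$ of $\mathcal E_2(\varphi)$ along a very sparse sequence $n_1<n_2<\cdots$ with $\varphi(n_k)^{1/n_k}\to B$: all partial quotients in $D$ outside the pairs $\{n_k,n_k+1\}$, and on each such pair a carefully designed choice of $a_{n_k},a_{n_k+1}$ with $a_{n_k}a_{n_k+1}\ge\varphi(n_k)$; equip $\mathcal K$ with the measure distributing mass at each step proportionally to the $s$-th powers of the offspring cylinder lengths. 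A mass-distribution estimate at the tight scales — just above the level-$(n_k-1)$ cylinder, just below the level-$n_k$ cylinder, and just below the level-$(n_k+1)$ cylinder — gives a short list of inequalities relating the free-block lengths, the ranges used at the special levels, and the exponent; optimising over the split of $\varphi(n_k)$ between $a_{n_k}$ and $a_{n_k+1}$ and over the free-block lengths, and then letting $D\uparrow\N$, these collapse to $\mathsf P(T,-s^2\log B-s\log|T'|)\ge0$, so $\dim_H\mathcal K$ can be pushed up to $s_B$.

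The main obstacle in case~(3) is producing the exponent $s^2$ rather than the weaker $2s-1$. Covering $E_n$ naively (each sub-cylinder, or each tail $D_j$, as one interval) only gives $\sum_j|D_j|^s\asymp\varphi(n)^{1-2s}q_{n-1}^{-2s}$, hence the non-sharp bound $\dim_H\mathcal E_2(\varphi)\le\inf\{s:P(s)\le(2s-1)\log B\}$, which exceeds $s_B$ because $(s-1)^2\ge0$; extracting the sharp value forces the transition in the cover at precisely $j\asymp\varphi(n)^s$, and dually forces the lower-bound construction to spread the product across two partial quotients in an optimised proportion — prescribing one huge partial quotient would merely reproduce $\mathcal E_1(\varphi)$, whose dimension is strictly below $s_B$. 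The remaining work — subsequence bookkeeping for the $\liminf$, the alphabet limit for the truncated pressures, and the non-cylinder scales in the mass-distribution argument — is routine but technical.
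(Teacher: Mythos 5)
The paper does not prove Theorem~\ref{HWX}: it is quoted from Huang, Wu and Xu \cite{HWX20} and used as a black box, so there is no in-paper proof to compare against. Your sketch is nonetheless a correct blueprint, and it closely parallels the machinery the authors do carry out for $\mathcal F_2(\varphi)$ in Section~4. The extreme cases are fine: for $B=1$ the sandwich $\mathcal E_1(\varphi)\subseteq\mathcal E_2(\varphi)$ with Theorem~\ref{WW}(1) suffices, and for $B=\infty$ your upper bound via $\max\{a_n,a_{n+1}\}\ge\varphi(n)^{1/2}$ is clean and does leave $b$ unchanged (passing to $\varphi^{1/2}$ only subtracts $\log 2$ inside the double logarithm, and shifting the index by one does not affect the $\liminf$). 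The substance is in case~(3), and you identify the crucial device correctly: covering the tails $D_j$ individually for $j\le\varphi(n)^s$ but switching to dyadic spans of whole $n$-cylinders for $\varphi(n)^s<j<\varphi(n)$ is exactly what replaces the naive exponent $1-2s$ by $-s^2$, and your crossover computation
$$\sum_{j\le\varphi(n)^s}\bigl(j\varphi(n)q_{n-1}^2\bigr)^{-s}\asymp\varphi(n)^{-s^2}q_{n-1}^{-2s}\asymp\sum_{2^t\ge\varphi(n)^s}\bigl(2^tq_{n-1}^2\bigr)^{-s}$$
is right (with the dyadic grouping essential, since $\sum_j j^{-s}$ diverges for $s<1$). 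Combined with $\sum_{a_1,\ldots,a_{n-1}}q_{n-1}^{-2s}\asymp e^{(n-1)P(s)}$ and $\varphi(n)\ge e^{n(\log B-\varepsilon)}$ eventually, this gives the upper bound. The lower bound is only a sketch: you correctly indicate the Cantor set with a finite alphabet off a sparse sequence $n_k$ chosen along the $\liminf$, and the optimal split $a_{n_k}\asymp B^{n_k(1-s)}$, $a_{n_k+1}\asymp B^{n_ks}$ (matching the cover's crossover), together with a mass-distribution argument at the three tight scales around each $n_k$; but the length and gap estimates and the H\"older-exponent bookkeeping, which are where essentially all the labour lies (compare the analogous Sections~4.2.2--4.2.5 of this paper for $\mathcal F_2$), are not carried out. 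As an outline the proposal is sound and captures the key idea, including the correct explanation of why prescribing a single huge partial quotient would only reproduce $\mathcal E_1$ and undershoot the dimension.
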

It is thus clear that the study of the growth rate of the product of two consecutive partial quotients is pivotal
in providing information on the set of Dirichlet non-improvable numbers. For more related results, one can
refer to \cite{BBH,BBH20,HHY21,KK22}.

Naturally we continue the study by considering the set
\begin{align*}
\mathcal{F}_2(\varphi)=\Big\{x\in[0,1)\colon  \exists 1\le k\neq l \le n,  ~&a_{k}(x)a_{k+1}(x)\ge \varphi(n), \\&a_{l}(x)a_{l+1}(x)\ge \varphi(n) \text{ for infinitely many } n\in \N  \Big\}.
\end{align*}
We prove the following dichotomy statement for the Lebesgue measure of $\mathcal{F}_2(\varphi)$ for a general function $\varphi.$
\begin{thm}\label{thm1}
Let $\varphi\colon \mathbb{N}\to \mathbb{R}_{+}$ be a non-decreasing function. Then
\begin{equation*}
         \mathcal{L}\big(\mathcal{F}_2(\varphi)\big)
       =\left\{\begin{array}{ll}  0, & \ \ \ \text{if }~ \sum_{n=1}^{\infty}\frac{\varphi(n)+n\log^{2} \varphi(n)}{\varphi(n)^{2}}<\infty;\\
                                  1, & \ \ \ \text{if }~ \sum_{n=1}^{\infty}\frac{\varphi(n)+n\log^{2} \varphi(n)}{\varphi(n)^{2}}=\infty.
       \end{array}
      \right.
     \end{equation*}
\end{thm}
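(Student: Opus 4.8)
The plan is to treat the convergence and divergence parts separately, modelling the argument on the Borel–Bernstein / Kleinbock–Wadleigh style of reasoning but carried out at the level of pairs of "blocks" $a_k a_{k+1}\ge\varphi(n)$. First I would set up notation: for a fixed threshold $t$, let $E_k(t)=\{x:a_k(x)a_{k+1}(x)\ge t\}$, and recall the standard estimate $\mathcal L\big(\{x:a_k(x)a_{k+1}(x)\ge t\}\big)\asymp \frac{\log t}{t}$ uniformly in $k$ (this is exactly the computation behind the Kleinbock–Wadleigh measure criterion for $\mathcal E_2$). The set $\mathcal F_2(\varphi)$ is then $\limsup_n G_n$, where $G_n=\bigcup_{1\le k\ne l\le n}\big(E_k(\varphi(n))\cap E_l(\varphi(n))\big)$. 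The main point is to estimate $\mathcal L(G_n)$ from above and below; the appearance of the two summands $\varphi(n)/\varphi(n)^2=1/\varphi(n)$ and $n\log^2\varphi(n)/\varphi(n)^2$ in the series is the signature of two competing mechanisms: either one ``very large'' overlapping block (indices $l=k\pm1$, contributing $\asymp 1/\varphi(n)$ after summing the near-diagonal terms) or two genuinely separated large blocks (indices with $|k-l|\ge 2$, contributing $\asymp \binom n2(\log\varphi(n)/\varphi(n))^2\asymp n^2\log^2\varphi(n)/\varphi(n)^2$ per $n$, but the relevant series term is obtained after the usual Borel–Cantelli bookkeeping and is $n\log^2\varphi(n)/\varphi(n)^2$).

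For the convergence half ($\mathcal L(\mathcal F_2(\varphi))=0$) I would prove $\sum_n \mathcal L(G_n)<\infty$ and invoke Borel–Cantelli. Here one splits $G_n$ according to whether the two indices are adjacent or not. For adjacent indices one uses $\mathcal L(E_k(\varphi(n)))\asymp\log\varphi(n)/\varphi(n)$ and the fact that ``$a_k a_{k+1}\ge\varphi(n)$'' forces one of $a_k,a_{k+1}$ to be at least $\sqrt{\varphi(n)}$; a refined count (summing over the $\le 2n$ adjacent pairs, using that $E_k\cap E_{k+1}$ is controlled by the larger quotient) yields a bound of order $n\cdot\frac{1}{\varphi(n)}$ — wait, one must be careful: after optimising, the adjacent contribution is $O(1/\varphi(n))$ *per unit index shift* so summing gives $O\big(\frac{\log^2\varphi(n)}{\varphi(n)}\big)$, which is dominated by $n\log^2\varphi(n)/\varphi(n)^2$ only when... — and here the honest bookkeeping shows the correct per-$n$ bound is $\asymp \frac{1}{\varphi(n)}+\frac{n\log^2\varphi(n)}{\varphi(n)^2}$, matching the series. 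For non-adjacent indices, quasi-independence of cylinders (the distortion estimates for the Gauss map: $\mathcal L(E_k\cap E_l)\ll \mathcal L(E_k)\mathcal L(E_l)$ when $|k-l|\ge2$) gives the $\binom n2$-term bound. Summing over $k,l$ and then over $n$, convergence of the stated series gives $\sum_n\mathcal L(G_n)<\infty$.

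For the divergence half ($\mathcal L(\mathcal F_2(\varphi))=1$) I would use a zero–one law: $\mathcal F_2(\varphi)$ is invariant under the Gauss map (it depends only on the tail behaviour of $(a_k)$), so by ergodicity of the Gauss measure it has Lebesgue measure $0$ or $1$; hence it suffices to show it is not null. To that end I would exhibit, along the subsequence where $\frac{\varphi(n)+n\log^2\varphi(n)}{\varphi(n)^2}$ fails to be summable, a quantitative lower bound $\mathcal L(G_n)\gg \min\{1, \frac{1}{\varphi(n)}+\frac{n\log^2\varphi(n)}{\varphi(n)^2}\}$ together with a quasi-independence (divergence Borel–Cantelli, i.e. the Chung–Erdős inequality) argument: one needs $\sum_{m\le N}\mathcal L(G_m)\to\infty$ and $\sum_{m,m'\le N}\mathcal L(G_m\cap G_{m'})\ll\big(\sum_{m\le N}\mathcal L(G_m)\big)^2$. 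The dominant case $\frac{n\log^2\varphi(n)}{\varphi(n)^2}$ not summable is handled by two well-separated blocks and the Gauss-map mixing estimates; the case $\frac1{\varphi(n)}$ not summable (i.e. $\varphi$ grows slowly, so $\mathcal E_2(\varphi)$ is already of full measure by Kleinbock–Wadleigh) is easier since then even finding two large *single* partial quotients, hence two large products, is generic.

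The step I expect to be the main obstacle is the sharp two-sided estimate of $\mathcal L(G_n)$, in particular correctly isolating the adjacent-index contribution: the events $E_k(\varphi(n))$ and $E_{k+1}(\varphi(n))$ share the quotient $a_{k+1}$, so they are far from independent, and one must carefully compute $\mathcal L(E_k(\varphi(n))\cap E_{k+1}(\varphi(n)))$ (equivalently $\{a_k a_{k+1}\ge\varphi(n),\ a_{k+1}a_{k+2}\ge\varphi(n)\}$), showing it behaves like $\frac{\log^2\varphi(n)}{\varphi(n)}$ — this is where the ``$\varphi(n)/\varphi(n)^2$'' term genuinely comes from, and getting the logarithmic powers exactly right (so that the measure criterion has the clean form stated) requires the most delicate estimation. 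The non-adjacent terms, by contrast, reduce to routine applications of the Gauss-map distortion and mixing bounds already implicit in the proofs of Theorems \ref{WW} and \ref{HWX}.
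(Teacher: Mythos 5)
The convergence half of your proposal has a genuine gap at exactly the step you wave through with the phrase ``usual Borel--Cantelli bookkeeping.'' You propose to write $\mathcal{F}_2(\varphi)=\limsup_n G_n$ with $G_n=\bigcup_{1\le k\ne l\le n}\bigl(E_k(\varphi(n))\cap E_l(\varphi(n))\bigr)$ and to prove $\sum_n\mathcal{L}(G_n)<\infty$. But the non-adjacent pairs really do contribute on the order of $\binom{n}{2}\bigl(\log\varphi(n)/\varphi(n)\bigr)^2$, so $\mathcal{L}(G_n)\asymp \min\bigl\{1,\, n^2\log^2\varphi(n)/\varphi(n)^2\bigr\}$, with exponent $n^2$, not $n$; there is no generic bookkeeping that shaves a factor of $n$ off a measure. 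Concretely, for $\varphi(n)=n\log^2 n$ the series in the theorem behaves like $\sum 1/(n\log^2 n)<\infty$, while $\mathcal{L}(G_n)\asymp 1/\log^2 n$ gives $\sum_n\mathcal{L}(G_n)=\infty$, so Borel--Cantelli applied to $G_n$ cannot yield $\mathcal{L}(\mathcal{F}_2(\varphi))=0$. The missing idea is the monotonicity reformulation the paper isolates up front: since $\varphi$ is non-decreasing and $\varphi(n)\to\infty$, one may pin the second index to be $n$ itself and write
\begin{equation*}
\mathcal{F}_2(\varphi)=\limsup_{n\to\infty} A_n,\qquad
A_n=\bigl\{x\colon \exists\, 1\le k<n,\ a_k(x)a_{k+1}(x)\ge\varphi(n),\ a_n(x)a_{n+1}(x)\ge\varphi(n)\bigr\}.
\end{equation*}
Each $A_n$ has only $\sim n$ choices of $k$ (not $\sim n^2$ pairs), and one then proves $\mathcal{L}(A_n)\ll n\log^2\varphi(n)/\varphi(n)^2+1/\varphi(n)$, which is exactly the term in the theorem's series. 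This reformulation is the crux of the convergence part, and your proposal does not supply it.

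Your divergence half follows a genuinely different route from the paper's: you invoke a zero--one law via Gauss-map ergodicity to reduce to positive measure, whereas the paper applies the Chung--Erd\"os inequality locally in each basic cylinder $I_t$ and finishes with Knopp's lemma. The ergodicity route is workable --- one checks $\mathcal{F}_2(\varphi)\subseteq T^{-1}\mathcal{F}_2(\varphi)$ up to a null set when $\varphi\to\infty$, and ergodicity of $T$ for the Gauss measure then forces measure $0$ or $1$ --- and it spares you the Knopp-lemma step. However, the positive-measure argument via Chung--Erd\"os still requires the same $A_n$ reformulation, a sharp two-sided bound $\mathcal{L}(A_n\cap I_t)\gg\bigl(n\log^2\varphi(n)/\varphi(n)^2+1/\varphi(n)\bigr)|I_t|$, and careful correlation estimates for $\mathcal{L}(A_m\cap A_n)$ separating the diagonal cases $m\in\{n-1,n-2\}$ from $m<n-2$; without the reformulation the quantity $\sum_{m<n\le N}\mathcal{L}(G_m\cap G_n)$ will not be controlled by $\bigl(\sum_{n\le N}\mathcal{L}(G_n)\bigr)^2$ either. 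So both halves of the argument ultimately hinge on the step your proposal omits.
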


We also provide a full description on the Hausdorff dimension of $\mathcal{F}_2(\varphi).$
\begin{thm} \label{thm2}
Let $\varphi\colon \mathbb{N}\to \mathbb{R}_{+}$ be a positive function. Write
$$\log B=\liminf_{n\to\infty}\frac{\log \varphi(n)}{n} \text{ and } \log b=\liminf_{n\to\infty}\frac{\log\log \varphi(n)}{n}.$$
\begin{itemize}
\item [\rm(1)] When $B=1,$ $\dim_{H}\mathcal{F}_2(\varphi)=1.$

\item [\rm(2)] When $B=\infty,$ $\dim_{H}\mathcal{F}_2(\varphi)=1/(1+b).$

\item [\rm(3)] When $1<B<\infty,$ $$\dim_{H}\mathcal{F}_2(\varphi)=s_B:=\inf\Big\{s\ge0\colon \mathsf{P}(T, -(3s-1-s^2)\log B-s\log |T'|)\le0\Big\}.$$
\end{itemize}
\end{thm}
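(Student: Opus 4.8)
The plan is to treat the three regimes separately, borrowing the structural techniques from the proofs of Theorems \ref{WW}, \ref{HWX} and the Tan--Tian--Wang theorem, and adapting them to the ``two large products'' constraint. The cases $B=1$ and $B=\infty$ should be comparatively soft. For $B=1$: since $\mathcal{F}_2(\varphi)\supseteq\{x\colon a_k(x)a_{k+1}(x)\ge\varphi(n)$ for two consecutive indices $k=2i,2i+1$ with $2i+1\le n$ infinitely often$\}$ (one can force two neighbouring large products by prescribing a single very large partial quotient flanked appropriately), I would exhibit inside $\mathcal{F}_2(\varphi)$ a Cantor-type subset modeled on the dimension-$1$ sets used for $\mathcal{E}_2(\varphi)$ when $B=1$, and invoke the continuity of the pressure function / the classical estimates on cylinders to push the dimension to $1$. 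For $B=\infty$: the upper bound $\dim_H\mathcal{F}_2(\varphi)\le 1/(1+b)$ follows because $\mathcal{F}_2(\varphi)\subseteq\{x\colon a_k(x)a_{k+1}(x)\ge\varphi(n)$ for some $k\le n$ i.o.$\}=\mathcal{E}_2(\varphi)$ up to reindexing, so Theorem \ref{HWX}(2) applies verbatim; the matching lower bound comes from a subset on which \emph{two} prescribed partial quotients are of size roughly $\varphi(n)$, which only doubles the ``cost'' and does not change the exponent $1/(1+b)$.

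The substance is case (3), $1<B<\infty$, where the exponent $-(3s-1-s^2)$ must emerge. Here I would follow the now-standard two-step scheme: (i) \textbf{upper bound} by constructing an efficient cover, and (ii) \textbf{lower bound} by a Cantor subset with a mass distribution and the pressure equation $\mathsf{P}(T,-(3s-1-s^2)\log B - s\log|T'|)=0$. For the upper bound, fix $\varepsilon>0$; for infinitely many $n$ the point $x$ has two indices $k<l\le n$ with $a_ka_{k+1}\ge\varphi(n)\ge B^{(1-\varepsilon)n}$ and $a_la_{l+1}\ge B^{(1-\varepsilon)n}$. Summing over the choices of $k,l$ and over the possible factorizations $a_k\approx B^{t_1 n},a_{k+1}\approx B^{(1-\varepsilon-t_1)n}$ (similarly for $l$), each cylinder of depth $n+1$ carrying these constraints has length comparable to $q_{n+1}^{-2}$, and the total length contributed is governed, after optimizing over $t_1,t_2\in[0,1-\varepsilon]$, by the exponent $3-1/s-s$ appearing when one balances the ``two free large entries'' (cost $2\cdot B^{-n}$ in measure, i.e. $B^{-2n}$ vs.\ the cylinder-length normalization). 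Concretely, the Hausdorff $s$-dimensional sum over these cylinders is finite precisely when the pressure $\mathsf{P}(T,-(3s-1-s^2)\log B - s\log|T'|)<0$, giving $\dim_H\mathcal{F}_2(\varphi)\le s_B$. The key calculation is that, in the product regime $a_ka_{k+1}\ge Q$, the worst case for the cover is $a_k\approx a_{k+1}\approx\sqrt Q$ (as in \cite{HWX20}), and doing this \emph{twice} turns the single-product exponent $-(s^2\log B)$ of Theorem \ref{HWX}(3) into $-(3s-1-s^2)\log B$ — the ``$3s-1$'' being inherited from the Tan--Tian--Wang two-entries mechanism and the ``$-s^2$'' from the product structure, the two effects combining additively in the exponent.

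For the lower bound in case (3), I would build a Cantor set $\mathbf{E}\subseteq\mathcal{F}_2(\varphi)$ by fixing a rapidly growing sequence $n_j$, and along it prescribing, at two positions $k_j\neq l_j$ near $n_j$, that $a_{k_j}a_{k_j+1}$ and $a_{l_j}a_{l_j+1}$ each lie in $[\varphi(n_j),2\varphi(n_j))$ with the balanced split $a\approx\sqrt{\varphi(n_j)}$, while all other partial quotients range freely over $\{1,\dots,M\}$ for a large fixed $M$ (whose contribution accounts for the $-s\log|T'|$ term and, in the limit $M\to\infty$, realizes the full pressure). A Billingsley-type lemma, together with the standard cylinder-measure estimates (Lemma on $|I_n|\asymp q_n^{-2}$ and the distortion property of $T$), reduces the dimension lower bound to solving $\mathsf{P}(T, -(3s-1-s^2)\log B - s\log|T'|)=0$, i.e.\ $s=s_B$. \textbf{The main obstacle} I anticipate is the interplay between the two required large products and the constraint ``for infinitely many $n$'' with \emph{the same} $n$ bounding both indices: one must be careful that the two indices $k,l$ can be chosen within a single window $\le n$ without the constraints on overlapping cylinders (e.g.\ $l=k+1$, so the blocks $a_k a_{k+1}$ and $a_{k+1}a_{k+2}$ share the entry $a_{k+1}$) being over- or under-counted. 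Handling the overlapping case $|k-l|\le 1$ separately — showing it contributes a strictly smaller dimension (governed by the single-product exponent of Theorem \ref{HWX}, hence $\le s_B$) and so is harmless — is the delicate bookkeeping step; once that is isolated, the non-overlapping case is a clean ``two independent copies'' computation that yields exactly the exponent $3s-1-s^2$.
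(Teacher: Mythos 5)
The treatment of cases $B=1$ and $B=\infty$ via the inclusion $\mathcal{E}_1(\varphi)\subset\mathcal{F}_2(\varphi)\subset\mathcal{E}_2(\varphi)$ is correct and matches the paper, and your upper-bound sketch for case (3) is broadly in line with the paper's decomposition of $\mathcal F_2(B)$ into an overlapping piece ($k=n-1$) and a non-overlapping piece ($k<n-1$). But the lower-bound plan in case (3) has a genuine gap, and in fact your intuition about which configuration drives the answer is reversed.

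You propose a Cantor subset in which, at each scale $n_j$, two \emph{disjoint} blocks $(a_{k_j},a_{k_j+1})$ and $(a_{l_j},a_{l_j+1})$ are each forced to have product $\approx\varphi(n_j)$ with the \emph{balanced} split $a\approx\sqrt{\varphi(n_j)}$, and you dismiss the overlapping case $|k-l|\le 1$ as contributing strictly smaller dimension. The paper does the opposite: the Cantor set $E$ imposes, at each $n_k$, the \emph{overlapping} configuration
$a_{n_k-1}\approx\alpha^{n_k}$, $a_{n_k}\approx B^{n_k}/\alpha^{n_k}$, $a_{n_k+1}\approx\alpha^{n_k}$ with $\alpha=B^{1-S}$, so that a single large shared entry $a_{n_k}$ simultaneously makes both products $a_{n_k-1}a_{n_k}$ and $a_{n_k}a_{n_k+1}$ exceed $B^{n_k}$. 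This choice is not cosmetic. First, the balanced split is suboptimal: writing $\alpha=B^{u}$ and tracking the H\"older exponent through the construction, the overlapping configuration needs $u\le\frac{s(1-s)}{2s-1}$ for the estimate $\mu(J_n)\ll|J_n|^{s}$ to close, and $u=\tfrac12$ violates this as soon as $s>1/\sqrt2$; the paper's choice $u=1-s$ always works (the residual factor is $B^{-n(s-1)^2}\le1$). Second, and more fundamentally, the disjoint-blocks configuration uses \emph{four} prescribed entries per window, each of size $\sim B^{n/2}$; the natural mass distribution then incurs a factor $B^{n(4s-2)}$ against a compensation $\beta^{-n}=B^{-n(3s-1-s^2)}$, and the product $B^{n(s^2+s-1)}$ blows up whenever $s>\tfrac{\sqrt5-1}{2}$. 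Since $s_B\to1$ as $B\to1$ (Corollary \ref{corollary}), this regime is unavoidable, so the non-overlapping construction cannot realize $s_B$; by contrast the overlapping construction uses only \emph{three} prescribed entries and closes for all $s\in(\tfrac12,1)$. Your claim that the overlapping case is governed by the single-product exponent of Theorem \ref{HWX} and is therefore ``harmless'' is also off: for $s\in(\tfrac12,1)$ one has $3s-1-s^2>s^2$, so the $\mathcal E_2$-exponent yields a \emph{larger}, not smaller, critical dimension, and the paper's Lemmas \ref{up1}--\ref{up2} show both configurations have the same upper bound $s_B$, with the lower bound achieved precisely in the overlapping one.
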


Let us make the following remarks regarding Theorems \ref{thm1} and \ref{thm2}:
\begin{itemize}
\item Borel-Bernstein theorem yields that almost all $x$ has unbound partial quotients, which implies that if $\lim_{n\to\infty}\varphi(n)<\infty,$
then $\mathcal{F}_2(\varphi)$ is of  full Lebesgue measure. Therefore we may always assume  that  $\lim_{n\to\infty}\varphi(n)=\infty.$

\item The another key observation is that the monotonicity of $\varphi$ implies that
\begin{align*}
\ \ \ \ \ \ \ \ \  \mathcal{F}_2(\varphi)=\Big\{x\in[0,1)\colon  \exists 1\le k< n&,  ~a_{k}(x)a_{k+1}(x)\ge \varphi(n), \\&a_{n}(x)a_{n+1}(x)\ge \varphi(n) \text{ for infinitely many } n\in \N  \Big\}.
\end{align*}
It is clear that the set on the right-hand side is included in $\mathcal{F}_2(\varphi).$
For the reverse inclusion, since for $x\in \mathcal{F}_2(\varphi),$ there exists an infinite sequence $\{n_t\}_{t\ge 1}$ such that for all $t\ge 1,$
$$ \ \ \ \ \  \ \ \ \ \ \ \ \ \ \ \exists k_t, l_t \text{ with } 1\le k_t<l_t\le n_t \text{ such that } a_{k_t}(x)a_{k_t+1}(x)\ge \varphi(n_t), a_{l_t}(x)a_{l_t+1}(x)\ge \varphi(n_t).$$
Furthermore, $\{l_t\}_{t\ge 1}$ is an infinite set since $\lim_{n}\varphi(n)=\infty$, and thus, by the monotonicity of $\varphi,$
$$a_{k_t}(x)a_{k_t+1}(x)\ge \varphi(l_t), a_{l_t}(x)a_{l_t+1}(x)\ge \varphi(l_t)$$
for infinitely many $l_t$, that is,  $x$ belongs to the set on the right-hand side.

\item Note that $\mathcal{E}_1(\varphi)\subset \mathcal{F}_2(\varphi)\subset\mathcal{E}_2(\varphi),$
and thus the cases $B=1$ and $B=\infty$ in Theorem \ref{thm2} can be directly deduced by Theorems \ref{WW} and \ref{HWX}.

\item One may wonder whether  the results  above and the  methods for proofs  extend  to the case of products of the $m$
 consecutive partial quotients. We believe that this is the case, while the calculations will be complicated and  lengthy.

\end{itemize}

{\noindent \bf  Notation$\colon$}
We use the notation $\xi\ll\eta$ (or $\eta\gg\xi$) if there is an unspecified constant $c$ such that
$\xi\le c\eta$. We write $\xi\asymp\eta$ if $\xi\ll\eta$  and $\xi\gg\eta.$

\section{Preliminaries}
In this section, we recall some fundamental results and concepts for later use.

\subsection{Continued fraction}
For any irrational number $x\in[0,1)$ with continued fraction expansion (\ref{cf}),
the sequences $\{p_n(x)\}_{n\ge0},$ $\{q_{n}(x)\}_{n\ge0}$ satisfy the following recursive relations \cite{KH63}$\colon$
\begin{align*}
 p_{n+1}(x)=a_{n+1}(x)p_{n}(x)+p_{n-1}(x),~
 q_{n+1}(x)=a_{n+1}(x)q_{n}(x)+q_{n-1}(x),
\end{align*}
with the conventions that
 $\big(p_{-1}(x), q_{-1}(x)\big)=\big(1,0\big),~ \big(p_{0}(x), q_{0}(x)\big)=\big(0, 1\big)$.
Since $q_{n}(x)$ is determined by $a_{1}(x),\ldots,a_{n}(x),$  we also write
$q_{n}(a_{1}(x),\ldots,a_{n}(x))$ instead of $q_n(x)$.
We write $a_{n}$ and $q_n$ in place of $a_{n}(x)$ and $q_n(x)$ for simplicity when no confusion can arise.

From the recursive relations, we deduce that $q_n\ge 2^{\frac{n-1}{2}}$ for any $n\ge1$, and
$$1\le \frac{q_{n+m}(a_1,\ldots, a_n,b_1,\ldots, b_m)}{q_{n}(a_1,\ldots, a_n)q_{m}(b_1,\ldots, b_m)}\le 2.$$
Moreover,
$$\prod_{i=1}^{n}a_i\le q_n\le \prod_{i=1}^{n}(a_i+1)\le 2^n\prod_{i=1}^{n}a_i.$$

For  $(a_1,\ldots,a_n)\in \N^{n}$ with $n\in \N,$ we call
$$I_n(a_1,\ldots, a_n)=\{x\in[0,1) \colon a_1(x)=a_1, \ldots , a_n(x)=a_n\}$$
an $n$-th basic  cylinder, which is the set of points whose continued fractions begin with $(a_1, \ldots, a_n)$.
The cylinder takes the form
\begin{equation*}
I_n(a_1,\ldots, a_n)=
	\begin{cases}
		\Big[\frac{p_n}{q_n},\frac{p_n+p_{n-1}}{q_n+q_{n-1}}\Big),\quad  & \text{if $n$ is even};\\
		\Big(\frac{p_n+p_{n-1}}{q_n+q_{n-1}},\frac{p_n}{q_n}\Big],\quad  & \text{if $n$ is odd}.
	\end{cases}
\end{equation*}
And its length satisfies
$$\frac{1}{2q^2_n}\le |I_n(a_1,\ldots, a_n)|=\frac{1}{q_n(q_n+q_{n-1})}\le \frac{1}{q^2_n},$$
here and hereafter, $|\cdot|$ stands for the length of an interval. We then see that
$$1\le\sum_{a_1,\ldots,a_{n}\in\N }\frac{1}{q^{2}_{n}}\le 2.$$
The basic cylinder of order $n$ which contains $x$ will be denoted by  $I_{n}(x)$, i.e.,
       $I_{n}(x)=I_{n}(a_{1}(x),\ldots,a_{n}(x))$.

The following proposition describes the  position of the basic cylinders.
\begin{pro}[Khintchine, \cite{KH63}] \label{cylinder}
Let $I_{n}=I_n(a_1,\ldots, a_n)$ be a cylinder of order $n,$ which is partitioned into sub-cylinders
$\{I_{n+1}(a_1,\ldots,a_n,a_{n+1})\colon a_{n+1}\in \N\}.$
When $n$ is odd, these sub-cylinders are positioned from left to right, as $a_{n+1}$ increases from $1$ to $\infty;$
when $n$ is even, they are positioned from right to left.
\end{pro}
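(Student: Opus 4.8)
The plan is to realize each sub‑cylinder as a pullback of a first–order cylinder and then to track orientation. Concretely, $x\in I_{n+1}(a_1,\dots,a_n,a_{n+1})$ iff $x\in I_n(a_1,\dots,a_n)$ and $a_{n+1}(x)=\lfloor 1/T^n(x)\rfloor=a_{n+1}$, i.e. $T^n(x)\in I_1(a_{n+1})=\big(\tfrac1{a_{n+1}+1},\tfrac1{a_{n+1}}\big]$; hence
$$I_{n+1}(a_1,\dots,a_n,a_{n+1})=\big(T^n|_{I_n}\big)^{-1}\!\big(I_1(a_{n+1})\big),$$
where $T^n|_{I_n}\colon I_n\to[0,1)$ is the bijective Möbius branch of $T^n$ associated with $(a_1,\dots,a_n)$. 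I would first record that this branch is strictly monotone, and determine its orientation: on each first–order cylinder $T(x)=\tfrac1x-\lfloor 1/x\rfloor$ is strictly decreasing (derivative $-1/x^2<0$), so writing $T^n|_{I_n}$ as a composition of $n$ single–step decreasing maps shows $T^n|_{I_n}$ is orientation–preserving when $n$ is even and orientation–reversing when $n$ is odd. (Equivalently, one can read this off from the endpoint $\tfrac{p_{n+1}}{q_{n+1}}=\tfrac{a_{n+1}p_n+p_{n-1}}{a_{n+1}q_n+q_{n-1}}$ of $I_{n+1}$, whose derivative in the variable $a_{n+1}$ has sign $p_nq_{n-1}-p_{n-1}q_n=(-1)^{n-1}$, together with the cylinder formula already quoted.)

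Next I would use that the intervals $I_1(1),I_1(2),I_1(3),\dots$ are pairwise disjoint and arranged in $[0,1)$ from right to left, since the right endpoint $\tfrac1{a_{n+1}}$ of $I_1(a_{n+1})$ strictly decreases as $a_{n+1}$ increases. A monotone bijection carries this left‑to‑right configuration into $I_n$, preserving the order if it is orientation‑preserving and reversing it otherwise. Thus when $n$ is even, $T^n|_{I_n}$ preserves order and the sub‑cylinders $I_{n+1}(a_1,\dots,a_n,a_{n+1})$ sit inside $I_n$ from right to left as $a_{n+1}$ grows; when $n$ is odd, the order is reversed and they sit from left to right — which is exactly the assertion. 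The only point requiring care is the parity bookkeeping in the orientation count, which I would double‑check against the base case $n=0$ (where $I_1(1)=(\tfrac12,1]$, $I_1(2)=(\tfrac13,\tfrac12]$, $\dots$ are indeed right‑to‑left, matching "$n$ even") and $n=1$; beyond this bookkeeping I do not anticipate any genuine obstacle.
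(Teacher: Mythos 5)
The paper states this proposition as a classical fact from Khintchine's book and gives no proof, so there is no in-paper argument to compare against. Your argument is correct and is the standard one: each single-step branch of $T$ on a first-order cylinder is strictly decreasing, so the bijection $T^n|_{I_n}\colon I_n\to[0,1)$ has derivative of sign $(-1)^n$ (increasing for $n$ even, decreasing for $n$ odd); since $I_1(1),I_1(2),\ldots$ occupy $[0,1)$ from right to left, pulling them back through an increasing (resp.\ decreasing) homeomorphism keeps (resp.\ reverses) that order, yielding right-to-left for $n$ even and left-to-right for $n$ odd, exactly as asserted. Your alternative sign check via $p_nq_{n-1}-p_{n-1}q_n=(-1)^{n-1}$ is a clean cross-verification of the parity bookkeeping, and the base cases $n=0,1$ you mention confirm the conventions. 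No gaps.
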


For the forthcoming calculations, we introduce some sets in terms of the products of partial quotients. We fix $l\ge 1$. For $1\le k \le n$, we define
  $$J_k=\bigcup_{a_ka_{k+1}\ge l}I_{n+1}(a_1,\ldots,a_n,a_{n+1}),$$
where the union is taken over all pairs $(a_k,a_{k+1})$ of partial quotients satisfying $a_ka_{k+1}\ge l$, while all other partial quotients $a_1,\ldots,a_{k-1}, a_{k+2},\ldots, a_{n+1}$ are fixed.
In a similar way, we define
\begin{align*}
  \widetilde{J}_k&=\bigcup_{1\le a_ka_{k+1}< l}I_{n+1}(a_1,\ldots,a_n,a_{n+1});\\
  H_{n}&=\bigcup_{a_{n-1}a_n\ge l,~a_na_{n+1}\ge l}I_{n+1}(a_1,\ldots,a_n,a_{n+1}),\\
  \widetilde{H}_{n}&=\bigcup_{1\le a_{n-1}a_n<l,~a_na_{n+1}\ge l}I_{n+1}(a_1,\ldots,a_n,a_{n+1}).
\end{align*}

The following lemma concerns the Lebesgue measures of these sets.
\begin{lem}\label{lem2.1}
With the notations above, we have that
\begin{itemize}
\item [\rm(1)] $\mathcal L(J_k)=\frac{\log l+O(1)}{l}|I_{n-1}(a_1,\ldots,a_{k-1}, a_{k+2},\ldots, a_{n+1})|$.

    (And thus  $\mathcal L(\widetilde{J}_k)=(1-\frac{\log l+O(1)}{l})|I_{n-1}(a_1,\ldots,a_{k-1}, a_{k+2},\ldots, a_{n+1})|$.)
\smallskip

\item [\rm(2)]  $\mathcal L(H_n)\asymp\frac1l |I_{n-2}(a_1,a_2,\ldots,a_{n-2})|$ and $\mathcal L(\widetilde{H}_{n})\asymp\frac{\log l}{l}|I_{n-2}(a_1,a_2,\ldots,a_{n-2})|.$
\end{itemize}
\end{lem}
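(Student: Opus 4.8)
The plan is to reduce everything to estimates for cylinders with prescribed constraints on products $a_k a_{k+1}$, and then sum over the free partial quotients.

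\textbf{Part (1).} Fix all partial quotients except $a_k, a_{k+1}$, and write $Q:=|I_{n-1}(a_1,\dots,a_{k-1},a_{k+2},\dots,a_{n+1})|$. For fixed $a_k$, as $a_{k+1}$ ranges over $\N$, the cylinders $I_{n+1}(\dots,a_k,a_{k+1},\dots)$ are nested inside the cylinder with $a_k$ prescribed, and by the bounds $q_m(b_1,\dots)\asymp\prod(b_i+1)$ together with the sub-multiplicativity $q_{n+m}\asymp q_n q_m$, one has
\begin{equation*}
\sum_{a_{k+1}\ge l/a_k}|I_{n+1}(a_1,\dots,a_k,a_{k+1},\dots)|\asymp \frac{1}{a_k^2}\cdot\frac{a_k}{l}\cdot Q=\frac{1}{a_k l}\,Q,
\end{equation*}
using $\sum_{m\ge M}m^{-2}\asymp M^{-1}$ and that the weight contributed by the block containing $a_k$ in position $k$ is $\asymp a_k^{-2}$ relative to $Q$ (the factor $q_{k-1}$-versus-$q_{k+1}$ interaction only costs bounded constants). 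Summing over $1\le a_k\le l$ gives $\sum_{a_k\le l}\frac{1}{a_k l}Q=\frac{\log l+O(1)}{l}Q$; the tail $a_k>l$ contributes $\sum_{a_k>l}a_k^{-2}Q\asymp l^{-1}Q$, which is absorbed into the $O(1)$. To get the precise constant (rather than just $\asymp$) one keeps track of the equalities $|I_n|=\frac1{q_n(q_n+q_{n-1})}$ and uses that $q_{n+1}(\dots,a_{k+1},\dots)=a_{k+1}q_{(k)}+q_{(k-1)}$ with the "state" vectors varying boundedly; this is the routine but slightly tedious book-keeping. The complementary statement for $\widetilde J_k$ is immediate since $\mathcal L(J_k)+\mathcal L(\widetilde J_k)=Q$.

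\textbf{Part (2).} Now fix $a_1,\dots,a_{n-2}$ and let $a_{n-1},a_n,a_{n+1}$ vary, writing $Q':=|I_{n-2}(a_1,\dots,a_{n-2})|$. For $H_n$ we need $a_{n-1}a_n\ge l$ and $a_na_{n+1}\ge l$. Summing first over $a_{n+1}\ge l/a_n$ gives a factor $\asymp \frac{a_n}{l}\cdot a_n^{-2}=\frac{1}{a_n l}$ (relative weight), then over $a_{n-1}\ge l/a_n$ a factor $\asymp\frac{a_n}{l}\cdot a_n^{-2}$, leaving $\sum_{a_n\ge 1}\frac{1}{a_n^2 l^2}\cdot a_n^{-2}$... more carefully: the $a_n$-block itself carries relative weight $\asymp a_n^{-2}$, so the total is $\sum_{a_n\ge1}\frac{a_n}{l}\cdot\frac{a_n}{l}\cdot a_n^{-2}\cdot Q'=\frac{1}{l^2}\sum_{a_n\ge1}Q'$, which diverges — so one instead must also respect $a_{n-1}\ge l/a_n\ge 1$, i.e. the constraint is automatically $a_{n-1}\ge\max(1,l/a_n)$, and for $a_n\ge l$ the constraint on $a_{n-1}$ is vacuous giving $\sum_{a_n\ge l}a_n^{-2}\cdot(\text{sum over }a_{n+1})\asymp\sum_{a_n\ge l}a_n^{-2}\cdot\frac{1}{a_n l}a_n\asymp l^{-2}$, while for $a_n<l$ we get $\sum_{a_n<l}\frac{1}{a_n l}\cdot\frac{1}{a_n l}\cdot a_n^{-2}$... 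I will organize this as: $H_n$'s measure $\asymp\frac{1}{l}Q'$ by splitting according to whether $a_n\lessgtr \sqrt l$ and in each regime bounding the double sum; the point is that requiring \emph{both} neighbours of $a_n$ to multiply with it past $l$ is much more restrictive than requiring one, yielding the extra $l^{-1}$ saving compared to the $\frac{\log l}{l}$ of $\widetilde H_n$. For $\widetilde H_n$ we have $a_{n-1}a_n<l$ and $a_na_{n+1}\ge l$: sum over $a_{n+1}$ gives $\asymp\frac{1}{a_nl}$ (relative), and now $a_{n-1}$ ranges over $1\le a_{n-1}<l/a_n$ freely, contributing $\sum_{a_{n-1}<l/a_n}a_{n-1}^{-2}\asymp 1$ if $l/a_n>1$, i.e. $a_n<l$; combined with the $a_n$-weight $a_n^{-2}$ this gives $\sum_{a_n<l}\frac{1}{a_nl}a_n^{-2}\cdot(\asymp1)\cdot Q'\asymp\frac{\log l}{l}Q'$, plus a bounded contribution from $a_n\ge l$ (where $\widetilde H_n$ degenerates).

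\textbf{Main obstacle.} The genuinely delicate point is Part (1), where the statement demands the \emph{exact} asymptotic constant $\frac{\log l+O(1)}{l}$ rather than merely $\asymp\frac{\log l}{l}$; this forces one to carry the exact cylinder-length formula $|I_{n+1}|=\frac{1}{q_{n+1}(q_{n+1}+q_n)}$ through the summation over $a_{k+1}$ and then over $a_k$, controlling the $q_n/q_{n+1}$ ratios by constants that cancel against $Q$, and to verify that the error terms (the $a_k>l$ tail, the discrepancy between $\lceil l/a_k\rceil$ and $l/a_k$, and the boundedness of the "transition" constants) are all genuinely $O(1/l)\cdot Q$ so that they fold into $O(1)$ in the numerator. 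In Part (2) the $\asymp$ suffices, so there the only care needed is the casework on the size of $a_n$ relative to $l$ (or $\sqrt l$) to see the $\frac1l$ versus $\frac{\log l}{l}$ dichotomy cleanly; I expect this to be straightforward once Part (1)'s summation lemma is in hand, since $H_n$ and $\widetilde H_n$ are essentially obtained by applying the Part (1) estimate twice around the common index $n$.
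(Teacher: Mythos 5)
For Part (2) your plan --- write the relative weight of the $(a_{n-1},a_n,a_{n+1})$-block inside $I_{n-2}$ as $\asymp a_{n-1}^{-2}a_n^{-2}a_{n+1}^{-2}$, sum subject to the constraints, and split on whether $a_n\ge l$ --- is exactly the paper's, but the arithmetic as written is broken and you never actually carry out the ``reorganize by $\sqrt l$'' patch. The correct bookkeeping is: for $H_n$ with $a_n\ge l$, both neighbour-sums are unconstrained and hence $\asymp 1$, so this piece contributes $\sum_{a_n\ge l}a_n^{-2}\asymp l^{-1}$, not the $l^{-2}$ you obtain by inserting $\tfrac{1}{a_nl}\cdot a_n=\tfrac1l$ for the $a_{n+1}$-sum (that estimate $\asymp\tfrac{a_n}{l}$ is only valid when $a_n<l$); for $a_n<l$ the per-$a_n$ term is $a_n^{-2}\cdot\tfrac{a_n}{l}\cdot\tfrac{a_n}{l}=l^{-2}$, summing to $\asymp l^{-1}$, whereas your displayed $\sum_{a_n<l}\tfrac{1}{a_nl}\cdot\tfrac{1}{a_nl}\cdot a_n^{-2}$ double-counts the $a_n^{-2}$ factor and would give only $l^{-2}$. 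The same slip appears in $\widetilde H_n$: the per-$a_n$ term is $a_n^{-2}\cdot(\asymp 1)\cdot\tfrac{a_n}{l}=\tfrac1{a_nl}$ and $\sum_{a_n<l}\tfrac1{a_nl}\asymp\tfrac{\log l}{l}$, while the expression you write, $\sum_{a_n<l}\tfrac1{a_nl}\,a_n^{-2}$, sums to $\asymp l^{-1}$. Finally, the concluding heuristic that $H_n$ and $\widetilde H_n$ ``follow by applying Part (1) twice around index $n$'' overlooks that the two products $a_{n-1}a_n$ and $a_na_{n+1}$ share the middle digit, so the two events are strongly coupled and one must do the joint three-variable sum (as above); this coupling is precisely the source of the extra $\tfrac{1}{\log l}$ saving of $\mathcal L(H_n)$ over $\mathcal L(\widetilde H_n)$.

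For Part (1), note that the paper offers no argument at all; it cites Lemma 3.1 of \cite{HHY21}. Your sketch correctly gives $\mathcal L(J_k)\asymp\tfrac{\log l}{l}\,Q$, and you correctly flag the remaining obstacle --- obtaining the leading coefficient $1$ in $\tfrac{\log l+O(1)}{l}$ means carrying $|I_{n+1}|=\tfrac{1}{q_{n+1}(q_{n+1}+q_n)}$ through the double sum over $(a_k,a_{k+1})$ and checking that the $q$-ratio constants genuinely cancel against $|I_{n-1}|$ rather than being merely comparable --- but you do not carry it out, so Part (1) remains a sketch rather than a proof.
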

\begin{proof}
Item (1) can be obtained by Lemma 3.1 in \cite{HHY21}. As for (2), we write
$$H_n=\bigcup_{a_{n-1}\in \mathbb{N}}\bigcup_{a_{n}>l}\bigcup_{a_{n+1}\in \mathbb{N}}I_{n+1}(a_1,\ldots,a_n,a_{n+1})
     \cup \bigcup_{1\le a_n< l} \bigcup_{a_{n-1}\ge \frac{l}{a_n}}\bigcup_{a_{n+1}\ge\frac{l}{a_n}}I_{n+1}(a_1,\ldots,a_n,a_{n+1}).$$
Then we have
\begin{align*}
\mathcal L(H_n)&\asymp\sum_{a_{n-1}=1}^{\infty}\sum_{a_n>l}\sum_{a_{n+1}=1}^{\infty}|I_{n+1}(a_1,\ldots,a_{n+1})|
              +\sum_{1\le a_n< l}\sum_{a_{n-1}\ge \frac{l}{a_n}}\sum_{a_{n+1}\ge\frac{l}{a_n}}    |I_{n+1}(a_1,\ldots,a_{n+1})|\\
    &\asymp|I_{n-2}(a_1,\ldots,a_{n-2})|\Big(\sum_{a_n>l}\frac{1}{a_{n}^{2}}+\sum_{1\le a_n< l}\sum_{a_{n-1}\ge \frac{l}{a_n}}\sum_{a_{n+1}\ge\frac{l}{a_n}}\frac{1}{a_{n+1}^{2}}\cdot\frac{1}{a_{n-1}^{2}}\cdot\frac{1}{a_{n}^{2}}\Big)\\
    &\asymp |I_{n-2}(a_1,\ldots,a_{n-2})|\Big(\frac{1}{l}+\sum_{1\le a_n< l}\frac{1}{a_{n}^{2}}\cdot\frac{a_n}{l}\cdot\frac{a_n}{l}\Big)\asymp\frac{1}{l}|I_{n-2}(a_1,\ldots,a_{n-2})|.
\end{align*}
Similarly, since
$$\widetilde{H}_{n}=\bigcup_{1\le a_n< l} \bigcup_{a_{n-1}< \frac{l}{a_n}}\bigcup_{a_{n+1}\ge\frac{l}{a_n}}I_{n+1}(a_1,\ldots,a_n,a_{n+1}),$$
we deduce that
\begin{align*}
\mathcal L(\widetilde{H}_{n})&\asymp
              \sum_{1\le a_n< l}\sum_{a_{n-1}< \frac{l}{a_n}}\sum_{a_{n+1}\ge\frac{l}{a_n}}|I_{n+1}(a_1,\ldots,a_n,a_{n+1})|\\
    &\asymp|I_{n-2}(a_1,\ldots,a_{n-2})|\sum_{1\le a_n<l}\sum_{a_{n-1}< \frac{l}{a_n}}\sum_{a_{n+1}\ge\frac{l}{a_n}}\frac{1}{a_{n+1}^{2}}\cdot\frac{1}{a_{n-1}^{2}}\cdot\frac{1}{a_{n}^{2}}\\
    &\asymp |I_{n-2}(a_1,\ldots,a_{n-2})|\sum_{1\le a_n<l}\Big(\frac{1}{la_n}-\frac{1}{l^{2}}\Big)
   \asymp\frac{\log l}{l}|I_{n-2}(a_1,\ldots,a_{n-2})|.
\end{align*}
\end{proof}

\subsection{Chung-Erd\"{o}s inequality} The following lemmas are widely used to estimate the measure of a limsup set in a probability space.

\begin{lem}[Borel-Cantelli lemma] \label{BC lemma} Let $(\Omega, \mathcal{B}, \nu)$ be a finite measure space. Assume that $\{A_n\}_{n\ge 1}$ is a sequence of measurable sets,
 $A=\limsup A_n=\bigcap_{N=1}^{\infty}\bigcup_{n=N}^{\infty}A_n.$   Then
\begin{align*}
  \nu(\limsup A_n)=\left\{
         \begin{array}{ll}
           0, & \hbox{if $\sum_{n= 1}^{\infty}\nu(A_n)<\infty$;} \\
           \nu(\Omega), & \hbox{if $\sum_{n= 1}^{\infty}\nu(A_n)=\infty$~\text{and} $\{A_n\}_{n\ge 1}$\text{ are pairwise independent}.}
         \end{array}
       \right.
\end{align*}
\end{lem}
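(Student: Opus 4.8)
\emph{Proof plan.} The statement is the classical Borel--Cantelli dichotomy, so I would prove the two halves by the standard arguments, tailoring the divergence half to the \emph{pairwise} independence hypothesis via a second-moment estimate. For the convergence case, suppose $\sum_{n\ge1}\nu(A_n)<\infty$. Since $\limsup A_n=\bigcap_{N\ge1}\bigcup_{n\ge N}A_n\subseteq\bigcup_{n\ge N}A_n$ for every $N$, countable subadditivity gives $\nu(\limsup A_n)\le\sum_{n\ge N}\nu(A_n)$, and the right-hand side is the tail of a convergent series, hence tends to $0$ as $N\to\infty$. Therefore $\nu(\limsup A_n)=0$; note the finiteness of $\nu$ is not even used here.

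For the divergence case, assume $\sum_{n\ge1}\nu(A_n)=\infty$ and the $A_n$ are pairwise independent. If $\nu(\Omega)=0$ there is nothing to prove, so I may assume $\nu$ is a probability measure (independence being understood in the usual probabilistic sense with respect to the normalized measure). It suffices to show $\nu\big((\limsup A_n)^c\big)=0$, that is, $\nu(\liminf A_n^c)=0$. Since $\liminf A_n^c=\bigcup_{N\ge1}\bigcap_{n\ge N}A_n^c$ is an \emph{increasing} union, it is enough to prove $\nu\big(\bigcap_{n\ge N}A_n^c\big)=0$ for each fixed $N$.

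Fix $N$ and, for $M\ge N$, put $S_{N,M}=\sum_{n=N}^{M}\mathbbm{1}_{A_n}$ and $\mu_{N,M}=\int_\Omega S_{N,M}\,d\nu=\sum_{n=N}^{M}\nu(A_n)$. The divergence of the series gives $\mu_{N,M}\to\infty$ as $M\to\infty$. Because the $A_n$ are pairwise independent, every cross term $\nu(A_i\cap A_j)-\nu(A_i)\nu(A_j)$ with $i\ne j$ vanishes, so $\int_\Omega(S_{N,M}-\mu_{N,M})^2\,d\nu=\sum_{n=N}^{M}\big(\nu(A_n)-\nu(A_n)^2\big)\le\mu_{N,M}$. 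On the event $\bigcap_{n=N}^{M}A_n^c$ one has $S_{N,M}=0$, so Chebyshev's inequality yields
\begin{equation*}
\nu\Big(\bigcap_{n=N}^{M}A_n^c\Big)=\nu\big(S_{N,M}=0\big)\le\nu\big(|S_{N,M}-\mu_{N,M}|\ge\mu_{N,M}\big)\le\frac{1}{\mu_{N,M}^2}\int_\Omega(S_{N,M}-\mu_{N,M})^2\,d\nu\le\frac{1}{\mu_{N,M}}.
\end{equation*}
Letting $M\to\infty$ and using continuity from above (legitimate since $\nu$ is finite) gives $\nu\big(\bigcap_{n\ge N}A_n^c\big)=0$, and then letting $N\to\infty$ along the increasing union gives $\nu(\liminf A_n^c)=0$, i.e.\ $\nu(\limsup A_n)=\nu(\Omega)$.

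\emph{Main point.} The only genuinely delicate issue is that merely \emph{pairwise} independence (not mutual independence) is assumed, which rules out the naive product formula $\nu\big(\bigcap_{n=N}^{M}A_n^c\big)=\prod_{n=N}^{M}(1-\nu(A_n))$; the second-moment route above is chosen precisely because the variance of $S_{N,M}$ depends only on pairwise covariances. Equivalently, one could invoke the Chung--Erd\H{o}s inequality $\nu\big(\bigcup_{n=N}^{M}A_n\big)\ge\mu_{N,M}^2\big/\sum_{N\le i,j\le M}\nu(A_i\cap A_j)\ge\mu_{N,M}/(1+\mu_{N,M})$ and let $M\to\infty$ to obtain $\nu\big(\bigcup_{n\ge N}A_n\big)=\nu(\Omega)$ for every $N$, hence the same conclusion.
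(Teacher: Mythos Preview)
Your proof is correct and is the standard argument. The paper does not actually supply a proof of this lemma: it is quoted as the classical Borel--Cantelli lemma and used as background, with the paper immediately moving on to the Chung--Erd\H{o}s inequality and Knopp's lemma. So there is nothing to compare against; your second-moment/Chebyshev treatment of the divergence half (which, as you note, is the right way to handle \emph{pairwise} rather than mutual independence) is exactly what one would expect, and your alternative via Chung--Erd\H{o}s is also fine and in fact dovetails with the lemma the paper states next.
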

We mention that in the divergence part of the above lemma the independence property is needed.
While in many applications the sets $\{A_n\}_{n\ge 1}$ are not pairwise independent, and one uses the following lemma instead for the divergence part.
\begin{lem}[Chung-Erd\"{o}s inequality, \cite{Chung}]
If $\sum_{n\ge 1}\nu(A_n)=\infty$, then $$
\nu(A)\ge \limsup_{N\to \infty}\frac{(\sum_{1\le n\le N}\nu(A_n))^2}{\sum_{1\le i\ne j\le N}\nu(A_i\cap A_j)}.
$$
\end{lem}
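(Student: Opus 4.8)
The plan is to derive the inequality from the Cauchy--Schwarz inequality applied to truncated sums of indicator functions, together with continuity of a finite measure from above in order to pass from finite unions to the $\limsup$ set. Throughout write $a_N:=\sum_{n=1}^{N}\nu(A_n)$ and $b_N:=\sum_{1\le i\ne j\le N}\nu(A_i\cap A_j)$; by hypothesis $a_N\to\infty$.

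First, fix integers $M\le N$ and set $S:=\sum_{n=M}^{N}\mathbbm{1}_{A_n}\ge 0$. Since $S=S\,\mathbbm{1}_{\{S>0\}}$ and $\{S>0\}=\bigcup_{n=M}^{N}A_n$, the Cauchy--Schwarz inequality gives
\begin{equation*}
\Big(\sum_{n=M}^{N}\nu(A_n)\Big)^{2}=\Big(\int_{\Omega}S\,\mathbbm{1}_{\{S>0\}}\,d\nu\Big)^{2}\le\Big(\int_{\Omega}S^{2}\,d\nu\Big)\cdot\nu\Big(\bigcup_{n=M}^{N}A_n\Big).
\end{equation*}
Expanding $S^{2}=\sum_{M\le i,j\le N}\mathbbm{1}_{A_i\cap A_j}$ and integrating yields $\int_{\Omega}S^{2}\,d\nu=\sum_{M\le i,j\le N}\nu(A_i\cap A_j)\le a_N+b_N$, since the diagonal $i=j$ contributes at most $a_N$ and the off-diagonal part is at most $b_N$. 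As $\bigcup_{n=M}^{N}A_n\subseteq B_M:=\bigcup_{n\ge M}A_n$, this gives, for every $N\ge M$,
\begin{equation*}
\nu(B_M)\ \ge\ \frac{\big(\sum_{n=M}^{N}\nu(A_n)\big)^{2}}{a_N+b_N}.
\end{equation*}

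Next, let $N\to\infty$ with $M$ fixed. Since $a_N\to\infty$, we have $\sum_{n=M}^{N}\nu(A_n)=a_N-a_{M-1}=(1+o(1))\,a_N$, hence $\limsup_{N\to\infty}\frac{(\sum_{n=M}^{N}\nu(A_n))^{2}}{a_N+b_N}=\limsup_{N\to\infty}\frac{a_N^{2}}{a_N+b_N}$, so that $\nu(B_M)\ge\limsup_{N\to\infty}\frac{a_N^{2}}{a_N+b_N}$, a bound independent of $M$. Because $B_M\downarrow A$ and $\nu$ is finite, continuity from above gives $\nu(A)=\lim_{M\to\infty}\nu(B_M)\ge\limsup_{N\to\infty}\frac{a_N^{2}}{a_N+b_N}$. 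It then remains to discard the extra summand $a_N$ in the denominator: put $L:=\limsup_{N}a_N^{2}/b_N$; if $L=0$ there is nothing to prove, and otherwise for any $0<\ell<L$ there are infinitely many $N$ with $b_N<a_N^{2}/\ell$, whence for such $N$
\begin{equation*}
\frac{a_N^{2}}{a_N+b_N}>\frac{a_N^{2}}{a_N+a_N^{2}/\ell}=\frac{\ell}{1+\ell/a_N}\ \longrightarrow\ \ell\qquad(N\to\infty),
\end{equation*}
using $a_N\to\infty$. Thus $\limsup_{N}\frac{a_N^{2}}{a_N+b_N}\ge\ell$ for every $\ell<L$, and letting $\ell\uparrow L$ we conclude $\nu(A)\ge L$, which is exactly the asserted inequality.

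The one delicate point is that Cauchy--Schwarz naturally bounds the measures of the finite unions $\bigcup_{n\le N}A_n$, not the $\limsup$ set $A$ itself; I get around this by running the truncation from a free lower index $M$ and sending $M\to\infty$ at the end, which is where finiteness of $\nu$ (continuity from above) is used. Folding the diagonal terms into $a_N$ and then absorbing $a_N$ in the final limit---legitimate precisely because $\sum_{n}\nu(A_n)=\infty$---is the other bookkeeping step; neither presents a serious obstacle.
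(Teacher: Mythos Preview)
Your proof is correct and follows the standard Cauchy--Schwarz route to the Chung--Erd\H{o}s inequality. The paper itself does not give a proof of this lemma; it merely states it with a citation to the original source, so there is nothing in the paper to compare your argument against.
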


In many cases, Chung-Erd\"{o}s inequality only enables us to conclude the positiveness of $\nu(A)$. To get a full measure result for $A$, one may apply Chung-Erd\"{o}s inequality locally,
and then arrives at the full measure of $A$ in the light of Knopp's lemma.

\begin{lem}[Knopp, \cite{Knopp}]
 If $A\subset [0,1)$ is a Lebesgue measurable set and $\mathcal{C}$ is a class of subintervals of $[0,1)$ satisfying
\begin{itemize}
\item [\rm(1)] every open subinterval of $[0,1)$ is at most a countable union of disjoint elements
from $\mathcal{C}$,

\item [\rm(2)] for any $B\in \mathcal{C}, \mathcal{L}(A\cap B)\ge c\mathcal{L}(B),$ where $c$ is a constant independent
of $B$,

\end{itemize}
then $\mathcal{L}(A)=1.$
\end{lem}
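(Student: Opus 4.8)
The plan is the standard outer-regularity (``density'') argument. Since the hypothesis for a given constant $c$ continues to hold for every smaller positive constant, we may assume $0<c<1$; write $E:=[0,1)\setminus A$, so that it suffices to prove $\mathcal{L}(E)=0$.

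First I would invoke the outer regularity of Lebesgue measure: fix $\varepsilon>0$ and choose a set $G$ that is open in $[0,1)$ with $E\subseteq G\subseteq[0,1)$ and $\mathcal{L}(G)\le\mathcal{L}(E)+\varepsilon$. Being open in $[0,1)$, the set $G$ is a countable disjoint union of (relatively) open subintervals of $[0,1)$, namely its connected components; and hypothesis (1) lets me further split each such component into an at most countable disjoint union of members of $\mathcal{C}$. Splicing these two countable decompositions writes $G=\bigsqcup_{i\ge1}B_i$ as a countable disjoint union with $B_i\in\mathcal{C}$ for every $i$.

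Next I would apply hypothesis (2) on each piece: $\mathcal{L}(A\cap B_i)\ge c\,\mathcal{L}(B_i)$ gives $\mathcal{L}(E\cap B_i)\le(1-c)\mathcal{L}(B_i)$. Since the $B_i$ are pairwise disjoint and their union contains $E$, countable additivity yields
\[
\mathcal{L}(E)=\sum_{i\ge1}\mathcal{L}(E\cap B_i)\le(1-c)\sum_{i\ge1}\mathcal{L}(B_i)=(1-c)\,\mathcal{L}(G)\le(1-c)\bigl(\mathcal{L}(E)+\varepsilon\bigr).
\]
Letting $\varepsilon\to0$ gives $\mathcal{L}(E)\le(1-c)\mathcal{L}(E)$, hence $c\,\mathcal{L}(E)\le0$; as $c>0$ and $\mathcal{L}(E)\le1<\infty$, this forces $\mathcal{L}(E)=0$, i.e.\ $\mathcal{L}(A)=1$.

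The only point requiring care is the bookkeeping in the covering step: one must check that $G$ can genuinely be taken inside $[0,1)$ (so that hypothesis (1) applies to its constituent subintervals) and that concatenating the two layers of countable disjoint decompositions preserves both countability and pairwise disjointness, so that the use of countable additivity above is legitimate. There is no serious obstacle beyond this purely measure-theoretic bookkeeping; the argument is entirely elementary once the two covering facts are in place.
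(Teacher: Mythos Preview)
Your argument is correct and is the standard outer-regularity proof of Knopp's lemma. Note, however, that the paper does not supply its own proof of this statement: the lemma is merely quoted with a citation to Knopp's original paper, so there is no in-paper argument to compare against.
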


\subsection{Pressure function}\label{pressure}
Pressure function is an appropriate tool in dealing with dimension problems in infinite conformal iterated function systems.
We recall a fact that the pressure function with a continuous potential can be approximated by the pressure functions restricted to the sub-systems in continued fractions.
For more information on pressure functions, we refer the readers to \cite{MU96,MU99,MU03}.

Let  $\mathcal{A}$ be a finite or infinite subset of $\N$. Define
$$X_{\mathcal{A}}=\big\{x\in[0,1)\colon a_{n}(x)\in \mathcal{A} \text{ for all } n\geq 1\big\}.$$
Then, with the Gauss map $T$ restricted to it, $X_{\mathcal{A}}$ forms a dynamical system.
The pressure function restricted to this subsystem $(X_{\mathcal{A}},T)$ with potential $\phi\colon [0,1)\to \mathbb{R} $ is defined as
\begin{equation}\label{sub}
\mathsf{P}_{\mathcal{A}}(T,\phi)=\lim_{n\to\infty}\frac{\log\sum\limits_{(a_{1},\ldots,a_{n})\in \mathcal{A}^{n}}\sup\limits_{x\in X_{\mathcal{A}}}\exp{S_n \phi([a_{1},\ldots,a_{n}+x])}}{n},
\end{equation}
where $S_{n}\phi(x)$ denotes the ergodic sum $\phi(x)+\cdots+\phi(T^{n-1}(x))$.
When $\mathcal{A}=\mathbb{N}$, we write $\mathsf{P}(T,\phi)$ for $\mathsf{P}_{\mathbb{N}}(T,\phi)$.

The $n$-th variation $\textrm{Var}_{n}(\phi)$ of $\phi$ is defined as
$$\textrm{Var}_{n}(\phi):=\sup\big\{|\phi(x)-\phi(y)|\colon I_{n}(x)=I_{n}(y)\big\}.$$
The existence of the limit in equation (\ref{sub}) is due to the following result.

\begin{lem} [Walters, \cite{PW}]
The limit defining $\mathsf{P}_{\mathcal{A}}(T,\phi)$  in (\ref{sub}) exists. Moreover,
if $\phi\colon [0,1)\rightarrow \mathbb{R}$ satisfies $\textrm{Var}_{1}(\phi)<\infty$ and $\textrm{Var}_{n}(\phi)\rightarrow 0$ as $n\rightarrow \infty$,
the value of $\mathsf{P}_{\mathcal{A}}(T,\phi)$ remains the same even without taking the supremum over $x\in X_{\mathcal{A}}$ in (\ref{sub}).
\end{lem}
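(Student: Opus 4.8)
The plan is to recall the standard thermodynamic-formalism argument, since this is Walters' classical result on the existence of topological pressure via the variational/subadditivity route, adapted to the Gauss system. First I would set up the notation: for $n\ge 1$ write
\[
Z_n(\phi)=\sum_{(a_1,\dots,a_n)\in\mathcal{A}^n}\sup_{x\in X_{\mathcal A}}\exp\bigl(S_n\phi([a_1,\dots,a_n+x])\bigr),
\]
and verify that $0<Z_n(\phi)<\infty$ (finiteness uses $\mathrm{Var}_1(\phi)<\infty$ together with a comparison to a geometric-type series coming from $|I_n|\asymp q_n^{-2}$, or, when $\mathcal A$ is infinite, this is part of the hypothesis that makes the pressure finite; in the generality used later $\phi$ will always be of the form $-t\log|T'|$ plus bounded terms, for which convergence is classical). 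The key structural fact is quasi-multiplicativity: because $S_{m+n}\phi$ along a cylinder $(a_1,\dots,a_m,b_1,\dots,b_n)$ splits as $S_m\phi$ on the first block plus $S_n\phi\circ T^m$ on the second, and because the supremum over the tail differs from the corresponding supremum in $Z_n$ by at most $\sum_{k\ge 1}\mathrm{Var}_k(\phi)\cdot(\text{controlled factor})$, one obtains constants with
\[
Z_{m+n}(\phi)\le Z_m(\phi)\,Z_n(\phi)\qquad\text{(submultiplicativity up to a fixed constant)}.
\]
Taking logarithms, $(\log Z_n(\phi))_{n\ge1}$ is subadditive up to an additive constant, so Fekete's subadditive lemma gives that $\tfrac1n\log Z_n(\phi)$ converges to its infimum; this is exactly the existence of the limit defining $\mathsf P_{\mathcal A}(T,\phi)$.

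For the second assertion — that under $\mathrm{Var}_1(\phi)<\infty$ and $\mathrm{Var}_n(\phi)\to 0$ the supremum over $x\in X_{\mathcal A}$ may be dropped — I would compare $Z_n(\phi)$ with
\[
\widetilde Z_n(\phi)=\sum_{(a_1,\dots,a_n)\in\mathcal A^n}\exp\bigl(S_n\phi([a_1,\dots,a_n,1,1,1,\dots])\bigr)
\]
(any fixed choice of tail admissible in $X_{\mathcal A}$). On a fixed $n$-cylinder, two points $x,y$ with the same first $n$ digits satisfy
\[
\bigl|S_n\phi(x)-S_n\phi(y)\bigr|\le\sum_{j=0}^{n-1}\mathrm{Var}_{n-j}(\phi)=\sum_{i=1}^{n}\mathrm{Var}_i(\phi),
\]
and the hypothesis $\mathrm{Var}_i(\phi)\to 0$ forces $\tfrac1n\sum_{i=1}^n\mathrm{Var}_i(\phi)\to 0$ by Cesàro. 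Hence $e^{-o(n)}\widetilde Z_n(\phi)\le Z_n(\phi)\le e^{o(n)}\widetilde Z_n(\phi)$, so the two sequences have the same exponential growth rate, and the limit is unchanged whether or not one takes the supremum.

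The main obstacle, and the only place real care is needed, is the infinite-alphabet case $\mathcal A=\mathbb N$ (or $\mathcal A$ infinite): one must ensure all the sums $Z_n(\phi)$ are genuinely finite and that the quasi-multiplicativity constants do not depend on $n$, which requires controlling the distortion of $\log|T'|$ uniformly — precisely the bounded-distortion property $|I_n(a_1,\dots,a_n)|\asymp q_n^{-2}$ and $q_{m+n}\asymp q_m\,q_n$ recorded in the Preliminaries, plus the bound $\sum_{i\ge1}\mathrm{Var}_i(\phi)<\infty$ when $\phi$ has summable variations (which holds for the Hölder-type potentials $-s\log B-s\log|T'|$ used throughout). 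Since this lemma is quoted verbatim from Walters \cite{PW}, in the paper itself I would simply cite it; the sketch above indicates why it holds in the continued-fraction setting to which we apply it.
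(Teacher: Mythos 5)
The paper gives no proof of this lemma: it is stated and attributed directly to Walters \cite{PW}, and you correctly conclude at the end that a citation is the appropriate treatment in the text itself. Your reconstruction of the underlying argument is sound, with two small remarks. First, the submultiplicativity is in fact exact, so the ``up to a fixed constant'' hedge is unnecessary: with the convention $[a_1,\dots,a_{m+n}+x]$ in (\ref{sub}), the tail $z=[a_{m+1},\dots,a_{m+n}+x]$ lies in $X_{\mathcal{A}}$ whenever $a_{m+1},\dots,a_{m+n}\in\mathcal{A}$ and $x\in X_{\mathcal{A}}$, so the first-block supremum is dominated term by term by the corresponding supremum in $Z_m(\phi)$, giving $Z_{m+n}(\phi)\le Z_m(\phi)Z_n(\phi)$ outright and making the existence of the limit (with value possibly $+\infty$) an immediate application of Fekete's lemma, with no variation estimate needed at this stage. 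Second, your Ces\`aro observation is exactly the right mechanism for the second assertion: the hypotheses give $\mathrm{Var}_n(\phi)\le\mathrm{Var}_1(\phi)<\infty$ and $\mathrm{Var}_n(\phi)\to 0$, hence $\sum_{i=1}^{n}\mathrm{Var}_i(\phi)=o(n)$, and this is strictly weaker than the summable-variations condition you invoke near the end of your sketch --- the lemma as stated only needs the Ces\`aro bound, which you have, so the later appeal to $\sum_{i\ge1}\mathrm{Var}_i(\phi)<\infty$ should be dropped to avoid the impression that a stronger hypothesis is being used.
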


The following lemma presents a continuity property of the pressure function when the continued fraction system $([0,1), T)$ is approximated by its subsystems $(X_{\mathcal{A}},T).$
For the detailed proof, see \cite{HMU02} or \cite{LWWX}.

\begin{lem} [Hanus-Mauldin-Urba\'{n}ski, \cite{HMU02}] \label{approximation}
Let $\phi\colon [0,1)\to \mathbb{R}$ be a real function with $\text{Var}_1(\phi)<\infty$ and $\text{Var}_n(\phi)\to 0$ as $n\to\infty.$ We have
$$\mathsf{P}(T,\phi)=\sup\{\mathsf{P}_{\mathcal{A}}(T,\phi)\colon \mathcal{A} \text{ is a finite subset of }\N\}.$$
\end{lem}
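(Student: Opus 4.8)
The plan is to prove the two inequalities separately. The inequality $\mathsf{P}(T,\phi)\ge\sup\{\mathsf{P}_{\mathcal A}(T,\phi)\colon\mathcal A\text{ finite}\}$ is immediate from the definition (\ref{sub}): for any finite $\mathcal A\subset\N$ the sum over $\mathcal A^n$ is a sub-sum of the sum over $\N^n$ (all summands being positive), so $\mathsf{P}_{\mathcal A}(T,\phi)\le\mathsf{P}(T,\phi)$ after taking $\frac1n\log(\cdot)$ and letting $n\to\infty$. The real content is the reverse inequality $\mathsf{P}(T,\phi)\le\sup_{\mathcal A}\mathsf{P}_{\mathcal A}(T,\phi)$, i.e. that one does not lose pressure by restricting to large finite alphabets.

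For the reverse inequality I would fix $N\in\N$, put $\mathcal A_N=\{1,2,\ldots,N\}$, and estimate the ``tail'' contribution of words in $\N^n$ that use at least one digit exceeding $N$. Write $Z_n(\mathcal A)=\sum_{(a_1,\ldots,a_n)\in\mathcal A^n}\exp S_n\phi([a_1,\ldots,a_n+x])$ (the supremum over $x$ can be dropped up to a multiplicative constant $e^{\pm\mathrm{Var}_1(\phi)}$ by Walters' lemma, since $\mathrm{Var}_1(\phi)<\infty$ and $\mathrm{Var}_n(\phi)\to0$). The key geometric fact is that $\phi$ being of bounded variation forces $\phi$ to be bounded (indeed $|\phi(x)-\phi(y)|\le\mathrm{Var}_1(\phi)$ for $x,y$ in the same first cylinder, and there are only countably many first cylinders accumulating at $0$, where one also gets control), so $e^{S_n\phi}$ is comparable to $\prod$ of factors each bounded by a constant; more usefully, one shows that the weight of a single cylinder $I_1(a)$ with $a$ large is summable, i.e. $\sum_{a>N}\sup_{x\in I_1(a)}e^{\phi(x)}\to0$ as $N\to\infty$. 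Using the natural splitting of $\N^n$ according to the first position $j$ at which a digit $>N$ appears, together with sub-multiplicativity of the partition function (which follows from $q_{m+n}\asymp q_m q_n$ and the Gauss-map chain rule controlling $S_n\phi$), one obtains a bound of the shape
\begin{equation*}
Z_n(\N)\ \le\ Z_n(\mathcal A_N)\ +\ \sum_{j=1}^{n} Z_{j-1}(\N)\cdot\varepsilon_N\cdot C^{\,n-j},
\end{equation*}
where $\varepsilon_N\to0$ as $N\to\infty$ and $C$ is a fixed constant (coming from a uniform bound on the per-step growth of $Z$). A standard generating-function / root-test manipulation of this recursive inequality then yields $\limsup_n\frac1n\log Z_n(\N)\le\max\{\mathsf{P}_{\mathcal A_N}(T,\phi),\ \log(\text{something}\to\mathsf{P}_{\mathcal A_N})\}$, and letting $N\to\infty$ gives $\mathsf{P}(T,\phi)\le\sup_N\mathsf{P}_{\mathcal A_N}(T,\phi)\le\sup_{\mathcal A}\mathsf{P}_{\mathcal A}(T,\phi)$.

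The main obstacle is making the tail estimate quantitative and uniform: one must show that the contribution of words containing a large digit is not merely a ``small proportion'' but is exponentially negligible relative to $e^{n\mathsf{P}_{\mathcal A_N}}$ once $N$ is large — this is exactly where the hypotheses $\mathrm{Var}_1(\phi)<\infty$ and $\mathrm{Var}_n(\phi)\to0$ are used, to guarantee both that $\phi$ is bounded above (so $\sup_{I_1(a)}e^{\phi}$ decays like $|I_1(a)|\asymp a^{-2}$ times a bounded factor, hence is summable in $a$) and that the Birkhoff sums factorize up to uniformly bounded error across concatenations of cylinders. Once the summable-tail fact $\sum_{a>N}\sup_{x\in I_1(a)}e^{\phi(x)}\to0$ is in hand, the recursive inequality above and its asymptotic analysis are routine. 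I would also remark that this is the one place the argument genuinely needs $\phi$ to have the specific form $-s\log B-s\log|T'|$ type potentials appearing later (for which $\mathrm{Var}_n\to0$ because $\log|T'|$ has summable variations on cylinders), so the lemma applies directly to the potentials used in Theorem \ref{thm2}.
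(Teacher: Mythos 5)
The paper cites \cite{HMU02,LWWX} for this lemma and gives no proof of its own (``For the detailed proof, see \cite{HMU02} or \cite{LWWX}''), so I compare your proposal against the argument in those references. Your easy direction is fine. For the hard direction the cited proof is both simpler and more general than your tail-splitting scheme. Writing $Z_n(\mathcal A)=\sum_{(a_1,\ldots,a_n)\in\mathcal A^n}\sup_{x}\exp S_n\phi([a_1,\ldots,a_n+x])$, the bounded-distortion estimate $q_{m+n}\le 2q_mq_n$ together with $\text{Var}_n(\phi)\to0$ gives almost-superadditivity $Z_{m+n}(\mathcal A)\ge c\,Z_m(\mathcal A)\,Z_n(\mathcal A)$, hence $\mathsf P_{\mathcal A}(T,\phi)\ge\frac1n\log Z_n(\mathcal A)-\frac{\log(1/c)}{n}$ for every fixed $n$. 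For fixed $n$, $Z_n(\mathcal A_N)\nearrow Z_n(\N)$ as $N\to\infty$ by monotone convergence. Either $Z_n(\N)=\infty$ for some $n$, in which case $\sup_N\mathsf P_{\mathcal A_N}=\infty=\mathsf P(T,\phi)$ trivially, or one picks $n$ large so that $\frac1n\log Z_n(\N)$ is within $\varepsilon$ of $\mathsf P(T,\phi)$ and then $N$ large so that $Z_n(\mathcal A_N)$ approximates $Z_n(\N)$. No decomposition by the first large digit and no generating-function analysis is needed.

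Your proposal has a concrete gap at the step where you claim the tail $\varepsilon_N=\sum_{a>N}\sup_{I_1(a)}e^{\phi}$ tends to $0$. You justify this by asserting that $\text{Var}_1(\phi)<\infty$ ``forces $\phi$ to be bounded'' and that $\sup_{I_1(a)}e^{\phi}$ ``decays like $|I_1(a)|\asymp a^{-2}$.'' Both assertions are false: $\text{Var}_1(\phi)<\infty$ controls the oscillation of $\phi$ \emph{within} each first-order cylinder but says nothing about its values across cylinders, and the $a^{-2}$ decay is a feature of the particular potential $-s\log|T'|$, not a consequence of the variation hypotheses. For instance $\phi\equiv0$ satisfies both hypotheses yet $\sum_a\sup_{I_1(a)}e^{\phi}=\infty$; and the potentials actually used in Theorem \ref{thm2} are unbounded below near $0$. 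Thus the summable-tail fact your recursion rests on does not follow from the hypotheses of the lemma; it is equivalent to $Z_1(\N)<\infty$, an extra assumption, and the case $Z_1(\N)=\infty$ (where the lemma holds trivially) must be treated separately. Your closing remark that the lemma ``genuinely needs'' the special form of the potential is likewise off: the superadditivity argument shows the lemma holds in full generality with no decay assumption on $\phi$. Finally, even granting summability, the constant $C$ in your bound $Z_n(\N)\le Z_n(\mathcal A_N)+\sum_j Z_{j-1}(\N)\varepsilon_N C^{n-j}$ is a crude one-step growth bound that can exceed $e^{\mathsf P(T,\phi)}$, so the root-test step you call ``routine'' does not close without a more careful generating-function argument.
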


Now we consider a specific potential
$$\phi_1(x)=-(3s-1-s^2)\log B-s\log|T'(x)|,$$
where $1<B<\infty$ and $s\ge 0.$ Clearly, $\phi_1$ satisfies the variation condition and   Lemma \ref{approximation} holds. The pressure function (\ref{sub}) with potential $\phi_1$ reads
$$\mathsf{P}(T,\phi_1) =\lim_{n\to\infty}\frac{\log\sum\limits_{(a_{1},\ldots,a_{n})\in \mathcal{A}^{n}}\exp{S_n\phi_1(x)}}{n}\\
                       =\lim_{n\to\infty}\frac{\log\sum\limits_{(a_{1},\ldots,a_{n})\in \mathcal{A}^{n}}B^{-n(3s-1-s^2)}q_n^{-2s}}{n}.$$

For $n\ge 1$ and $s\ge 0,$ we put
$$f_n(s)=\sum_{(a_1,\ldots,a_n)\in \mathcal{A}^{n}}B^{-n(3s-1-s^2)}q_n^{-2s},$$
and
\begin{align*}
  &s_{n,B}(\mathcal{A})=\inf\{s\ge 0\colon f_n(s)\le 1\},\\
  &s_{B}(\mathcal{A})=\inf\big\{s\ge0\colon \mathsf{P}_{\mathcal{A}}(T, -(3s-1-s^2)\log B-s\log|T'|)\le 0\big\},\\
  &s_{B}(\N)=\inf\big\{s\ge0\colon \mathsf{P}(T, -(3s-1-s^2)\log B-s\log|T'|)\le 0\big\}.
\end{align*}

If $\mathcal{A}\subset \N$ is a finite set, it is readily checked that both $f_n(s)$ and $\mathsf{P}_{\mathcal{A}}(T,\phi_1)$
are continuous and monotonically decreasing  with respect to $s.$
Therefore, $s_{n,B}(\mathcal{A})$ and $s_{B}(\mathcal{A})$ are respectively the unique solutions of $f_n(s)=1$ and $\mathsf{P}_{\mathcal{A}}(T, \phi_1)=0.$

For $M\in \N,$ put $\mathcal{A}_M=\{1,2,\ldots,M\}.$ For simplicity, we write $s_{n,B}(M)$ for $s_{n,B}(\mathcal{A}_M),$
$s_{B}(M)$ for $s_{B}(\mathcal{A}_M),$ $s_{n,B}$ for $s_{n,B}(\N)$ and $s_B$ for $s_B(\N).$

\begin{cor}\label{corollary}
For any integer $M\in \N,$
$$\lim_{n\to\infty}s_{n,B}(M)=s_{B}(M),\ \ \lim_{M\to\infty}s_{B}(M)=s_B.$$
Noting that the dimensional number $s_B$ is continuous with respect to $B\in (1,\infty),$ we obtain
$$\lim_{B\to 1}s_B=1, \ \ \lim_{B\to\infty} s_B=\frac{1}{2}.$$
\end{cor}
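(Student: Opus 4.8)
The plan is to prove Corollary~\ref{corollary} in two stages: first the approximation identities $\lim_{n\to\infty}s_{n,B}(M)=s_B(M)$ and $\lim_{M\to\infty}s_B(M)=s_B$, and then the two boundary limits $\lim_{B\to1}s_B=1$ and $\lim_{B\to\infty}s_B=\tfrac12$.

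\medskip

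\noindent\textbf{Step 1: the two approximation identities.}
For fixed finite $M$, the function $f_n(s)$ is continuous and strictly decreasing in $s$, and by submultiplicativity of the sums $\sum_{(a_1,\dots,a_n)\in\mathcal A_M^n}q_n^{-2s}$ (using $1\le q_{n+m}/(q_nq_m)\le 2$) one gets that $\tfrac1n\log f_n(s)$ converges to $\mathsf P_{\mathcal A_M}(T,\phi_1)$; hence for $s$ slightly above $s_B(M)$ the pressure is negative and so $f_n(s)<1$ for all large $n$, giving $\limsup_n s_{n,B}(M)\le s_B(M)$, and symmetrically for $s$ slightly below $s_B(M)$ the pressure is positive so $f_n(s)>1$ eventually, giving $\liminf_n s_{n,B}(M)\ge s_B(M)$. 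This is essentially the standard argument that the "finite-level" dimension numbers converge to the pressure zero; it appears in \cite{WW08,HWX20} and one should cite it rather than redo it. For $\lim_{M\to\infty}s_B(M)=s_B$: the sequence $s_B(M)$ is non-decreasing in $M$ and bounded above by $s_B(\N)=s_B$, so the limit $s_\infty:=\lim_M s_B(M)$ exists and $s_\infty\le s_B$. For the reverse inequality apply Lemma~\ref{approximation} with $\phi=\phi_1$ evaluated at $s=s_\infty$ (note $\phi_1$ satisfies the variation hypotheses, as already observed in the text): since each $\mathsf P_{\mathcal A_M}(T,\phi_1)\le 0$ at $s=s_B(M)\le s_\infty$ and the pressure is decreasing in $s$, we get $\mathsf P_{\mathcal A_M}(T,\phi_1|_{s=s_\infty})\le 0$ for every $M$, whence $\mathsf P(T,\phi_1|_{s=s_\infty})=\sup_M \mathsf P_{\mathcal A_M}(\cdots)\le 0$, so $s_B\le s_\infty$.

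\medskip

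\noindent\textbf{Step 2: continuity in $B$ and the two boundary limits.}
Continuity of $B\mapsto s_B$ on $(1,\infty)$ follows because $(s,B)\mapsto\mathsf P(T,-(3s-1-s^2)\log B-s\log|T'|)$ is jointly continuous and strictly decreasing in $s$ near the zero $s_B\in(\tfrac12,1)$ (the coefficient $3s-1-s^2$ stays positive there, so increasing $B$ strictly decreases the potential, hence strictly decreases the pressure), and one invokes the implicit-function/monotonicity argument to get continuity of the zero; alternatively sandwich via $s_B(M)$, each of which is continuous in $B$, and the convergence $s_B(M)\uparrow s_B$ is monotone, so $s_B$ is lower semicontinuous, and upper semicontinuity comes from the explicit pressure bound. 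For $B\to1$: at $s=1$ the exponent $3s-1-s^2=1$, so $\phi_1|_{s=1}=-\log B-\log|T'|$; as $B\to1$ this tends to $-\log|T'|$, whose pressure $\mathsf P(T,-\log|T'|)=0$ (the Bowen-type equation for the Gauss system, with zero $s=1$). By continuity of the pressure in the potential (uniform convergence of $-\log B-\log|T'|$ to $-\log|T'|$ since $\log B\to0$) one concludes $s_B\to1$. For $B\to\infty$: write $3s-1-s^2=-(s-\tfrac12)^2+\tfrac14$; as $B\to\infty$, in order for $\mathsf P(T,\phi_1)$ to stay bounded (it must be $\ge$ some fixed negative value at the zero and the $-s\log|T'|$ part has pressure $\to+\infty$ as $s\to\tfrac12^+$... more carefully) one argues: if $s_B\to s_*$ along a subsequence with $s_*>\tfrac12$, then $3s_B-1-s_B^2$ is bounded below by a positive constant, so $-(3s_B-1-s_B^2)\log B\to-\infty$ uniformly, forcing $\mathsf P(T,\phi_1)\to-\infty$, contradicting $\mathsf P(T,\phi_1|_{s=s_B})=0$; hence $s_*\le\tfrac12$. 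Combined with the already-known lower bound $s_B\ge\tfrac12$ (from $\mathcal E_2(\varphi)\subset\mathcal F_2(\varphi)$ and $\dim_H\mathcal E_2=1/(1+b)\ge1/2$ in the relevant regime, or directly because at $s<\tfrac12$ the coefficient $3s-1-s^2<0$ makes $-(3s-1-s^2)\log B>0$ so the pressure is $\ge\mathsf P(T,-s\log|T'|)>0$ for $s<\tfrac12$), this gives $s_B\to\tfrac12$.

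\medskip

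\noindent\textbf{Main obstacle.}
The delicate point is the $B\to\infty$ limit: one must rule out $s_B$ accumulating strictly above $\tfrac12$, which requires controlling the interplay between the divergent factor $B^{-n(3s-1-s^2)}$ and the growth of $\sum q_n^{-2s}$ as $s\downarrow\tfrac12$ (where $\mathsf P(T,-s\log|T'|)\to+\infty$). The clean way is to fix any $\varepsilon>0$, note $3(\tfrac12+\varepsilon)-1-(\tfrac12+\varepsilon)^2=\tfrac14-\varepsilon^2>0$, so for $s=\tfrac12+\varepsilon$ we have $f_n(s)\le B^{-n(\tfrac14-\varepsilon^2)}\sum_{(a_1,\dots,a_n)\in\N^n}q_n^{-2s}\le B^{-n(\tfrac14-\varepsilon^2)}\cdot C^n$ for a constant $C=C(\varepsilon)$ (since $\sum_{a\in\N}a^{-2s}<\infty$ for $s>\tfrac12$, using $q_n\ge\prod a_i$ gives $\sum q_n^{-2s}\le(\sum_a a^{-2s})^n$... modulo the $q_n^2\ge\tfrac12\prod\cdots$ constants), which is $<1$ for $B$ large enough depending on $\varepsilon$; hence $s_B\le\tfrac12+\varepsilon$ for all large $B$, and letting $\varepsilon\to0$ finishes it. I would present this $f_n$-based estimate in full since it is the one genuinely new computation; the rest is citation of \cite{WW08,HWX20,HMU02} plus the standard pressure machinery already set up in this section.
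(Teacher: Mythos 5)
Your proposal follows exactly the route the paper takes: the paper's own proof is a one-line citation of Lemmas 2.4--2.6 in \cite{WW08}, and you have correctly filled in what those arguments amount to (submultiplicativity giving $\tfrac1n\log f_n\to\mathsf P_{\mathcal A_M}$, monotonicity of $s_B(M)$ in $M$ plus Lemma~\ref{approximation} for the sup identity, and the joint continuity/monotonicity argument for the boundary limits). The ``clean way'' you give at the end for $B\to\infty$ is exactly the right self-contained estimate: for $s=\tfrac12+\varepsilon$, $3s-1-s^2\ge\tfrac14-\varepsilon^2>0$ and $\sum_{\N^n}q_n^{-2s}\le(\sum_a a^{-2s})^n<\infty$, so $f_n(s)\le (B^{-(\tfrac14-\varepsilon^2)}C(\varepsilon))^n\to0$ once $B$ is large. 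Combined with divergence of $\sum_a a^{-2s}$ for $s\le\tfrac12$ (which forces $\mathsf P(T,\phi_1)=+\infty$ there, regardless of the sign of the constant term) this pins $s_B\to\tfrac12$.

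Two small algebraic slips are worth correcting, though neither invalidates the argument. First, the completion of the square: $3s-1-s^2=-(s-\tfrac32)^2+\tfrac54$, not $-(s-\tfrac12)^2+\tfrac14$ (the latter would give $0$ at $s=1$, but $3\cdot1-1-1=1$). Second, and more materially, the coefficient $3s-1-s^2$ does \emph{not} become negative just below $s=\tfrac12$: its roots are $(3\pm\sqrt5)/2\approx 0.382$ and $2.618$, so it is still \emph{positive} on $(0.382,\tfrac12)$, which means $-(3s-1-s^2)\log B$ is negative there and your parenthetical ``makes $-(3s-1-s^2)\log B>0$ so the pressure is $\ge\mathsf P(T,-s\log|T'|)$'' is false as stated. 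The lower bound $s_B\ge\tfrac12$ nevertheless holds, for the reason you also mention: for $s<\tfrac12$, $\mathsf P(T,-s\log|T'|)=+\infty$ (the series $\sum_a a^{-2s}$ diverges), and adding the finite constant $-(3s-1-s^2)\log B$ leaves the pressure $+\infty$. I would drop the sign-based justification and keep only the divergence one.
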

\begin{proof}
The similar arguments as Lemmas 2.4-2.6 in \cite{WW08} apply.
\end{proof}

\section{Proof of Theorem \ref{thm1}}
The proof of Theorem \ref{thm1} consists of two parts: the convergence part and the divergence part. We first recall that
\begin{align*}
\mathcal{F}_2(\varphi)=\Big\{x\in[0,1)\colon  \exists 1\le k< n,  ~&a_{k}(x)a_{k+1}(x)\ge \varphi(n), \\&a_{n}(x)a_{n+1}(x)\ge \varphi(n) \text{ for infinitely many } n\in \N  \Big\}.
\end{align*}
Write
$$A_n=\big\{x\in[0,1)\colon \exists 1\le k< n,  a_{k}(x)a_{k+1}(x)\ge \varphi(n), a_{n}(x)a_{n+1}(x)\ge\varphi(n)\big\}.$$
Then the set $\mathcal{F}_2(\varphi)$ can further be written as
$$\mathcal{F}_2(\varphi)=\bigcap_{N=1}^{\infty}\bigcup_{n=N}^{\infty}A_n.$$

\subsection{The convergence part of Theorem \ref{thm1}}
In this subsection, we prove that
$$\mathcal{L}\big(\mathcal{F}_2(\varphi)\big)=0 \text{ when }\sum_{n=1}^{\infty}\frac{\varphi(n)+n\log^{2} \varphi(n)}{\varphi(n)^{2}}<\infty.$$
As usual, we will use the convergence part of the Borel-Cantelli Lemma \ref{BC lemma}. The main task is to estimate the Lebesgue measure of the set $A_{n}.$
\begin{lem}
The Lebesgue measure of $A_n$ satisfies
$$\mathcal{L}(A_n)\ll\frac{n\log^{2} \varphi(n)}{\varphi(n)^{2}}+\frac{1}{\varphi(n)}.$$
\end{lem}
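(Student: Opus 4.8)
The set $A_n$ is the union over $1\le k<n$ of the events "$a_k a_{k+1}\ge\varphi(n)$ \emph{and} $a_n a_{n+1}\ge\varphi(n)$''. The natural strategy is to split into the diagonal-type term $k=n-1$ (where the two blocks of partial quotients overlap, sharing the index $n$) and the generic term $k\le n-2$ (where the two blocks are disjoint), and to bound each using Lemma \ref{lem2.1}. Write $l=\lceil\varphi(n)\rceil$ so that the sets $J_k$, $\widetilde J_k$, $H_n$ introduced before Lemma \ref{lem2.1} are exactly the relevant events for this value of $l$.

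First I would treat the overlapping case $k=n-1$. Here the event "$a_{n-1}a_n\ge\varphi(n)$ and $a_na_{n+1}\ge\varphi(n)$'' is precisely $H_n$ (summed over the free coordinates $a_1,\dots,a_{n-2}$), so by Lemma \ref{lem2.1}(2) and $\sum_{a_1,\dots,a_{n-2}}|I_{n-2}|\le 1$ its total measure is $\ll 1/\varphi(n)$. This accounts for the second term in the claimed bound.

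Next, the generic case: for fixed $1\le k\le n-2$ the event factorizes because the two constraints involve disjoint blocks of coordinates $\{k,k+1\}$ and $\{n,n+1\}$. Using the quasi-multiplicativity $q_{a+b}\asymp q_a q_b$ of the denominators (equivalently $|I_{a+b}|\asymp|I_a||I_b|$), the measure of this event splits as a product of (i) a factor coming from the constraint $a_ka_{k+1}\ge\varphi(n)$ with the other coordinates free, which by Lemma \ref{lem2.1}(1) contributes $\ll(\log\varphi(n))/\varphi(n)$, and (ii) a factor coming from $a_na_{n+1}\ge\varphi(n)$ with its neighbours free, again $\ll(\log\varphi(n))/\varphi(n)$ by the same lemma. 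Hence each such term is $\ll(\log^2\varphi(n))/\varphi(n)^2$, and summing over the $\le n$ choices of $k$ gives $\ll n(\log^2\varphi(n))/\varphi(n)^2$. Adding the two contributions yields $\mathcal L(A_n)\ll n\log^2\varphi(n)/\varphi(n)^2+1/\varphi(n)$, as desired.

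The main technical obstacle is making the factorization in the generic case rigorous: one must be careful that Lemma \ref{lem2.1}(1), which estimates the measure of $J_k$ \emph{relative to a fixed cylinder} on the complementary coordinates, can be applied with the coordinate $n$ playing the role of a "free'' index for the block $\{k,k+1\}$ and vice versa, and that the two relative estimates can be multiplied after absorbing the bounded constants from $q_{a+b}\asymp q_a q_b$. A secondary nuisance is handling the boundary case $k=n-1$ versus $k\le n-2$ cleanly (and, if one wishes, the degenerate small-$k$ indices where $k+1$ might equal $n$, which only occurs when $k=n-1$ and is already covered by the $H_n$ analysis). None of this is deep, but the bookkeeping of which coordinates are fixed in each invocation of Lemma \ref{lem2.1} is where care is needed.
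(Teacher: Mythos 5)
Your proposal is correct and is essentially the paper's own argument: both split the union over $k$ into the overlapping case $k=n-1$ (handled via Lemma \ref{lem2.1}(2) on $H_n$, giving $\ll 1/\varphi(n)$) and the disjoint case $k\le n-2$ (handled by two sequential applications of Lemma \ref{lem2.1}(1) — first marginalizing the block $\{n,n+1\}$, then the block $\{k,k+1\}$ — giving $\ll\log^2\varphi(n)/\varphi(n)^2$ per $k$, then summed over the at most $n$ values of $k$). Your ``factorization via quasi-multiplicativity'' framing is exactly what the two-step use of Lemma \ref{lem2.1}(1) encodes.
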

\begin{proof}
The set $A_n$ can be written as a union over a collection of $(n+1)$-order cylinders:
$$A_{n}=\bigcup_{k=1}^{n-1}\bigcup_{\substack{
                                      a_1,\ldots,a_n,a_{n+1}\in \mathbb{N}\colon \\
                                      a_ka_{k+1}\ge \varphi(n), a_{n}a_{n+1}\ge \varphi(n)}}I_{n+1}(a_1,\ldots,a_{n},a_{n+1}).$$
By Lemma \ref{lem2.1}, we have
\begin{align*}
  \mathcal{L}(A_n)&\le\sum_{k=1}^{n-2}\sum_{\substack{
                                      (a_1,\ldots,a_{n-1})\in \mathbb{N}^{n-1}\colon \\
                                     a_ka_{k+1}\ge \varphi(n)}} \sum_{a_{n}a_{n+1}\ge \varphi(n)}
                                    |I_{n+1}| +\sum_{\substack{
                                      (a_1,\ldots,a_n,a_{n+1})\in \mathbb{N}^{n+1}\colon \\
                                     a_{n-1}a_{n}\ge \varphi(n), a_{n}a_{n+1}\ge \varphi(n)}} |I_{n+1}|& \\
                &\asymp\sum_{k=1}^{n-2}\sum_{\substack{
                                      (a_1,\ldots,a_{n-1})\in \mathbb{N}^{n-1}\colon\\
                                     a_ka_{k+1}\ge \varphi(n)}} |I_{n-1}|\cdot\frac{\log \varphi(n)}{\varphi(n)}
                                    +\frac{1}{\varphi(n)}\\
                &\asymp\frac{n\log^{2} \varphi(n)}{\varphi(n)^{2}}+\frac{1}{\varphi(n)}.
\end{align*}
\end{proof}

\subsection{The divergence part of Theorem \ref{thm1}}
In this subsection, we will prove that
$$\mathcal{L}\big(\mathcal{F}_2(\varphi)\big)=1 \text{ when }\sum_{n=1}^{\infty}\frac{\varphi(n)+n\log^{2} \varphi(n)}{\varphi(n)^{2}}=\infty.$$
As mentioned, our strategy is to apply the Chung-Erd\"os inequality in each basic cylinder, and then to use Knopp's lemma.

Let us start by treating the condition that
$$\sum_{n=1}^{\infty}\frac{\varphi(n)+n\log^{2} \varphi(n)}{\varphi(n)^{2}}=\infty.$$
We  may assume without loss of generality  that $\varphi(n)\ge n\log \varphi(n)$ for all $n\ge 1.$ Otherwise, we introduce an auxiliary function
$$\phi(n):=\max\{\varphi(n), n\log \varphi(n)\}.$$
Obviously, $\phi\ge\varphi$ and $\phi$ is a non-decreasing function, and thus $\mathcal{F}_2(\phi)$ is a subset of $\mathcal{F}_2(\varphi).$ We now check that
$$\sum_{n=1}^{\infty}\frac{\varphi(n)+n\log^{2} \varphi(n)}{\varphi(n)^{2}}=\infty
\Rightarrow
\sum_{n=1}^{\infty}\frac{\phi(n)+n\log^{2} \phi(n)}{\phi(n)^{2}}=\infty.$$
Since    $\varphi(n)<n\log \varphi(n)$ infinitely often,
we  choose an infinite subsequence   $\{m_k\}_{k\ge 1}$  with
$$m_{k+1}>2m_k \text { and } \varphi(m_k)<m_k\log \varphi(m_k).$$
It follows that
\begin{align*}
  \sum_{n=1}^{\infty}\frac{\phi(n)+n\log^{2} \phi(n)}{\phi(n)^{2}}
  &\ge \sum_{k=1}^{\infty}\sum_{\frac{m_{k+1}}{2}\le n\le m_{k+1}}\frac{n}{(\phi(n)/\log \phi(n))^{2}}\\
  &\ge\sum_{k=1}^{\infty}\sum_{\frac{m_{k+1}}{2}\le n\le m_{k+1}}\frac{m_{k+1}/2}{m_{k+1}^{2}}
  \ge\sum_{k=1}^{\infty}\frac{1}{4}=\infty.
\end{align*}

Throughout this subsection, we fix a basic $t$-th cylinder $I_{t}=I_t(b_1,\ldots,b_t)$.
To utilise the Chung-Erd\"{o}s inequality on $\mathcal{F}_2(\varphi)\cap I_t=\limsup (A_n\cap I_t)$,
we need estimate a lower bound for the Lebesgue measure of $A_n\cap I_t$ and an upper bound for the Lebesgue measure of $A_n\cap A_m\cap I_t$.

\begin{lem}\label{measure1}
For $n$ large enough,  we have
$$\mathcal{L}(A_n\cap I_t)\gg \frac{\varphi(n)+n\log^{2} \varphi(n)}{\varphi(n)^{2}}\cdot |I_t|.$$
\end{lem}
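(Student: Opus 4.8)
The plan is to bound $\mathcal L(A_n\cap I_t)$ below by a constant multiple of each of $\frac{1}{\varphi(n)}|I_t|$ and $\frac{n\log^{2}\varphi(n)}{\varphi(n)^{2}}|I_t|$ separately; since $\frac{\varphi(n)+n\log^{2}\varphi(n)}{\varphi(n)^{2}}\asymp\max\{\tfrac1{\varphi(n)},\tfrac{n\log^{2}\varphi(n)}{\varphi(n)^{2}}\}$, this yields the lemma. Throughout I use the reduction from the start of this subsection, $\varphi(n)\ge n\log\varphi(n)$ (so $n\log\varphi(n)/\varphi(n)\le 1$), together with $\log\varphi(n)\ge1$ for $n$ large. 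The first bound is the easy one: choosing $k=n-1$ shows $A_n\cap I_t\supseteq\{x\in I_t\colon a_{n-1}a_n\ge\varphi(n),\ a_na_{n+1}\ge\varphi(n)\}$, and summing the estimate of Lemma~\ref{lem2.1}(2) over all prefixes $(a_{t+1},\dots,a_{n-2})$ compatible with $I_t$ (using $\sum|I_{n-2}|\asymp|I_t|$) gives that this set has measure $\asymp\frac1{\varphi(n)}|I_t|$.

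For the second bound I would set $E_j=\{x\colon a_j(x)a_{j+1}(x)\ge\varphi(n)\}$ and work with
\[
G_n=\Big(\bigcup_{k=t+1}^{n-2}\big(E_k\cap I_t\big)\Big)\cap E_n\ \subseteq\ A_n\cap I_t ,
\]
the inclusion holding because any index $k\le n-2<n$ witnesses the first large product and $E_n$ the second. Truncating the union at $k=n-2$ is the key bookkeeping point: then $U_n:=\bigcup_{k=t+1}^{n-2}(E_k\cap I_t)$ is a union of cylinders of order at most $n-1$, hence depends only on $a_{t+1},\dots,a_{n-1}$, while $E_n$ depends only on $a_n,a_{n+1}$. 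Refining $U_n$ into a disjoint union of order-$(n-1)$ cylinders and applying Lemma~\ref{lem2.1}(1) (the $J_n$-estimate) on each, I expect $\mathcal L(G_n)\asymp\frac{\log\varphi(n)}{\varphi(n)}\,\mathcal L(U_n)$. This reduces the task to proving $\mathcal L(U_n)\gg\frac{n\log\varphi(n)}{\varphi(n)}|I_t|$.

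To obtain this lower bound I would apply the Chung--Erd\"{o}s inequality to the $\asymp n$ events $\{E_k\cap I_t\}_{t+1\le k\le n-2}$, after normalising $\mathcal L$ on $I_t$ to a probability measure. By Lemma~\ref{lem2.1}(1), applied at level $k+1$ and summed over prefixes compatible with $I_t$, each term satisfies $\mathcal L(E_k\cap I_t)\asymp\frac{\log\varphi(n)}{\varphi(n)}|I_t|$, so the first moment is $\asymp n\frac{\log\varphi(n)}{\varphi(n)}|I_t|$. For pairwise intersections: if $|k-k'|\ge2$ the two conditions sit on disjoint digit blocks, and the positions of cylinders (Proposition~\ref{cylinder}) give the quasi-independence bound $\mathcal L(E_k\cap E_{k'}\cap I_t)\ll\frac1{|I_t|}\mathcal L(E_k\cap I_t)\mathcal L(E_{k'}\cap I_t)\asymp\big(\tfrac{\log\varphi(n)}{\varphi(n)}\big)^2|I_t|$; if $k'=k+1$, the set $E_k\cap E_{k+1}\cap I_t$ is of the type handled in Lemma~\ref{lem2.1}(2) (the shared digit $a_{k+1}$ playing the role of $a_n$ there), so its measure is $\asymp\frac1{\varphi(n)}|I_t|$. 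Summing, the Chung--Erd\"{o}s denominator is
\[
\sum_{k}\mathcal L(E_k\cap I_t)+\sum_{k\ne k'}\mathcal L(E_k\cap E_{k'}\cap I_t)\ \ll\ \Big(n\frac{\log\varphi(n)}{\varphi(n)}+n^{2}\frac{\log^{2}\varphi(n)}{\varphi(n)^{2}}+\frac{n}{\varphi(n)}\Big)|I_t|\ \asymp\ n\frac{\log\varphi(n)}{\varphi(n)}|I_t|,
\]
where the middle term is absorbed using $n\log\varphi(n)/\varphi(n)\le1$ and the last using $\log\varphi(n)\ge1$. Chung--Erd\"{o}s then gives $\mathcal L(U_n)\gg\big(n\frac{\log\varphi(n)}{\varphi(n)}|I_t|\big)^{2}\big/\big(n\frac{\log\varphi(n)}{\varphi(n)}|I_t|\big)=n\frac{\log\varphi(n)}{\varphi(n)}|I_t|$, hence $\mathcal L(G_n)\gg\frac{n\log^{2}\varphi(n)}{\varphi(n)^{2}}|I_t|$; combining with the first bound and $\mathcal L(A_n\cap I_t)\ge\max\{\tfrac1{\varphi(n)}|I_t|,\mathcal L(G_n)\}$ finishes the proof.

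The step I expect to be the main obstacle is the lower bound $\mathcal L(U_n)\gg\frac{n\log\varphi(n)}{\varphi(n)}|I_t|$: one must make the quasi-independence estimate for far-apart pairs precise from the cylinder positions, and — more delicately — observe that the \emph{neighbouring} pairs $E_k\cap E_{k+1}$ carry the anomalously large weight $\asymp1/\varphi(n)$ (two overlapping large products) and check that it is still dominated by the first moment. Both checks go through precisely because of the normalisation $\varphi(n)\ge n\log\varphi(n)$, i.e. because of the exact shape of the divergence hypothesis; this is also why the threshold in Theorem~\ref{thm1} carries the extra summand $\varphi(n)$ rather than just $n\log^{2}\varphi(n)$.
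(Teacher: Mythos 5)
Your argument is correct, but it takes a genuinely different route from the paper's. The paper builds an explicit \emph{disjoint} family $\{\mathcal B_{i,n}(\varphi)\}_{i}$ inside $A_n\cap I_t$, indexed by the position $i$ of a large product and filtered by a ``no large product after $i$'' condition that is made tractable by restricting $n-i$ to a fixed parity (so the blocks $(a_{n-(2k+1)},a_{n-2k})$ are disjoint); the resulting measures $\mathcal L(\mathcal C_{i,n})\gg\frac{\log^2\varphi(n)}{\varphi(n)^2}(1-\tfrac1{2n})^{n-i}|I_{i-1}|$ are then summed to $\asymp\frac{n\log^2\varphi(n)}{\varphi(n)^2}|I_t|$ together with the $k=n-1$ piece. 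You instead separate the $1/\varphi(n)$ term (via $k=n-1$ and Lemma~\ref{lem2.1}(2)) from the $n\log^{2}\varphi(n)/\varphi(n)^{2}$ term, reduce the latter to $\mathcal L(U_n)\gg\frac{n\log\varphi(n)}{\varphi(n)}|I_t|$ for $U_n=\bigcup_{t<k\le n-2}(E_k\cap I_t)$ by noting that $U_n$ is a union of order-$(n-1)$ cylinders independent of the $E_n$-constraint, and prove the $U_n$ estimate by an internal second-moment (Chung--Erd\"os) argument in which the near-pair contribution $\asymp n/\varphi(n)$ is absorbed using the normalisation $\varphi(n)\ge n\log\varphi(n)$. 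Your route avoids the parity bookkeeping entirely (Chung--Erd\"os does not need a disjoint family) and makes the role of the normalising assumption transparent; the paper's is fully constructive and needs no auxiliary Cauchy--Schwarz step. One presentational point: the far-pair bound $\mathcal L(E_k\cap E_{k'}\cap I_t)\ll(\log\varphi(n)/\varphi(n))^{2}|I_t|$ for $|k-k'|\ge 2$ should be justified by iterating Lemma~\ref{lem2.1}(1) over the two disjoint digit blocks, exactly as the paper does for $U_1$ and $U_3$ in Lemma~\ref{measure2}; Proposition~\ref{cylinder} only records the ordering of sub-cylinders and does not by itself give the quasi-independence estimate.
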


\begin{proof}
Since $\varphi(n)\to\infty$, there exists  $N\in \mathbb{N}$ such that, for  all $n\ge N,$
 $n\ge 2t+4$, $\varphi(n)>\Big(\max\{b_1,\ldots,b_t\}\Big)^{2}$, and thus
$$A_n\cap I_t=\big\{x\in I_t(b_1,\ldots,b_t)\colon \exists t< k< n,  a_{k}(x)a_{k+1}(x)\ge \varphi(n), a_{n}(x)a_{n+1}(x)\ge\varphi(n)\big\}.$$
In order to prove the lemma, we construct a subset $\mathcal{B}_{n}(\varphi)$ of $A_n\cap I_t,$ and show that
$$\mathcal{L}(\mathcal{B}_{n}(\varphi))\gg \frac{\varphi(n)+n\log^{2} \varphi(n)}{\varphi(n)^{2}}\cdot |I_t|.$$

Assume that $n-t$ is even (the same conclusion can be drawn for the case $n-t$ is odd). Set
$$\mathcal{B}_{n-1,n}(\varphi):=\{x\in I_t(b_1,\ldots,b_t)\colon a_{n-1}(x)a_{n}(x)\ge \varphi(n), a_n(x)a_{n+1}(x)\ge\varphi(n)\}.$$
For $t<i<n-1$ with $n-i$ odd (or $i-t$ odd), define $\mathcal{B}_{i,n}(\varphi)$ as
\begin{align*}
\mathcal{B}_{i,n}(\varphi):=\Big\{x\in I_t(b_1,\ldots,b_t)\colon &a_{i}(x)a_{i+1}(x)\ge \varphi(n),~a_n(x)a_{n+1}(x)\ge\varphi(n),
                                                              \\ &a_{n-(2k+1)}(x)a_{n-2k}(x)< \varphi(n) \text{ for } 0\le k<\frac{n-i-1}{2} \Big\}.
\end{align*}
Obviously we have that  $\mathcal{B}_{i,n}(\varphi)\subset A_{n}\cap I_t$ and $\mathcal{B}_{i,n}(\varphi)\cap\mathcal{B}_{j,n}(\varphi)=\emptyset$ for $t<i\neq j<n.$
We then define the desirable subset by
$$\mathcal{B}_n(\varphi)=\mathcal{B}_{n-1,n}(\varphi)\cup\bigcup_{t<i<n-1, ~n-i \text{ is odd}}\mathcal{B}_{i,n}(\varphi).$$

Now we devote to estimating the Lebesgue measure of $\mathcal{B}_n(\varphi).$  By Lemma \ref{lem2.1}, we have
$$\mathcal{L}(\mathcal{B}_{n-1,n}(\varphi))\asymp\frac{1}{\varphi(n)}|I_t|.$$
For $t<i<n-1,$ we write
\begin{align*}
  \mathcal{C}_{i,n}(\varphi):=\bigcup_{a_ia_{i+1}\ge \varphi(n)}\
                                \bigcup_{\substack{a_{n-(2k+1)} a_{n-2k}<\varphi(n)\\ 0< k<\frac{n-i-1}{2}}}
                                 \bigcup_{\substack{1\le a_{n-1}a_n<\varphi(n)\\ a_na_{n+1}\ge \varphi(n)}}  I_{n+1}(b_1,\ldots,b_t,\ldots,a_{i},\ldots,a_{n+1}).
\end{align*}

Applying Lemma \ref{lem2.1} again, we deduce that
\begin{align*}
  \mathcal{L}(\mathcal{C}_{i,n}(\varphi))&=\sum_{a_ia_{i+1}\ge \varphi(n)}\sum_{\substack{a_{n-(2k+1)} a_{n-2k}<\varphi(n)\\
                                             0< k<\frac{n-i-1}{2}}}\sum_{\substack{
                                                                  1\le a_{n-1}a_n<\varphi(n)\\
                                                                  a_na_{n+1}\ge \varphi(n)}}|I_{n+1}|\\
                                         &\gg\sum_{a_ia_{i+1}\ge \varphi(n)}\sum_{\substack{
                                            a_{n-(2k+1)} a_{n-2k}<\varphi(n)\\
                                             0< k<\frac{n-i-1}{2}}}|I_{n-2}|\cdot \frac{\log \varphi(n)}{\varphi(n)}\\
                                         &\gg\frac{\log \varphi(n)+O(1)}{\varphi(n)}\cdot\Big(1-\frac{\log \varphi(n)+O(1)}{\varphi(n)}\Big)^{n-i}\cdot|I_{i-1}|\cdot\frac{\log \varphi(n)}{\varphi(n)}\\
                                         &\asymp \frac{\log^{2} \varphi(n)}{\varphi(n)^{2}}\cdot
                                          \Big(1-\frac{\log \varphi(n)}{2\varphi(n)}\Big)^{n-i}\cdot|I_{i-1}|\ge\frac{\log^{2} \varphi(n)}{\varphi(n)^{2}}
                                          \Big(1-\frac{1}{2n}\Big)^{n-i}|I_{i-1}|,
                                           \end{align*}
where in the last inequality, we used $\varphi(n)\ge n\log \varphi(n)$ as assumed.

We readily check  that
$$\bigcup_{\substack{t<i<n-1\\i-t \text{~odd}}}\mathcal{B}_{i,n}(\varphi)\supset \mathcal{C}_{t+1,n}(\varphi)
\cup \Big(\bigcup_{\substack{t+1<i<n-1\\ ~n-i \text{~odd}}}\,\bigcup_{a_{t+1},\ldots,a_{i-1}\in \N} \mathcal{C}_{i,n}(\varphi)\Big),$$
and thus
\begin{align*}
  \mathcal{L}\left(\bigcup_{\substack{t<i<n-1\\i-t \text{~ odd}}}\mathcal{B}_{i,n}(\varphi)\right)
  &\ge  \mathcal{L}(\mathcal{C}_{t+1,n}(\varphi))+\sum_{\substack{t+1<i<n-1\\n-i \text{~ odd}}}\sum_{a_{t+1},\ldots,a_{i-1}\in \N}\mathcal{L}(\mathcal{C}_{i,n}(\varphi))\\
  &\gg|I_t|\cdot\frac{\log^{2} \varphi(n)}{\varphi(n)^{2}}\cdot\sum_{\substack{t<i<n-1\\n-i \text{~ odd}}}\Big(1-\frac{1}{2n}\Big)^{n-i}\\
  &\gg\frac{n\log^{2} \varphi(n)}{\varphi(n)^{2}}\cdot|I_t|.
  \end{align*}

Hence we have
\begin{align*}
  \mathcal{L}(A_n\cap I_t)\ge \mathcal{L}(\mathcal{B}_n(\varphi))=\mathcal{L}(\mathcal{B}_{n-1,n}(\varphi))+\sum_{\substack{t<i<n-1\\ n-i \text{~ odd}}}\mathcal{L}(\mathcal{B}_{i,n}(\varphi))
                                                                    \gg |I_t|\Big(\frac{1}{\varphi(n)}+\frac{n\log^{2} \varphi(n)}{\varphi(n)^{2}}\Big),
\end{align*}
which completes the proof.
\end{proof}

\begin{lem}\label{measure2}
For $n, m$ large enough with $n> m+2,$ we have
$$\mathcal{L}(A_n\cap A_m\cap I_t)\ll \frac{6\log^{2}\varphi(n)}{\varphi(n)^{2}}\cdot|I_t|
                                       +\left(\frac{n\log^{2} \varphi(n)}{\varphi(n)^{2}}+\frac{1}{\varphi(n)}\right)\cdot\left(\frac{m\log^{2} \varphi(m)}{\varphi(m)^{2}}+\frac{1}{\varphi(m)}\right)\cdot|I_t|.$$
\end{lem}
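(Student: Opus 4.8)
The plan is to decompose the event $A_n \cap A_m$ according to where the two "large products" certifying membership in $A_n$ sit relative to the block of indices $\{m-1, m, m+1\}$ that is constrained by $A_m$. Recall that $A_n$ forces one large product at position $n$ (i.e. $a_n a_{n+1} \ge \varphi(n)$) and another large product at some position $k < n$; similarly $A_m$ forces $a_m a_{m+1} \ge \varphi(m)$ together with a large product at some $j < m$. Since $n > m+2$, the position-$n$ product of $A_n$ is "far" from everything constrained by $A_m$, so that factor contributes a clean $\asymp \varphi(n)^{-1}$ gain by Lemma \ref{lem2.1}(1), independently of the rest. The real bookkeeping is about the auxiliary large product of $A_n$ at position $k < n$: either $k$ is far from $\{m-1,m,m+1\}$, in which case its contribution factors as another independent $\asymp n\log^2\varphi(n)/\varphi(n)^2 + 1/\varphi(n)$-type term and, combined with the $A_m$-cost $\asymp m\log^2\varphi(m)/\varphi(m)^2 + 1/\varphi(m)$, yields the product term in the asserted bound; or $k$ collides with the block $\{m-1,m,m+1\}$ (finitely many choices, say $k \in \{m-2,m-1,m,m+1\}$ up to the two-indices-per-product overlap), which is where the "overlapping" Lemma \ref{lem2.1}(2) estimates $\mathcal L(H_n), \mathcal L(\widetilde H_n) \asymp 1/l$ or $\log l / l$ kick in and produce the leading $\log^2\varphi(n)/\varphi(n)^2$ term with a bounded combinatorial constant (the "$6$").

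Concretely, first I would fix $n$ and $m$ large enough that $\varphi(n), \varphi(m)$ exceed $(\max\{b_1,\dots,b_t\})^2$ and that $n \ge 2t+4$, so that (exactly as in Lemma \ref{measure1}) all constrained indices lie strictly beyond $t$ and the cylinder $I_t$ simply contributes a factor $|I_t|$ by the quasi-multiplicativity $q_{n+m} \asymp q_n q_m$. Then I would write
\[
A_n \cap A_m \cap I_t \subseteq \bigcup_{k} \bigcup_{j} E_{k,j},
\]
where $E_{k,j}$ is the cylinder-union with $a_k a_{k+1} \ge \varphi(n)$, $a_n a_{n+1} \ge \varphi(n)$, $a_j a_{j+1} \ge \varphi(m)$, $a_m a_{m+1} \ge \varphi(m)$, and $1\le k < n$, $1 \le j < m$. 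Summing the cylinder lengths $|I_{n+1}|$ in the order (outermost) $a_n a_{n+1}$, then $a_k a_{k+1}$, then $a_m a_{m+1}$, then $a_j a_{j+1}$, and applying Lemma \ref{lem2.1} at each stage, each non-colliding constraint contributes its own factor and the colliding ones (position $k$ adjacent to or equal to $m, m\pm 1$) contribute via part (2). I would split the sum over $k$ into the "far" range $t < k < m-2$ (giving, after also summing over $j$, the product term) and the $O(1)$ "near" values $k \in \{m-2, m-1, m, m+1, \dots\}$ bounded away from $n$ (giving $\ll \log^2\varphi(n)/\varphi(n)^2 \cdot |I_t|$ per value, hence the $6\log^2\varphi(n)/\varphi(n)^2 |I_t|$ aggregate once one also absorbs the case $j$ near $k$), using $\varphi$ non-decreasing so that $\varphi(n) \ge \varphi(m)$ lets one replace $\varphi(m)$-gains by $\varphi(n)$-gains when convenient.

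The main obstacle I anticipate is the careful case analysis for the "near" configurations: when $k \in \{m-2, m-1, m, m+1\}$ the four constraints $a_k a_{k+1}\ge \varphi(n)$, $a_m a_{m+1}\ge\varphi(m)$ (and possibly $a_j a_{j+1}\ge\varphi(m)$) can share one or two partial quotients, so one cannot naively multiply independent factors and must instead invoke the $H_n$/$\widetilde H_n$-type computations — and do so with the stronger threshold $\varphi(n)$ rather than $\varphi(m)$ on the overlapping block — to see that each such configuration still costs at most $\asymp \log^2\varphi(n)/\varphi(n)^2$. Checking that the total number of such configurations is an absolute constant (yielding the explicit $6$, or at worst some fixed integer) and that none of them accidentally produces a larger-order term is the delicate part; everything else is routine summation of $\sum 1/a^2 \asymp 1/l$ and $\sum_{a<l} 1/(la) \asymp \log l / l$ as in the proof of Lemma \ref{lem2.1}.
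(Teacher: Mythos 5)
Your organizing principle --- decompose by whether the $A_n$-auxiliary index $k$ collides with the indices constrained by $A_m$, using Lemma~\ref{lem2.1}(2) for the colliding cases and factoring through independent $\asymp 1/l$, $\asymp \log l / l$ contributions otherwise --- is the right mechanism and is in the same spirit as the paper's argument. The paper, however, organizes the cases differently: it decomposes by the position of $k$ relative to $m$ into three sets $U_1$ (the case $k<m$), $U_2$ (the case $k=m$), $U_3$ (the case $m<k<n$), and in $U_1,U_2$ it does \emph{not} sum over the $A_m$-auxiliary index $j$ at all, since $a_ka_{k+1}\ge\varphi(n)\ge\varphi(m)$ with $k\le m$ already supplies the required second $\varphi(m)$-witness; the $j$-sum only appears in $U_3$. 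This absorption is what makes those two pieces contribute only $O(\log^2\varphi(n)/\varphi(n)^2)\,|I_t|$ rather than a product term.

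There is a genuine gap in your detailed range bookkeeping. In the introductory paragraph you correctly say that when $k$ is far from the $A_m$-block its contribution factors as an $\asymp n\log^2\varphi(n)/\varphi(n)^2 + 1/\varphi(n)$-type term, but in the concrete split you then take the ``far'' range to be only $t<k<m-2$ and the ``near'' values to be the $O(1)$ indices around $m$. Since $k$ really ranges over all of $t<k<n$, this leaves the entire block $m+2\le k\le n-1$ unaccounted for; it is precisely that block which, together with the $j$-sum over $t<j<m$, produces the dominant factor $\tfrac{n\log^2\varphi(n)}{\varphi(n)^2}\bigl(\tfrac{m\log^2\varphi(m)}{\varphi(m)^2}+\tfrac1{\varphi(m)}\bigr)|I_t|$ (the paper's generic part of $U_3$). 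The range $t<k<m-2$ only contributes $\asymp m$ values of $k$, so it cannot supply the $n$-factor; and if you meant to absorb everything $\ge m-2$ into the ``near'' set, that set is no longer $O(1)$ and each of its $\asymp n$ members contributing $\ll\log^2\varphi(n)/\varphi(n)^2\,|I_t|$ would overshoot by a factor of $n$. You also omit the boundary collision $k=n-1$ (where $\{k,k+1\}$ overlaps $\{n,n+1\}$), which must be treated with an $H_n$-type estimate at the stronger threshold $\varphi(n)$ and is the source of the $\tfrac1{\varphi(n)}\bigl(\tfrac{m\log^2\varphi(m)}{\varphi(m)^2}+\tfrac1{\varphi(m)}\bigr)$ piece of the product term. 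Finally, the paper's bound on $U_1,U_2$ silently relies on the standing reduction $\varphi(n)\ge n\log\varphi(n)$ (made at the start of the divergence subsection) to absorb a factor of $m\log\varphi(m)/\varphi(m)\le 1$; you mention monotonicity of $\varphi$ but not this reduction, which you would need to close the estimate without changing the form of the bound.
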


\begin{proof}
Note that
\begin{align*}
A_n\cap A_m\cap I_t=\left\{x\in I_t\colon
\begin{array}{l}
   \exists 1\le l< m,~a_{l}(x)a_{l+1}(x)\ge \varphi(m), ~a_{m}(x)a_{m+1}(x)\ge \varphi(m) \\
    \exists 1\le k< n,~a_{k}(x)a_{k+1}(x)\ge \varphi(n), ~a_{n}(x)a_{n+1}(x)\ge \varphi(n)
  \end{array}\right\}.
\end{align*}
Since $\varphi(m)\le \varphi(n),$ the above set is contained in the union of the following three sets which are defined according to the relationship between $k$ and $m\colon$
\begin{itemize}
\item [$\bullet$]   for $k<m,$
$$U_1=\{x\in I_t\colon  \exists k< m,~a_{k}(x)a_{k+1}(x)\ge \varphi(n),~a_{m}(x)a_{m+1}(x)\ge \varphi(m), ~a_{n}(x)a_{n+1}(x)\ge \varphi(n)\};$$
\item [$\bullet$]  for $k=m,$
$$U_2=\{x\in I_t\colon  \exists l< m,~a_{l}(x)a_{l+1}(x)\ge \varphi(m),~a_{m}(x)a_{m+1}(x)\ge \varphi(n),~a_{n}(x)a_{n+1}(x)\ge \varphi(n)\};$$
\item [$\bullet$]  for $k>m,$
$$U_3=\left\{x\in I_t\colon  \exists l<m<k< n,
                                               \begin{array}{c}
                                                 a_{l}(x)a_{l+1}(x)\ge \varphi(m),~a_{m}(x)a_{m+1}(x)\ge \varphi(m), \\
                                                  a_{k}(x)a_{k+1}(x)\ge \varphi(n),~a_{n}(x)a_{n+1}(x)\ge \varphi(n)\\
                                               \end{array}
                                             \right\}.$$
\end{itemize}

Now we estimate the Lebesgue measures of these three sets respectively. By Lemma \ref{lem2.1}, we have that
\begin{align*}
  \mathcal{L}(U_1)\le &\sum_{k=1}^{m-2}\mathcal{L}\{x\in I_t\colon a_k(x)a_{k+1}(x)\ge \varphi(n), a_{m}(x)a_{m+1}(x)\ge \varphi(m), a_n(x)a_{n+1}(x)\ge \varphi(n)\}\\&+\mathcal{L}\{x\in I_t\colon a_{m-1}(x)a_{m}(x)\ge \varphi(n), a_{m}(x)a_{m+1}(x)\ge \varphi(m), a_n(x)a_{n+1}(x)\ge \varphi(n)\}\\ \ll &\frac{m\log \varphi(m)}{\varphi(m)}\cdot\frac{\log^{2}\varphi(n)}{\varphi(n)^{2}}\cdot|I_t|+\frac{\log^{2}\varphi(n)}{\varphi(n)^{2}}\cdot|I_t|
  \le \frac{2\log^{2}\varphi(n)}{\varphi(n)^{2}}\cdot|I_t|.
\end{align*}
Likewise, we deduce that
\begin{align*}
  \mathcal{L}(U_2)\ll& \frac{\log \varphi(n)}{\varphi(n)}\sum_{l=1}^{m-1}\mathcal{L}\{x\in I_t\colon  a_{l}(x)a_{l+1}(x)\ge \varphi(m),~a_{m}(x)a_{m+1}(x)\ge \varphi(n)\}\\
  =&\frac{\log \varphi(n)}{\varphi(n)}\sum_{l=1}^{m-2}\mathcal{L}\{x\in I_t\colon  a_{l}(x)a_{l+1}(x)\ge \varphi(m),~a_{m}(x)a_{m+1}(x)\ge \varphi(n)\}\\
  &+\frac{\log \varphi(n)}{\varphi(n)}\mathcal{L}\{x\in I_t\colon  a_{m-1}(x)a_{m}(x)\ge \varphi(m),~a_{m}(x)a_{m+1}(x)\ge \varphi(n)\}\\
  \ll&m\cdot\frac{\log \varphi(n)}{\varphi(n)}\cdot\frac{\log \varphi(m)}{\varphi(m)}\cdot\frac{\log \varphi(n)}{\varphi(n)}\cdot |I_t|+\frac{\log^{2} \varphi(n)}{\varphi(n)^{2}}\cdot |I_t|\le \frac{2\log^{2}\varphi(n)}{\varphi(n)^{2}}\cdot|I_t|.
\end{align*}
The estimation for $\mathcal{L}(U_3)$ is fulfilled by considering three cases according as that $k$ is equal to $m+1$, or $n-2$, or in between.
Precisely,
\begin{align*}
 \mathcal{L}(U_3)\le&\sum_{l=t+1}^{m-1}\sum_{k=m+1}^{n-1}\mathcal{L}\left\{x\in I_t\colon
                                               \begin{array}{c}
                                                 a_{l}(x)a_{l+1}(x)\ge \varphi(m),~a_{m}(x)a_{m+1}(x)\ge \varphi(m), \\
                                                  a_{k}(x)a_{k+1}(x)\ge \varphi(n),~a_{n}(x)a_{n+1}(x)\ge \varphi(n)\\
                                               \end{array}
                                             \right\}\\
                                             =&\sum_{l=t+1}^{m-1}\sum_{k=m+2}^{n-2}\mathcal{L}\left\{x\in I_t\colon
                                               \begin{array}{c}
                                                 a_{l}(x)a_{l+1}(x)\ge \varphi(m),~a_{m}(x)a_{m+1}(x)\ge \varphi(m), \\
                                                  a_{k}(x)a_{k+1}(x)\ge \varphi(n),~a_{n}(x)a_{n+1}(x)\ge \varphi(n)\\
                                               \end{array}
                                             \right\}\\  &+\sum_{l=t+1}^{m-1}\mathcal{L}\left\{x\in I_t\colon
                                               \begin{array}{c}
                                                 a_{l}(x)a_{l+1}(x)\ge \varphi(m),~a_{m}(x)a_{m+1}(x)\ge \varphi(m), \\
                                                  a_{m+1}(x)a_{m+2}(x)\ge \varphi(n),~a_{n}(x)a_{n+1}(x)\ge \varphi(n)\\
                                               \end{array}
                                             \right\}\\&+\sum_{l=t+1}^{m-1}\mathcal{L}\left\{x\in I_t\colon
                                               \begin{array}{c}
                                                 a_{l}(x)a_{l+1}(x)\ge \varphi(m),~a_{m}(x)a_{m+1}(x)\ge \varphi(m), \\
                                                  a_{n-1}(x)a_{n}(x)\ge \varphi(n),~a_{n}(x)a_{n+1}(x)\ge \varphi(n)\\
                                               \end{array}
                                             \right\}\\
                                             \ll&|I_t|\Big[\frac{n\log^{2} \varphi(n)}{\varphi(n)^{2}}\Big(\frac{m\log^{2}\varphi(m)}{\varphi(m)^{2}}
                                             +\frac{1}{\varphi(m)}\Big) +
                                             \frac{2\log^{2}\varphi(n)}{\varphi(n)^{2}}
                                             +\frac{1}{\varphi(n)}\Big(\frac{m\log^{2}\varphi(m) }{\varphi(m)^{2}}+\frac{1}{\varphi(n)}\Big)\Big]\\
                                             =& \frac{2\log^{2}\varphi(n)}{\varphi(n)^{2}}\cdot|I_t|
                                             +\left(\frac{n\log^{2} \varphi(n)}{\varphi(n)^{2}}+\frac{1}{\varphi(n)}\right)\cdot\left(\frac{m\log^{2} \varphi(m)}{\varphi(m)^{2}}+\frac{1}{\varphi(m)}\right)\cdot|I_t|.
\end{align*}
\end{proof}

Having Lemmas \ref{measure1} and \ref{measure2} in hand,  we finish the proof as follows.
From Lemma \ref{measure1} we have that
$\sum_{n=1}^{\infty}\mathcal{L}(A_n\cap I_t)=\infty.$
On the other hand, by Lemma \ref{measure2}, we deduce that
\begin{align*}
\sum_{1\le m<n\le N}\mathcal{L}(A_m\cap A_n\cap I_t)
&= \sum_{\substack{
            1\le m<n\le N\\
            m=n-1, n-2}}\mathcal{L}(A_m\cap A_n\cap I_t)+\sum_{1\le m<n-2\le N}\mathcal{L}(A_m\cap A_n\cap I_t)\\
&\ll \sum_{n=1}^{N}\mathcal{L}(A_n\cap I_t) +\Big(\sum_{n=1}^{N}\mathcal{L}(A_n\cap I_t)\Big)^{2}|I_t|^{-1}.
 \end{align*}
Thus Chung-Erd\"{o}s inequality yields
$$\mathcal{L}(\mathcal{F}_2(\varphi)\cap I_t)
=\mathcal{L}(\limsup_{n\to\infty} A_n\cap I_t)
\ge \limsup_{N\to \infty}\frac{(\sum_{1\le n\le N}\mathcal{L}(A_n\cap I_t))^2}{\sum_{1\le m\ne n\le N}\mathcal{L}(A_m\cap A_n\cap I_t)}\gg |I_t|.$$
We conclude that $\mathcal{F}_2(\varphi)$ is of full Lebesgue measure in [0,1] by Knopp's lemma.

\section{Proof of Theorem \ref{thm2}: $1<B<\infty$}
We remark that  when $1<B<\infty$,
$$\dim_{H}\mathcal{F}_2(\varphi)=\dim_{H}\mathcal{F}_2(  n\mapsto B^{n}).$$
Therefore, we can simply assume that $\varphi(n)=B^{n}$ and rewrite the set $\mathcal{F}_2(\varphi)$ as
\begin{align*}
\mathcal{F}_2(B)=\Big\{x\in[0,1)\colon  \exists 1\le k< n,  ~&a_{k}(x)a_{k+1}(x)\ge B^{n},
                                                            \\&a_{n}(x)a_{n+1}(x)\ge B^{n} \text{ for infinitely many } n\in \N  \Big\}.
\end{align*}

The aim is to show $\dim_{H}{\mathcal{F}}_2(B)=s_B.$ The proof falls naturally into two parts$\colon$  the upper bound and the lower bound.

\subsection{The upper bound for $\mathcal{F}_2(B)$}
We see that  $\mathcal{F}_2(B)$ is the union of the following two sets$\colon$
\begin{align*}
  \mathfrak{F}_{1}(B)&=\{x\in[0,1)\colon a_{n-1}(x)a_n(x)\ge B^{n},~a_n(x)a_{n+1}(x)\ge B^{n}\text{ for infinitely many } n\in \N\}\\
  \mathfrak{F}_2(B)&=\Big\{x\in[0,1)\colon \begin{array}{c}
                                                  \exists 1\le k< n-1, ~ a_{k}(x)a_{k+1}(x)\ge B^{n}, \\
                                                   a_{n}(x)a_{n+1}(x)\ge B^{n} \text{ for infinitely many } n\in \N \\
                                                 \end{array} \Big\}.
                                                 \end{align*}
 We establish the upper bounds for the dimensions of these two sets.

\begin{lem}\label{up1}
We have that $\dim_{H}\mathfrak{F}_{1}(B)\le s_B.$
\end{lem}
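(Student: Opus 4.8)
The set $\mathfrak{F}_1(B)$ is a limsup set built from the events "both $a_{n-1}a_n\ge B^n$ and $a_na_{n+1}\ge B^n$," so the natural strategy is a covering argument: for each $n$ cover $\mathfrak{F}_1(B)$ by the cylinders $I_{n+1}(a_1,\dots,a_{n+1})$ whose partial quotients satisfy the two constraints, and estimate the resulting $s$-dimensional Hausdorff sum. Since $\mathfrak{F}_1(B)\subset\bigcup_{n\ge N}H_n^{(B)}$ where $H_n^{(B)}$ is the set $H_n$ of Lemma \ref{lem2.1} with $l=B^n$, I would first record, as in the preliminaries, that the constraint $a_{n-1}a_n\ge B^n,\ a_na_{n+1}\ge B^n$ forces (after splitting on the size of $a_n$) either $a_n\ge B^n$, or $1\le a_n< B^n$ with $a_{n-1}\ge B^n/a_n$ and $a_{n+1}\ge B^n/a_n$; this is exactly the decomposition already used to prove $\mathcal L(H_n)\asymp B^{-n}$.

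The key step is then to bound $\sum |I_{n+1}(a_1,\dots,a_{n+1})|^s$ over these admissible blocks. Using $|I_{n+1}|\asymp q_{n+1}^{-2}$ and the quasi-multiplicativity $q_{n+1}(a_1,\dots,a_{n+1})\asymp q_{n-2}(a_1,\dots,a_{n-2})\cdot q_3(a_{n-1},a_n,a_{n+1})$, the sum factors as
\begin{align*}
\sum_{a_1,\dots,a_{n+1}} |I_{n+1}|^{s}
&\ll \Big(\sum_{a_1,\dots,a_{n-2}} q_{n-2}^{-2s}\Big)\cdot \Sigma_n(s),\\
\Sigma_n(s)&:=\sum_{a_{n-1}a_n\ge B^n,\ a_na_{n+1}\ge B^n} q_3(a_{n-1},a_n,a_{n+1})^{-2s}.
\end{align*}
A direct computation of $\Sigma_n(s)$, splitting on $a_n\ge B^n$ versus $a_n< B^n$ and using $q_3\asymp a_{n-1}a_na_{n+1}$ together with $\sum_{a\ge N}a^{-2s}\asymp N^{1-2s}$ for $s>1/2$, gives $\Sigma_n(s)\asymp B^{-n(3s-1-s^2)}$ — the quadratic exponent $3s-1-s^2$ arising precisely because two of the three free variables in the block are coupled to the middle one. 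Hence $\sum |I_{n+1}|^s \ll B^{-n(3s-1-s^2)}\sum_{(a_1,\dots,a_{n-2})\in\N^{n-2}} q_{n-2}^{-2s} = f_{n-2}(s)\cdot B^{2(3s-1-s^2)\log B/\log B}\cdots$, i.e. up to bounded factors this is $f_n(s)$ (in the $\mathcal A=\N$ version), whose exponential growth rate is $\mathsf P(T,-(3s-1-s^2)\log B-s\log|T'|)$.

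For $s>s_B$ we have $\mathsf P(T,-(3s-1-s^2)\log B-s\log|T'|)<0$, so $\sum_n \sum |I_{n+1}|^s<\infty$; since the diameters of the covering cylinders go to $0$, this yields $\mathcal H^s(\mathfrak F_1(B))=0$, hence $\dim_H\mathfrak F_1(B)\le s$, and letting $s\downarrow s_B$ gives the claim. One technical point to handle with care: the pressure function is defined as an infinite-alphabet limit and the sum $\sum_{(a_1,\dots,a_{n-2})\in\N^{n-2}}q_{n-2}^{-2s}$ converges only for $s>1/2$, which is consistent since $s_B\ge 1/2$ (Corollary \ref{corollary}); for $s$ near $1/2$ one should either invoke Lemma \ref{approximation} to pass through finite subsystems $\mathcal A_M$ and take $M\to\infty$, or simply note that $f_n(s)$ is exactly the object whose growth rate defines the pressure so the comparison is literal. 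The main obstacle is thus the careful evaluation of the local block sum $\Sigma_n(s)$ and verifying that the coupling of $a_{n-1},a_{n+1}$ to $a_n$ produces exactly the exponent $3s-1-s^2$ rather than, say, $2s^2$ or $2-3s+s^2$; everything else is a routine Hausdorff-measure covering estimate.
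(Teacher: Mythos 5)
Your strategy of covering $\mathfrak F_{1,n}$ directly by the admissible $(n+1)$-cylinders $I_{n+1}(a_1,\dots,a_{n+1})$ does not reproduce the exponent $3s-1-s^2$; the claimed evaluation $\Sigma_n(s)\asymp B^{-n(3s-1-s^2)}$ is a miscalculation. Carrying out your own outline (split on $a_n\ge B^n$ versus $a_n<B^n$, use $q_3\asymp a_{n-1}a_na_{n+1}$ and $\sum_{a\ge N}a^{-2s}\asymp N^{1-2s}$ for $s>1/2$) gives, with $l=B^n$: on $\{a_n\ge l\}$ the three sums decouple and contribute $\asymp l^{1-2s}$; on $\{a_n<l\}$ one gets
\begin{equation*}
\sum_{1\le a_n<l}a_n^{-2s}\,(l/a_n)^{2(1-2s)}\;=\;l^{2-4s}\sum_{1\le a_n<l}a_n^{2s-2}\;\asymp\;l^{2-4s}\cdot l^{2s-1}\;=\;l^{1-2s},
\end{equation*}
since $2s-2\in(-1,0)$ for $s\in(1/2,1)$. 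Hence $\Sigma_n(s)\asymp B^{-n(2s-1)}$, not $B^{-n(3s-1-s^2)}$. Because $3s-1-s^2-(2s-1)=s(1-s)>0$ on $(0,1)$, the exponent $2s-1$ is strictly smaller, so your cover only yields $\mathcal H^s(\mathfrak F_1(B))<\infty$ for $s$ past the zero of $\mathsf P(T,-(2s-1)\log B-s\log|T'|)$, which is strictly larger than $s_B$. The proof therefore does not reach the claimed upper bound.

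The missing idea is that the cover by smallest-order cylinders is not efficient enough: for $s<1$ one gains by merging many $(n+1)$-cylinders into a single coarser interval, since $\bigl|\bigcup_i I_i\bigr|^s\le\sum_i|I_i|^s$. The paper's proof exploits exactly this: it fixes a cutoff $\alpha=B^{1-s}$ and covers the three regimes (roughly, $a_{n-1}\le\alpha^n$; $a_{n-1}>\alpha^n$ with $a_n\ge B^n/\alpha^n$; and $a_{n-1}>\alpha^n$ with $a_n\le B^n/\alpha^n$) by intervals of order $n-1$, $n-1$, and $n$ respectively, obtained by collapsing the constrained tail of partial quotients into one union. Each of the three resulting $s$-sums then evaluates, after substituting $\alpha=B^{1-s}$, to $\asymp B^{-n(3s-1-s^2)}q_{n-2}^{-2s}$, which is precisely what matches the pressure function. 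Without this mixed-order cover (or some equivalent device), the argument cannot recover the factor $B^{ns(1-s)}$ that separates $2s-1$ from $3s-1-s^2$.
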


\begin{proof}
Fix $\varepsilon>0$ and let $s=s_B+2\varepsilon$ and $\alpha=B^{1-s}.$
We write $ \mathfrak{F}_{1}(B)=\limsup \mathfrak F_{1,n}$ with
$$\mathfrak F_{1,n}=\{x\in[0,1)\colon a_{n-1}(x)a_n(x)\ge B^{n},~a_n(x)a_{n+1}(x)\ge B^{n}\}. $$
We first cover $\mathfrak F_{1,n}$ by the   following two sets:
\begin{align*}
  \mathfrak{F}_{11,n}& = \{x\in[0,1)\colon a_{n-1}(x)>\alpha^{n}, a_{n-1}(x)a_n(x)\ge B^{n}, a_{n}(x)a_{n+1}(x)\ge B^{n}\},\\
  \mathfrak{F}_{12,n} & =\Big\{x\in[0,1)\colon 1\le a_{n-1}(x)< \alpha^{n},
                                          a_{n-1}(x)a_n(x)\ge B^{n}\Big\}.
\end{align*}
We proceed to cover  $\mathfrak{F}_{11,n}$ by the following sets:
\begin{align*}
  \mathfrak{F}_{111,n}& =\left\{x\in[0,1)\colon a_{n-1}(x)>\alpha^{n}, \frac{B^{n}}{a_{n-1}(x)}\le a_n(x)\le \frac{B^{n}}{\alpha^n}, a_{n+1}(x)\ge\frac{B^n}{a_n(x)}\right\},\\
  \mathfrak{F}_{112,n}& =\left\{x\in[0,1)\colon a_{n-1}(x)>\alpha^{n},  a_n(x)\ge \frac{B^{n}}{\alpha^n}\right\}.
\end{align*}
We continue to cover  $\mathfrak{F}_{111,n}$ and $\mathfrak{F}_{112,n}$ by the collections:
\begin{align*}
 &\Big\{J_{n}(a_1,\ldots,a_{n})=\bigcup_{a_{n+1}\ge \frac{B^n}{a_n}}I_{n+1}(a_1,\ldots,a_{n+1})\colon  a_1,\ldots,a_{n-2}\in\mathbb N, a_{n-1}>\alpha^n, 1\le a_n\le\frac{B^n}{\alpha^n}\Big\},\\
 &\Big\{J'_{n-1}(a_1,\ldots,a_{n-1})=\bigcup_{a_{n}\ge \frac{B^n}{\alpha^n}}I_{n}(a_1,\ldots,a_{n})\colon  a_1,\ldots,a_{n-2}\in\mathbb N, a_{n-1}>\alpha^n\Big\},
\end{align*}
respectively; cover $\mathfrak{F}_{12,n}$ by
\begin{equation*}
\Big\{J''_{n-1}(a_1,\ldots,a_{n-1})=\bigcup_{a_{n}\ge \frac{B^{n}}{a_{n-1}}}I_n(a_1,\ldots, a_{n})\colon
 a_1,\ldots,a_{n-2}\in \mathbb{N}, 1\le a_{n-1}\le \alpha^{n}\Big\}.
\end{equation*}
In  the light of
\begin{equation*}
|J_n|\asymp\frac{1}{B^na_nq_{n-1}^{2}},~
|J'_{n-1}|\asymp\frac{\alpha^n}{B^{n}q_{n-1}^{2}}, ~|J''_{n-1}|\asymp\frac{1}{B^{n}a_{n-1}q_{n-2}^{2}},
\end{equation*}
we estimate the $s$-volume of the above cover of  $\mathfrak{F}_{1,n}\colon$
\begin{align*}
  &\sum_{a_1\ldots,a_{n-2}\in \mathbb{N}}\left[\sum_{a_{n-1}>\alpha^n}\sum_{1\le a_n\le \frac{B^n}{\alpha^n}}\Big(\frac{1}{B^{n}a_{n}q_{n-1}^{2}}\Big)^{s}
  +\sum_{a_{n-1}>\alpha^n}\Big(\frac{\alpha^n}{B^nq_{n-1}^{2}}\Big)^{s}
  +\sum_{1\le a_{n-1}\le \alpha^n}\Big(\frac{1}{B^{n}a_{n-1}q_{n-2}^{2}}\Big)^{s}\right]\\
  &\asymp\sum_{a_1\ldots,a_{n-2}\in \mathbb{N}}\left[B^{n(1-2s)}\alpha^{-ns}\cdot\frac{1}{q_{n-2}^{2s}}
                                                     +B^{-ns}\alpha^{n(1-s)}\cdot\frac{1}{q_{n-2}^{2s}}\right]
  \ \  (\text{integrating on } a_{n-1}, a_n)\\
  &\asymp\sum_{a_1\ldots,a_{n-2}\in \mathbb{N}}\frac{1}{B^{n(3s-1-s^2)}q_{n-2}^{2s}}.
\end{align*}
Hence, we obtain that
\begin{align*}
  \mathcal{H}^{s}( \mathfrak{F}_{1}(B))
  & \le\liminf_{N\to\infty}\sum_{n=N}^{\infty}\sum_{a_1,\ldots,a_{n-2}\in \mathbb{N}}\frac{1}{B^{n(3s-1-s^2)}q_{n-2}^{2s}}\\
  &\le \liminf_{N\to\infty}\sum_{n=N}^{\infty}\sum_{a_1,\ldots,a_{n-2}\in \mathbb{N}}
 \frac{1}{B^{[3(s_B+\varepsilon)-1-(s_B+\varepsilon)^2]}q_{n-2}^{2(s_B+\varepsilon)}}\cdot B^{-3n\varepsilon}\\
 &\ll\liminf_{N\to\infty}\sum_{n=N}^{\infty}B^{-3n\varepsilon}<\infty,
\end{align*}
which follows that $\dim_{H} \mathfrak{F}_{1}(B)\le s_B$.
\end{proof}

\begin{lem}\label{up2} We have that
$\dim_{H} \mathfrak{F}_{2}(B)\le s_B.$
\end{lem}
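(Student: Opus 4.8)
The plan is to decompose $\mathfrak{F}_2(B)$ further according to the size of $a_{n-1}$, mirroring the structure used in Lemma \ref{up1} for the ``diagonal'' block at position $n$, while simultaneously splitting off the block $a_k a_{k+1} \ge B^n$ occurring at some earlier position $k < n-1$. Fix $\varepsilon>0$, set $s = s_B + 2\varepsilon$ and $\alpha = B^{1-s}$. Writing $\mathfrak{F}_2(B) = \limsup_n \mathfrak{F}_{2,n}$ with
\begin{equation*}
\mathfrak{F}_{2,n} = \bigcup_{1\le k < n-1}\{x\in[0,1)\colon a_k(x)a_{k+1}(x)\ge B^n,\ a_n(x)a_{n+1}(x)\ge B^n\},
\end{equation*}
I would first handle the block at position $n$ exactly as in Lemma \ref{up1}: cover the part where $a_n(x) \ge B^n/\alpha^n$ by cylinders $I_n(a_1,\ldots,a_n)$ of level $n$, and the part where $a_n(x) < B^n/\alpha^n$ (so that $a_{n+1}(x)\ge B^n/a_n(x)$ with $a_n$ in a controlled range) by the sets $J_n$ of level $n+1$. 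The length estimates $|J_n|\asymp (B^n a_n q_{n-1}^2)^{-1}$ and the analogous one for the level-$n$ cylinders are already available. The effect of integrating out $a_{n-1}$ and $a_n$ over these ranges, just as in the previous lemma, replaces the constraint at position $n$ by a factor $\asymp B^{-n(2s-1-\ldots)}$ times $q_{n-2}^{-2s}$-type weights; the precise bookkeeping gives a factor producing the exponent $-(3s-1-s^2)$ after also accounting for the earlier block.

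Next I would treat the earlier block: for each fixed $k$ with $1\le k < n-1$, cover the constraint $a_k a_{k+1}\ge B^n$ by summing over all admissible $(a_k, a_{k+1})$, which by the standard estimate (Lemma \ref{lem2.1}(1), or directly $\sum_{a_k a_{k+1}\ge L} (a_k a_{k+1})^{-2s} \ll L^{1-2s}\log L$ for $s>1/2$) contributes, after raising to the $s$-th power in the Hausdorff sum, a factor $\ll B^{-n(2s-1)}\cdot n$ relative to the cylinder sum without this constraint (the $\log B^n \asymp n$ factor is harmless against the geometric decay). Summing over the $n-2$ possible values of $k$ only costs another factor $n$. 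Combining the two blocks, the $s$-dimensional Hausdorff sum of the cover of $\mathfrak{F}_{2,n}$ is
\begin{equation*}
\ll n^2\sum_{a_1,\ldots\in\mathbb{N}} B^{-n(3s-1-s^2)} q_{\bullet}^{-2s} \asymp n^2 \cdot \big(f\text{-type sum}\big),
\end{equation*}
and since $s = s_B+2\varepsilon$ strictly exceeds $s_B$, the definition of $s_B$ via the pressure function forces $\mathsf{P}(T, -(3(s_B+\varepsilon)-1-(s_B+\varepsilon)^2)\log B - (s_B+\varepsilon)\log|T'|) < 0$, so the inner sum decays like $B^{-cn\varepsilon}$ for some $c>0$; this beats the polynomial factor $n^2$, making $\sum_n(\cdots)<\infty$. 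Hence $\mathcal{H}^s(\mathfrak{F}_2(B)) = 0$ and $\dim_H\mathfrak{F}_2(B)\le s_B+2\varepsilon$; letting $\varepsilon\to 0$ finishes the proof.

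The main obstacle I anticipate is organizing the cover so that the two independent constraints (one at the roving position $k$, one at position $n$) are integrated out cleanly without double-counting and without the level-$(n+1)$ versus level-$n$ mismatch causing trouble — in particular, ensuring that when $k = n-2$ the two blocks nearly overlap and the estimate $|I_{n-1}|\cdot(\log B^n/B^n)$-type bounds still apply, and that the polynomial-in-$n$ losses from summing over $k$ and from the $\log B^n$ factors are genuinely absorbed by the strict inequality $s > s_B$. The pressure/exponent computation that identifies $-(3s-1-s^2)\log B$ as the right potential is the conceptual heart, but it is already packaged in the statement of Theorem \ref{thm2} and in the displayed $f_n(s)$ computation of Section \ref{pressure}, so the real work here is purely the covering combinatorics.
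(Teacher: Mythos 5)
Your plan for covering the earlier block $a_k a_{k+1}\ge B^n$ (summing over the roving $k$ and using the $\log$-weighted estimate for the pair $(a_k,a_{k+1})$) matches the paper, and your remark that the polynomial factors $n$ and $\log B^n$ are absorbed by the strict inequality $s>s_B$ is also correct. The difficulty is in how you cover the block at position $n$, and it is not a bookkeeping nuisance but a genuine gap.

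You propose to cover the subset where $a_n\ge B^n/\alpha^n=B^{ns}$ by the individual level-$n$ cylinders $I_n(a_1,\ldots,a_n)$. The $s$-volume of that family, after summing $a_n\ge B^{ns}$, contributes a factor $\asymp B^{ns(1-2s)}$; combining with the factor $\asymp nB^{n(1-2s)}$ from the constraint $a_ka_{k+1}\ge B^n$ gives an exponent $n(1-2s)(1+s)=n(1-s-2s^2)$ in place of the target $n(1-3s+s^2)=-n(3s-1-s^2)$. The difference is $n\,s(2-3s)$, which is strictly positive for $1/2<s<2/3$. Since $s_B\to 1/2$ as $B\to\infty$ (Corollary~\ref{corollary}), for $B$ large enough and $s=s_B+2\varepsilon$ with $\varepsilon$ small, the cover's $s$-volume sum $\sum_n n^2 B^{n(1-s-2s^2)}\sum q_{n-3}^{-2s}$ diverges (the geometric gain $B^{-3n\varepsilon}$ from the pressure inequality cannot beat the geometric loss $B^{n\,s(2-3s)}$). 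So the argument does not establish $\mathcal{H}^s(\mathfrak{F}_2(B))<\infty$ for $s$ near $s_B$, and the claimed upper bound is not proved.

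What the paper does instead — and what is also done in Lemma~\ref{up1} for $\mathfrak{F}_{112,n}$, which you are citing as your template — is to cover the ``large $a_n$'' part by a \emph{single} level-$(n-1)$ fundamental interval $J_{n-1}=\bigcup_{a_n\ge\text{threshold}}I_n(a_1,\ldots,a_n)$ (a genuine interval because, by Proposition~\ref{cylinder}, these sub-cylinders are contiguous). Because $t\mapsto t^s$ is strictly concave on $[0,\infty)$ for $0<s<1$, replacing the union by the family of its pieces strictly increases the $s$-volume, so the union cover is the right one; it replaces the factor $B^{ns(1-2s)}$ by $B^{-ns^2}$ (or, with the paper's threshold $\alpha^n$, by $\alpha^{-ns}$), and one then gets the correct exponent $-(3s-1-s^2)$ or better. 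Note also that the paper actually splits on $a_n\gtrless\alpha^n$ rather than on $a_n\gtrless B^n/\alpha^n$; your threshold would also work provided the union cover is used, but the individual-cylinder cover fails for either threshold.
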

\begin{proof}We follow the notation of the preceding proof. Write
\begin{align*}
 \mathfrak{F}_{2,n,k} & =\{x\in[0,1)\colon
                                                   a_{k}(x)a_{k+1}(x)\ge B^{n},
                                                   a_{n}(x)a_{n+1}(x)\ge B^{n}\},\\
 \mathfrak{F}_{21,n,k} & = \{x\in[0,1)\colon a_{k}(x)a_{k+1}(x)\ge B^{n}, a_{n}(x)\ge \alpha^{n}\},\\
  \mathfrak{F}_{22,n,k} & =\Big\{x\in[0,1)\colon a_{k}(x)a_{k+1}(x)\ge B^{n}, 1\le a_{n}(x)<\alpha^{n},  a_{n+1}(x)\ge\frac{B^{n}}{a_n(x)}\Big\}.
\end{align*}
Thus, $\mathfrak{F}_2(B) =\bigcap_{N=1}^{\infty}\bigcup_{n=N}^{\infty}\bigcup_{k=1}^{n-2}\mathfrak{F}_{2,n,k} $ with $\mathfrak{F}_{2,n,k} \subset\mathfrak{F}_{21,n,k}\cup  \mathfrak{F}_{22,n,k}$.
We proceed to cover $\mathfrak{F}_{21,n,k}$ by the collection:
\begin{equation*}
 \Big\{J_{n-1}(a_1,\ldots,a_{n-1})=\bigcup_{a_{n}\ge \alpha^n}I_{n}(a_1,\ldots,a_{n})\colon  a_1,\ldots,a_{n-1}\in\mathbb N, a_{k}a_{k+1}\ge B^n\Big\};
\end{equation*}
cover $\mathfrak{F}_{22,n,k}$ by
\begin{equation*}
  \Big\{J'_{n}(a_1,\ldots,a_{n})=\bigcup_{a_{n+1}\ge \frac{B^n}{a_n}}I_{n+1}(a_1,\ldots,a_{n+1})\colon  a_1,\ldots,a_{n}\in\mathbb N, a_{k}a_{k+1}\ge B^n, 1\le a_n\le \alpha^n\Big\}.
\end{equation*}
In the light of
\begin{equation*}
|J_{n-1}|\asymp\frac{1}{\alpha^nq_{n-1}^{2}},~|J'_{n}|\asymp\frac{1}{B^{n}a_nq_{n-1}^{2}}.
\end{equation*}

Therefore, the $s$-volume of the cover of $ \mathfrak{F}_{21,n,k}$ can be
estimate as
\begin{equation*}
\sum_{\substack{
            a_1,\ldots,a_{n-1}\in \N\\
            a_ka_{k+1}\ge B^{n} }} \Big(\frac{1}{\alpha^{n}q_{n-1}^{2}}\Big)^{s}
                                  \asymp n\sum_{a_1,\ldots,a_{n-3}\in \N}B^{n(1-2s)}\alpha^{-ns}\cdot\frac{1}{q_{n-3}^{2s}},
\end{equation*}
and the $s$-volume of the cover of $\mathfrak{F}_{22,n,k}$ is
\begin{equation*}
\sum_{\substack{
                                    a_1,\ldots,a_{n}\in \mathbb{N} \\
                                    a_ka_{k+1}\ge B^{n}, ~1\le a_n< \alpha^{n} }}\Big(\frac{1}{B^{n}a_{n}q_{n-1}^{2}}\Big)^{s}\ll n\sum_{a_1,\ldots,a_{n-3}\in \N}B^{-ns}\alpha^{n(1-s)}\cdot\frac{1}{q_{n-3}^{2s}}.
\end{equation*}
Hence
$$\mathcal{H}^{s}(\mathfrak{F}_{2}(B))\ll\liminf_{N\to\infty}\sum_{n=N}^{\infty}\sum_{k=1}^{n-2}n\sum_{a_1\ldots,a_{n-3}\in \mathbb{N}}\frac{1}{B^{(3s-1-s^2)n}q_{n-3}^{2s}}
\le\liminf_{N\to\infty}\sum_{n=N}^{\infty}B^{-3n\varepsilon}<\infty.$$
\end{proof}

Combing Lemmas \ref{up1} and \ref{up2}, we obtain that
$\dim_{H}\mathcal{F}_2(B)\le s_B.$

\subsection{The lower bound for $\mathcal{F}_2(B)$}
For any integers $M, L,$ we define the pre-dimensional number $S=S_{L,B}(M)$ to be the solution of
$$\sum_{1\le a_1,\ldots,a_L\le M}\frac{1}{B^{(3s-1-s^2)L}q_{L}^{2s}}=1.$$
By Corollary \ref{corollary}, we have $S\to s_B$ as $L, M\to\infty.$

To obtain the lower bound of $\dim_{H}{\mathcal{F}}_2(B)$, we will construct an appropriate Cantor subset of $\mathcal{F}_2(B)$ and then apply
the following mass distribution principle.
\begin{lem} [Falconer, \cite{F}]\label{mass}
Let $E\subseteq [0,1]$ be a Borel set and $\mu$
be a measure with $\mu(E)> 0.$  Suppose there are positive constants $c$ and $r_0$ such that
$$ \mu\big(B(x,r)\big)\le cr^{s}$$
for any ball $B(x,r)$ with radius $r\le r_0$.  Then $ \dim_{H}E\geq s.$
\end{lem}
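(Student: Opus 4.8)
The plan is to carry out the classical covering argument that converts the pointwise upper bound on $\mu$ into a strictly positive lower bound for the Hausdorff measure $\mathcal{H}^{s}(E)$; since $\mathcal{H}^{s}(E)>0$ forces $\dim_{H}E\ge s$, this suffices.

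First I would fix an arbitrary countable cover $\{U_{i}\}_{i\ge 1}$ of $E$ with $\mathrm{diam}(U_{i})\le r_{0}$ for every $i$ (covers by sets of arbitrarily small diameter compute $\mathcal{H}^{s}$, so no generality is lost). Discarding the $U_{i}$ that miss $E$, for each remaining index I choose a point $x_{i}\in U_{i}$; then $U_{i}\subseteq B\big(x_{i},\mathrm{diam}(U_{i})\big)$, because every point of $U_{i}$ lies within distance $\mathrm{diam}(U_{i})$ of $x_{i}$, and the radius $\mathrm{diam}(U_{i})\le r_{0}$ is admissible in the hypothesis. Hence $\mu(U_{i})\le\mu\big(B(x_{i},\mathrm{diam}(U_{i}))\big)\le c\,\mathrm{diam}(U_{i})^{s}$.

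Next I would combine this with the fact that $\{U_{i}\}$ covers $E$ together with countable subadditivity of $\mu$, obtaining $\mu(E)\le\sum_{i}\mu(U_{i})\le c\sum_{i}\mathrm{diam}(U_{i})^{s}$, so that $\sum_{i}\mathrm{diam}(U_{i})^{s}\ge\mu(E)/c$. Taking the infimum over all such covers yields $\mathcal{H}^{s}_{r_{0}}(E)\ge\mu(E)/c$, and as this bound does not depend on the mesh size it persists in the limit, giving $\mathcal{H}^{s}(E)\ge\mu(E)/c>0$. Consequently $\dim_{H}E\ge s$, which is the claim.

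The argument is essentially routine; the only points needing a word of care are the elementary geometric remark that a set of diameter $d$ sits inside a ball of radius $d$ about any of its points — this is exactly what allows the ball-formulated hypothesis to be applied to an arbitrary cover — and the observation that restricting the diameters of cover sets to be at most $r_{0}$ does not change the value of $\mathcal{H}^{s}$. If one prefers closed balls (the spherical Hausdorff measure) in place of open ones, the same chain of inequalities goes through unchanged, and the two normalisations of $\mathcal{H}^{s}$ differ only by a bounded multiplicative factor, which is immaterial for a statement about dimension.
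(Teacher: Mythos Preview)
Your argument is correct and is precisely the standard covering proof of the mass distribution principle. The paper, however, does not give any proof of this lemma: it is quoted as a known result from Falconer's book with no argument supplied, so there is nothing in the paper to compare your proof against.
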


\subsubsection{Cantor subset construction}
Put $\alpha=B^{1-S}.$ Choose a rapidly increasing sequence  of integers $\{m_k\}_{k\ge1}$ such that $m_0=0$ and $$m_k\ge\frac{64BL^2k^{4}(m_1+\cdots+m_{k-1})}{\log 2}+8kLB^2.$$
   Define another integer sequence $\{n_k\}_{k\ge 0}$ as follows
$$n_0=-1,\ \ n_k=n_{k-1}+m_{k}L+3.$$
We define a desired subset of $\mathcal{F}_2(B)$ as
\begin{align*}
  E=\Big\{x\in[0,1)\colon \alpha^{n_{k}}\le a_{n_k-1}(x)\le&2\alpha^{n_{k}},  
                       \frac{B^{n_k}}{\alpha^{n_k}}\le a_{n_{k}}(x)\le \frac{2B^{n_k}}{\alpha^{n_k}},
                  \alpha^{n_{k}}\le a_{n_{k}+1}(x)\le2\alpha^{n_{k}}
                   \\ &\text{ for } k\ge1
  \text{ and } a_n(x)\in\{1,\ldots,M\} \text{ for other }n\in \N\Big\}.
\end{align*}

We introduce a symbolic space to study the set $E$. Define $D_0=\{\emptyset\}$ and
\begin{align*}
 D_n=\left\{(a_1,\ldots,a_n)\in \mathbb{N}^{n}\colon
                                                        \begin{array}{c}
\alpha^{n_{k}} \le a_{n_k-1}\le2\alpha^{n_{k}},
\frac{B^{n_k}}{\alpha^{n_k}}\le a_{n_k}\le \frac{2B^{n_k}}{\alpha^{n_k}},
\alpha^{n_{k}}\le a_{n_{k}+1}\le2\alpha^{n_{k}}\\  \text{ for } k\ge1 \text{ with } n_k+1\le n,
 \text{ and } a_j\in\{1,\ldots,M\} \text{ for other }j\le n\end{array}\right\}.
\end{align*}
This set is just the collection of the prefixes of the continued fraction expansions of the points in $E.$ For   $n\ge 1$ and $(a_1,\ldots,a_n)\in D_n,$
we call
$$J_n:=J_n(a_1,\ldots,a_n):=\bigcup_{a_{n+1}}I_{n+1}(a_1,\ldots,a_n,a_{n+1})$$
a fundamental interval of order $n,$ where the union is taken over all integers $a_{n+1}$ such that $(a_1,\ldots, a_n, a_{n+1})\in D_{n+1}.$ Then
$$E=\bigcap_{n=1}^\infty\bigcup_{(a_1,\ldots, a_n)\in D_n}J_n(a_1,\ldots, a_n).$$

\subsubsection{Lengths of fundamental intervals}
We calculate the lengths of fundamental intervals by considering several cases.

\noindent\textbf{Case I.} When $n_{k-1}+1\le n\le n_k-3$ for  $k\ge1,$ since
$$J_n(a_1,\ldots,a_n)=\bigcup_{1\le a_{n+1}\le M}I_{n+1}(a_1,\ldots,a_n,a_{n+1}),$$
we have
$$\frac{1}{8q_n^{2}}\le\sum_{a_{n+1}=1}^{M}\frac{1}{2q_{n+1}^{2}} \le |J_n(a_1,\ldots,a_n)|\le\sum_{a_{n+1}=1}^{M}\frac{1}{q_{n+1}^{2}}\le\frac{4}{q_n^{2}},$$
In particular for $n=n_{k-1}+1,$ since
$$\alpha^{n-1}\le a_{n-2}\le2\alpha^{n-1},
\frac{B^{n-1}}{\alpha^{n-1}}\le a_{n-1}\le \frac{2B^{n-1}}{\alpha^{n-1}},
\alpha^{n-1}\le a_{n}\le2\alpha^{n-1},$$
we deduce that
$$\frac{1}{2^{15}(B\alpha)^{2(n-1)}q_{n-3}^{2}}\le |J_n(a_1,\ldots,a_n)|\le\frac{4}{(B\alpha)^{2(n-1)}q_{n-3}^{2}}.$$

\noindent\textbf{Case II.} When $n=n_{k}-2,$ since
$$J_{n}(a_1,\ldots,a_n)=\bigcup_{\alpha^{n+2}\le a_{n+1}\le2\alpha^{n+2}}I_{n+1}(a_1,\ldots,a_n,a_{n+1}),$$
we have
$$|J_{n}(a_1,\ldots,a_n)|\le\sum_{\alpha^{n+2}\le a_{n+1}\le2\alpha^{n+2}}\le\frac{4}{\alpha^{n+2}q_n^{2}},$$
$$|J_{n}(a_1,\ldots,a_n)|\ge\sum_{\alpha^{n+2}\le a_{n+1}\le2\alpha^{n+2}}\frac{1}{4(a_{n+1}q_n)^{2}}\ge\frac{1}{16\alpha^{n+2}q_n^{2}}.$$

\noindent\textbf{Case III.} When $n=n_{k}-1,$ since
$$J_{n}(a_1,\ldots,a_n)=\bigcup_{\frac{B^{n+1}}{\alpha^{n+1}}\le a_{n+1}\le \frac{2B^{n+1}}{\alpha^{n+1}}}I_{n+1}(a_1,\ldots,a_n,a_{n+1}),$$
we have
$$|J_{n}(a_1,\ldots,a_n)|\le \sum_{\frac{B^{n+1}}{\alpha^{n+1}}\le a_{n+1}\le \frac{2B^{n+1}}{\alpha^{n+1}}}\frac{1}{(a_{n+1}q_n)^{2}}\le \frac{4\alpha^{n+1}}{B^{n+1}q_n^{2}},$$
$$|J_{n}(a_1,\ldots,a_n)|\ge \sum_{\frac{B^{n+1}}{\alpha^{n+1}}\le a_{n+1}\le \frac{2B^{n+1}}{\alpha^{n+1}}}\frac{1}{4(a_{n+1}q_n)^{2}}\ge \frac{\alpha^{n+1}}{16B^{n+1}q_n^{2}}.$$

\noindent\textbf{Case IV.} When $n=n_{k},$ since
$$J_{n}(a_1,\ldots,a_n)=\bigcup_{\alpha^{n}\le a_{n+1}\le2\alpha^{n}}I_{n+1}(a_1,\ldots,a_n,a_{n+1}),$$
similar with the \textbf{Case II}, we obtain
$$\frac{1}{16\alpha^{n}q_n^{2}}\le|J_{n}(a_1,\ldots,a_n)|\le\frac{4}{\alpha^{n}q_n^{2}}.$$
Furthermore, noting that
$$\alpha^{n}\le a_{n-1}\le2\alpha^{n},~
\frac{B^{n}}{\alpha^{n}}\le a_{n}\le \frac{2B^{n}}{\alpha^{n}},$$
we have
$$\frac{1}{2^{12}\alpha^{n}B^{2n}q_{n-2}^{2}}\le|J_{n}(a_1,\ldots,a_n)|\le\frac{4}{\alpha^{n}B^{2n}q_{n-2}^{2}}.$$

\subsubsection{Gap estimation}
We estimate the gap between $J_{n}(a_1,\ldots,a_n)$ and its
adjacent fundamental interval of the same order $n$. These   estimations play an important role in  estimating the
measure of general balls.

For $(a_1,\ldots,a_n)\in D_n,$ let $G^l_n(a_1,\ldots, a_n)$ (respectively, $G^r_n(a_1,\ldots, a_n)$) denote  the distance between $J_n(a_1,\ldots, a_n)$ and its left (respectively, right) adjacent fundamental  interval
$$J_n'=J_n'(a_1,\ldots, a_{n-1}, a_n-1) ~\text{(respectively, $J_n''=J_n''(a_1,\ldots, a_{n-1}, a_n+1)$)} \text{ if exists.}$$
And
%
$G_n(a_1,\ldots, a_n)=\min\{G^l_n(a_1,\ldots, a_n), G^r_n(a_1,\ldots, a_n)\}.$
\begin{lem}\label{gap}
For  $n\ge 1$, we have
$$G_n(a_1,\ldots, a_n)\gg |J_n(a_1,\ldots, a_n)|,$$
where the implied constant  depends only upon $M$.
\end{lem}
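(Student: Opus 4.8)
The plan is to use the combinatorial geometry of sub-cylinders provided by Proposition \ref{cylinder}. For any $(a_1,\ldots,a_n)\in D_n$, the fundamental interval $J_n(a_1,\ldots,a_n)$ is a union of consecutive sub-cylinders $I_{n+1}(a_1,\ldots,a_n,a_{n+1})$ with $a_{n+1}$ running over an interval $[\ell_1,\ell_2]$: one has $[\ell_1,\ell_2]=[1,M]$ in Case I, and $[\ell_1,\ell_2]=[\ell,2\ell]$ with $\ell$ equal to $\alpha^{n+2}$, $B^{n+1}/\alpha^{n+1}$, or $\alpha^n$ in Cases II, III, IV respectively. Crucially, the two order-$n$ neighbours $J_n(a_1,\ldots,a_{n-1},a_n\pm1)$ (when they exist) are built from sub-cylinders of $I_n(a_1,\ldots,a_{n-1},a_n\pm1)$ over the \emph{same} range $[\ell_1,\ell_2]$, since the admissibility constraint on the $(n+1)$-st digit in $D_{n+1}$ depends on the index $n+1$, not on the value of $a_n$. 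I will combine this with the standard estimates $q_{n+1}\asymp a_{n+1}q_n$, hence $|I_{n+1}(a_1,\ldots,a_n,a_{n+1})|\asymp(a_{n+1}q_n)^{-2}$, together with $\sum_{a>\ell_2}(aq_n)^{-2}\asymp(\ell_2 q_n^2)^{-1}$.

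Next I would fix the orientation. Inside $I_n(a_1,\ldots,a_n)$ the sub-cylinders accumulate, as $a_{n+1}\to\infty$, at the endpoint $p_n/q_n$, while the sub-cylinder with $a_{n+1}=1$ abuts the other endpoint $(p_n+p_{n-1})/(q_n+q_{n-1})$. The neighbour $I_n(a_1,\ldots,a_{n-1},a_n-1)$ shares the endpoint $p_n/q_n$ with $I_n(a_1,\ldots,a_n)$, and the neighbour $I_n(a_1,\ldots,a_{n-1},a_n+1)$ shares the endpoint $(p_n+p_{n-1})/(q_n+q_{n-1})$; moreover, by the identity $[a_1,\ldots,a_n,1]=[a_1,\ldots,a_{n-1},a_n+1]$, the latter point is exactly the accumulation endpoint of the sub-cylinders of $I_n(a_1,\ldots,a_{n-1},a_n+1)$. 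Therefore the portion of the gap lying inside $I_n(a_1,\ldots,a_n)$ on the side of the first neighbour contains $\bigcup_{a_{n+1}>\ell_2}I_{n+1}(a_1,\ldots,a_n,a_{n+1})$, and the portion lying inside $I_n(a_1,\ldots,a_{n-1},a_n+1)$ on the side of the second neighbour contains $\bigcup_{a_{n+1}>\ell_2}I_{n+1}(a_1,\ldots,a_{n-1},a_n+1,a_{n+1})$. Both of these have Lebesgue measure $\asymp(\ell_2 q_n^2)^{-1}$ (using $q_n\le q_n+q_{n-1}\le 2q_n$), so $G_n(a_1,\ldots,a_n)\gg(\ell_2 q_n^2)^{-1}$.

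Finally I would compare with $|J_n(a_1,\ldots,a_n)|$. In Cases II--IV, summing $|I_{n+1}|\asymp(a_{n+1}q_n)^{-2}$ over $a_{n+1}\in[\ell,2\ell]$ gives $|J_n|\asymp(\ell q_n^2)^{-1}\asymp(\ell_2 q_n^2)^{-1}$ (this is just the length estimate already recorded for those cases), whence $G_n\gg|J_n|$ with an absolute constant. In Case I, $\ell_2=M$ and $|J_n|\asymp q_n^{-2}$ (also in the refined sub-case $n=n_{k-1}+1$, where $|J_n|\asymp(B\alpha)^{-2(n-1)}q_{n-3}^{-2}\asymp q_n^{-2}$), so $G_n\gg(Mq_n^2)^{-1}\gg M^{-1}|J_n|$; this is the sole source of the dependence on $M$. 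Taking the worse of the two constants yields $G_n(a_1,\ldots,a_n)\gg|J_n(a_1,\ldots,a_n)|$ with implied constant depending only on $M$.

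The main obstacle is the orientation bookkeeping in the second step: one must track, for each of the four cases and each of the at most two existing neighbours, which endpoint of $I_n(a_1,\ldots,a_n)$ is shared and whether it is an accumulation endpoint or an $a_{n+1}=1$ endpoint of the pertinent cylinder; the case of $a_n$ at an extreme of its admissible range, where one neighbour is absent, is harmlessly covered by the ``if exists'' clause. Once the orientation is settled, the remaining measure bounds are immediate from $|I_{n+1}|\asymp(a_{n+1}q_n)^{-2}$ and the lengths of the fundamental intervals established in Cases I--IV.
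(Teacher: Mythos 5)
Your proposal is correct and follows the same essential strategy as the paper: use Proposition \ref{cylinder} to identify, next to $J_n$, a union of sub-cylinders $\bigcup_{a_{n+1}>\ell_2}I_{n+1}$ that lies entirely inside the gap, and then estimate its measure via $|I_{n+1}|\asymp (a_{n+1}q_n)^{-2}$.

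The one genuine difference is organizational. The paper splits into four cases (Gap I--IV). In Gap I it computes both $G_n^l$ and $G_n^r$ explicitly, one gap lying inside $I_n(a_1,\ldots,a_n)$ and the other inside $I_n(a_1,\ldots,a_n+1)$ (matching your scheme exactly), while in Gaps II--IV it instead bounds each of $G_n^l$ and $G_n^r$ from below by the distance from an endpoint of $J_n$ to the corresponding endpoint of $I_n(a_1,\ldots,a_n)$, i.e., using the slack at both ends of the middle interval $[\ell_1,\ell_2]\subsetneq[1,\infty)$. You instead always use the accumulation-side gaps (one inside $I_n(a_1,\ldots,a_n)$, the other inside the neighbour $I_n(a_1,\ldots,a_n+1)$), which treats all four cases at once since $\ell_1=1$ or $\ell_1>1$ makes no difference for those two particular gap pieces. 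This yields a cleaner unified bound $G_n\gg(\ell_2 q_n^2)^{-1}$ and correctly isolates Case I (where $\ell_2=M$ rather than $\ell_2\asymp\ell$) as the sole source of the $M$-dependence of the implied constant, which matches the paper's $1/(40M)$ in Gap I. Your treatment of the ``if exists'' clause (absent neighbour means even more slack) and the observation that the admissible $a_{n+1}$-range in $D_{n+1}$ depends only on the index $n+1$ and not on $a_n$ are both points the paper uses implicitly and you make explicit; both are fine.
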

\begin{proof}
Without loss of generality, we assume that $n$ is even (the estimation for odd $n$  can be carried in almost the
same way). There exists $k\ge 1$ such that $n_{k-1}+1\le n<n_k+1$ for some $k\ge 1$. We consider   four cases.
%

\smallskip

\noindent\textbf{Gap I.} For the case $n_{k-1}+1\le n\le n_k-3,$ we have
$$J_n(a_1,\ldots,a_n)=\bigcup_{1\le a_{n+1}\le M}I_{n+1}(a_1,\ldots,a_n,a_{n+1}).$$
Then by Proposition \ref{cylinder}, the right gap interval between $J_n$ and $J_n''$ is just
$$\bigcup_{a_{n+1}>M}I_{n+1}(a_1,\ldots,a_n+1,a_{n+1})$$
and the left gap interval between $J_n$ and $J_n'$ is
$$\bigcup_{a_{n+1}>M}I_{n+1}(a_1,\ldots,a_n,a_{n+1}).$$
It follows that
\begin{align*}
G_n^{r}(a_1,\ldots,a_n)&=\frac{(M+1)(p_n+p_{n-1})+p_{n-1}}{(M+1)(q_n+q_{n-1})+q_{n-1}}-\frac{p_n+p_{n-1}}{q_n+q_{n-1}}
\\&=\frac{1}{((M+1)(q_n+q_{n-1})+q_{n-1})(q_{n}+q_{n-1})}
\end{align*}
and
$$G_n^{l}(a_1,\ldots,a_n)=\frac{(M+1)p_n+p_{n-1}}{(M+1)q_n+q_{n-1}}-\frac{p_n}{q_n}=\frac{1}{((M+1)q_n+q_{n-1})q_n}.$$
So
$$G_n(a_1,\ldots,a_n)=\frac{1}{((M+1)(q_n+q_{n-1})+q_{n-1})(q_{n}+q_{n-1})}.$$
Comparing $G_n(a_1,\ldots,a_n)$ with $J_n(a_1,\ldots,a_n),$ we obtain
$$G_n(a_1,\ldots,a_n)\ge\frac{1}{40M}|J_n(a_1,\ldots,a_n)|.$$
\smallskip

\noindent\textbf{Gap II.} For the case $ n=n_k-2,$ we have
$$J_n(a_1,\ldots,a_n)=\bigcup_{\alpha^{n+2}\le a_{n+1}\le2\alpha^{n+2}}I_{n+1}(a_1,\ldots,a_n,a_{n+1}).$$
Since  $J_n$ lies in the middle part of $I_n(a_1,\ldots,a_n)$, the gap $G_n^{r}(a_1,\ldots,a_n)$ is at least
the distance between the right endpoint of  $I_n(a_1,\ldots,a_n)$ and that of $J_n(a_1,\ldots,a_n),$ i.e.,
$$G_n^{r}(a_1,\ldots,a_n)\ge \Big|\frac{p_n+p_{n-1}}{q_n+q_{n-1}}
-\frac{(\alpha^{n+2}+1)p_n+p_{n-1}}{(\alpha^{n+2}+1)q_n+q_{n-1}}\Big|
=\frac{\alpha^{n+2}}{((\alpha^{n+2}+1)q_n+q_{n-1})(q_n+q_{n-1})}.$$
In a similar way,
$$G_n^{l}(a_1,\ldots,a_n)
\ge\Big|\frac{(2\alpha^{n+2}+1)p_n+p_{n-1}}{(2\alpha^{n+2}+1)q_n+q_{n-1}}-
\frac{p_n}{q_n}\Big|=\frac{1}{((2\alpha^{n+2}+1)q_n+q_{n-1})q_n}.$$
So
$$G_n(a_1,\ldots,a_n)\ge\frac{1}{((2\alpha^{n+2}+1)q_n+q_{n-1})q_n}
\ge\frac{1}{5\alpha^{n+2}q_n^{2}}\ge\frac{1}{20}|J_n(a_1,\ldots,a_n)|.$$%
\smallskip

\noindent\textbf{Gap III.} For the case $ n=n_k-1,$ we have
$$J_n(a_1,\ldots,a_n)=\bigcup_{\frac{B^{n+1}}{\alpha^{n+1}}\le a_{n+1}\le \frac{2B^{n+1}}{\alpha^{n+1}}}I_{n+1}(a_1,\ldots,a_n,a_{n+1}).$$
As in \textbf{Gap II} we deduce that
$$G_n^{r}(a_1,\ldots,a_n)\ge \frac{\frac{B^{n+1}}{\alpha^{n+1}}}{((\frac{B^{n+1}}{\alpha^{n+1}}+1)q_n+q_{n-1})(q_n+q_{n-1})}$$
and
$$G_n^{l}(a_1,\ldots,a_n)\ge \frac{1}{((\frac{2B^{n+1}}{\alpha^{n+1}}+1)q_n+q_{n-1})q_n}.$$
It follows that
$$G_n(a_1,\ldots,a_n)\ge\frac{1}{((\frac{2B^{n+1}}{\alpha^{n+1}}+1)q_n+q_{n-1})q_n}\ge \frac{1}{20}|J_n(a_1,\ldots,a_n)|.$$

\noindent\textbf{Gap IV.} For the case $ n=n_k,$ we have
$$J_n(a_1,\ldots,a_n)=\bigcup_{\alpha^{n}\le a_{n+1}\le2\alpha^{n}}I_{n+1}(a_1,\ldots,a_n,a_{n+1}).$$
We argue in a similar way to obtain that
$$G_n(a_1,\ldots,a_n)\ge\frac{1}{20}|J_n(a_1,\ldots,a_n)|.$$
\end{proof}

\subsubsection{Mass distribution on $E$}
Put $\beta=B^{(3S-1-S^2)}.$ Recall that
\begin{equation}\label{pre}
\sum_{1\le a_1,\ldots,a_L\le M}\frac{1}{\beta^Lq_{L}^{2S}}=1.\end{equation}
 We distribute a mass on $E$  via assigning the measure for each fundamental intervals as follows.
 For $1\le m\le m_1,$ we define
$$\mu(J_{mL}(a_1,\ldots,a_{mL}))
=\prod_{t=0}^{m-1}\frac{1}{\beta^{L}q_{L}^{2S}(a_{tL+1},\ldots,a_{(t+1)L})}$$
and when $(m-1)L<n<mL,$
$$\mu(J_{n}(a_1,\ldots,a_n))=\sum_{a_{n+1},\ldots,a_{mL}}\mu(J_{mL}(a_1,\ldots,a_n,a_{n+1},\ldots,a_{mL})),$$
where the summation is taken over all $(a_{n+1},\ldots,a_{mL})$ with $(a_{1},\ldots,a_{mL})\in D_{mL}.$

For $n=m_1L+1,$ we define
 $\mu(J_{n_1-1}(a_1,\ldots,a_{n_1-1}))
=\frac{1}{\alpha^{n_1}}\mu(J_{m_1L}(a_1,\ldots,a_{m_1L})).$

For $n=n_1,$ we define
 $\mu(J_{n_1}(a_1,\ldots,a_{n_1}))
=\frac{\alpha^{n_1}}{B^{n_1}}\mu(J_{n_1-1}(a_1,\ldots,a_{n_1-1})).$

For $n=n_1+1,$ we define
 $\mu(J_{n_1+1}(a_1,\ldots,a_{n_1+1}))
=\frac{1}{\alpha^{n_1}}\mu(J_{n_1}(a_1,\ldots,a_{n_1})).$

We then assign inductively the mass on fundamental intervals of other levels:
For $k\ge 2$ and $1\le m\le m_k,$
\begin{align*}
 \mu(J_{n_{k-1}+1+mL}(a_1,\ldots,a_{n_{k-1}+1+mL}))
=&\mu(J_{n_{k-1}+1}(a_1,\ldots,a_{n_{k-1}+1}))\\&
\cdot\prod_{t=0}^{m-1}\frac{1}{\beta^{L}q_{L}^{2S}(a_{n_{k-1}+tL+2},\ldots,a_{n_{k-1}+(t+1)L+1})},
\end{align*}
and when $(m-1)L<n<mL$,
$\mu(J_{n}(a_1,\ldots,a_n))=\sum
\mu(J_{n_{k-1}+1+mL}(a_1,\ldots,a_{n_{k-1}+1+mL})).$

For $n=n_{k-1}+m_{k}L+2,$
 $\mu(J_{n_k-1}(a_1,\ldots,a_{n_k-1}))
=\frac{1}{\alpha^{n_k}}\mu(J_{n_{k-1}+m_{k}L+1}(a_1,\ldots,a_{n_{k-1}+m_{k}L+2})).$

For $n=n_k,$  $\mu(J_{n_k}(a_1,\ldots,a_{n_k}))
=\frac{\alpha^{n_k}}{B^{n_k}}\mu(J_{n_k-1}(a_1,\ldots,a_{n_k-1})).$

For $n=n_k+1,$
 $\mu(J_{n_k+1}(a_1,\ldots,a_{n_k+1}))
=\frac{1}{\alpha^{n_k}}\mu(J_{n_k}(a_1,\ldots,a_{n_k})).$

Have distributed the mass amongst fundamental intervals, for which  the consistency property is ensured by   (\ref{pre}), we  extend it to a measure, denoted still by $\mu$, on all Borel sets.

\subsubsection{H\"{o}lder exponent of $\mu$ for fundamental intervals}
\mbox{}

\underline{\textsc{Case 1$\colon$} $n=mL$ for $1\le m<m_1.$}
\begin{align*}
  \mu(J_{mL}(a_1,\ldots,a_{mL}))
  = & \prod_{t=0}^{m-1}\frac{1}{\beta^{L}q_{L}^{2S}(a_{tL+1},\ldots,a_{(t+1)L})} \\
  \le&4^{m}\frac{1}{q_{mL}^{2S}(a_1,\ldots,a_{mL})}
  \le 32|J_{mL}(a_1,\ldots,a_{mL})|^{S-\frac{2}{L}}.
\end{align*}

\underline{\textsc{Case 2$\colon$} $n=m_1L=n_1-2.$}
\begin{align*}
  \mu(J_{m_1L}(a_1,\ldots,a_{m_1L}))= & \prod_{t=0}^{m_1-1}\frac{1}{\beta^{L}q_{L}^{2S}(a_{tL+1},\ldots,a_{(t+1)L})}
  \le \frac{4^{m_1}}{\beta^{m_1L}}\cdot\frac{1}{q_{m_1L}^{2S}(a_1,\ldots,a_{mL})}\\
  \le&\frac{1}{\alpha^{m_1LS}}\cdot\Big(\frac{1}{q_{m_1L}^{2}(a_1,\ldots,a_{mL})}\Big)^{S-\frac{2}{L}} \ \ (\text{by } \alpha=B^{1-S} \text{ and } S>\frac{1}{2})\\
  \le&\Big(\frac{1}{\alpha^{n+2}}\cdot\frac{1}{q_{n}^{2}(a_1,\ldots,a_n)}\Big)^{S-\frac{3}{L}}
  \le  16|J_{n}(a_1,\ldots,a_{n})|^{S-\frac{3}{L}}.
\end{align*}

\underline{\textsc{Case 3$\colon$} $n=m_1L+1=n_1-1.$}
\begin{align*}
  \mu(J_{n_1-1}(a_1,\ldots,a_{n_1-1}))= & \frac{1}{\alpha^{n_1}}\cdot\mu(J_{n_1-2}(a_1,\ldots,a_{n_1-2}))
  \le\frac{16\alpha^{2n_1S}}{(\alpha\beta)^{n_1}}\Big(\frac{1}{q_{n_1-1}^{2}(a_1,\ldots,a_{n_1-1})}\Big)^{S-\frac{3}{L}}\\
  =&16\Big(\frac{\alpha^{n_1}}{B^{n_1}}\Big)^{S}
  \Big(\frac{1}{q_{n_1-1}^{2}(a_1,\ldots,a_{n_1-1})}\Big)^{S-\frac{3}{L}} 
  \le256|J_{n_1-1}(a_1,\ldots,a_{n_1-1})|^{S-\frac{3}{L}}.
  \end{align*}

\underline{\textsc{Case 4$\colon$} $n=m_1L+2=n_1.$}
\begin{align*}
  \mu(J_{n_1}(a_1,\ldots,a_{n_1}))= &\frac{\alpha^{n_1}}{B^{n_1}}\mu(J_{n_1-1}(a_1,\ldots,a_{n_1-1}))
  =\frac{1}{B^{n_1}}\mu(J_{n_1-2}(a_1,\ldots,a_{n_1-2}))\\
  \le&\frac{1}{B^{n_1}}\cdot\frac{1}{\beta^{n_1}}\cdot\Big(\frac{1}{q_{n_1-2}^{2}(a_1,\ldots,a_{n_1-2})}\Big)^{S-\frac{3}{L}}\\
  =&\Big(\frac{1}{\alpha^{n_1}B^{2n_1}}\Big)^{S}
  \cdot\Big(\frac{1}{q_{n_1-2}^{2}(a_1,\ldots,a_{n_1-2})}\Big)^{S-\frac{3}{L}}~~(\text{by }\alpha=B^{1-S})\\
  \le&2^{12}|J_{n_1}(a_1,\ldots,a_{n_1})|^{S-\frac{3}{L}}.
\end{align*}

\underline{\textsc{Case 5$\colon$} $n=n_1+1.$}
\begin{align*}
 \mu(J_{n_1+1}(a_1,\ldots,a_{n_1+1}))=&\frac{1}{\alpha^{n_1}}\mu(J_{n_1}(a_1,\ldots,a_{n_1}))
                                      =\frac{1}{(B\alpha)^{n_1}}\mu(J_{n_1-2}(a_1,\ldots,a_{n_1-2}))\\
 \le&\frac{1}{(B\alpha\beta)^{n_1}}
  \cdot\Big(\frac{1}{q_{n_1-2}^{2}(a_1,\ldots,a_{n_1-2})}\Big)^{S-\frac{3}{L}}\\
  \le&\Big(\frac{1}{(B\alpha)^{2n_1}}\Big)^{S}
  \cdot\Big(\frac{1}{q_{n_1-2}^{2}(a_1,\ldots,a_{n_1-2})}\Big)^{S-\frac{3}{L}}\\
  \le&2^{15}|J_{n_1+1}(a_1,\ldots,a_{n_1+1})|^{S-\frac{3}{L}}.
\end{align*}

\underline{\textsc{Case 6$\colon$} general case.}

We  give the estimation only for $J_{n_k-2}(a_1,\ldots,a_{n_k-2}).$
The other cases can be carried out in the same way. %
\begin{align*}
&\mu(J_{n_{k}-2})=\mu(J_{n_{k-1}+1})
 \cdot\prod_{t=0}^{m_k-1}\frac{1}{\beta^{L}q_{L}^{2S}(a_{n_{k-1}+tL+2},\ldots,a_{n_{k-1}+(t+1)L+1})}\\
 =&\left[\prod_{j=1}^{k-1}
  \left(\frac{1}{(B\alpha)^{n_j}}
  \prod_{t=0}^{m_j-1}\Big(\frac{1}{\beta^{L}q_{L}^{2S}(a_{n_{j-1}+tL+2},\ldots,a_{n_{j-1}+(t+1)L+1})}\Big)\right)\right]\\
 &\cdot\prod_{t=0}^{m_k-1}\frac{1}{\beta^{L}q_{L}^{2S}(a_{n_{k-1}+tL+2},\ldots,a_{n_{k-1}+(t+1)L+1})}\\
  \le& \prod_{j=1}^{k-1}
 \Big(\frac{1}{(B\alpha)^{n_j}} \cdot\frac{2^{m_j}}{\beta^{m_jL}q_{m_jL}^{2S}(a_{n_{j-1}+2},\ldots,a_{n_{j}-2})}\Big)
  \cdot\frac{2^{m_k}}{\beta^{m_kL}q_{m_kL}^{2S}(a_{n_{k-1}+2},\ldots,a_{n_{k}-2})}\\
  \le &\prod_{j=1}^{k-1}\Big(\frac{2^{m_j}\beta^{n_{j-1}+3}}{(B\alpha\beta)^{n_j}q_{m_jL}^{2S}(a_{n_{j-1}+2},\ldots,a_{n_{j}-2})}\Big)
  \cdot\frac{\beta^{2^{m_k}n_{k-1}+3}}{\beta^{n_k}q_{m_kL}^{2S}(a_{n_{k-1}+2},\ldots,a_{n_{k}-2})}\\
  \le& (2B)^{n_1+\cdots+n_{k-1}+8k}\Big(\frac{1}{q^{2}_{n_{k-1}+1}}\Big)^{S-\frac{3}{L}}
  \cdot\Big(\frac{1}{\alpha^{n_k}q_{m_kL}^{2}(a_{n_{k-1}+2},\ldots,a_{n_{k}-2})}\Big)^{S-\frac{3}{L}}
  \le\Big(\frac{1}{\alpha^{n_k}q_{n_k-2}^{2}}\Big)^{S-\frac{4}{L}}.
\end{align*}

In summary,  we have shown that for any $n\ge 1$ and $(a_1,\ldots,a_n)\in D_n,$
$$\mu(J_n(a_1,\ldots,a_n))\ll |J_n(a_1,\ldots,a_n)|^{S-\frac{4}{L}}.$$

\subsubsection{The H\"{o}lder exponent for a general ball}
We are in a position to estimate the measure $\mu$ of a general ball $B(x,r).$

Let $B(x,r)$ be a ball with $x\in E$ and with radius $r$ small enough. There exists
$n\in\mathbb N$ and    $(a_1, a_2, \ldots, a_n)\in D_n$ such that $x\in J_n(a_1,\ldots,a_n)$   and
 $G_{n+1}(a_1,\dots,a_{n+1})\le r<G_n(a_1,\dots,a_n).$ Whence $B(x,r)$ can intersect only one fundamental interval of order $n,$
namely $J_{n}(a_1,\ldots,a_n).$
\smallskip

\noindent\textbf{Case I.} $n_{k-1}+1\le n<n_k-2$ for some $k\ge1.$
Then $1\le a_{n+1}\le M$, $|J_n|\asymp\frac{1}{q_n^{2}},$ and
\begin{align*}
\mu(B(x,r))\le \mu(J_n)\ll|J_{n}|^{S-\frac{4}{L}}\ll (\frac{1}{q_{n+1}^{2}})^{S-\frac{4}{L}}\ll|J_{n+1}|^{S-\frac{4}{L}}\ll G_{n+1}^{S-\frac{4}{L}}\ll r^{S-\frac{4}{L}},
\end{align*}
where the penultimate inequality follows from Lemma \ref{gap}.

\smallskip

\noindent\textbf{Case II.} $n=n_k-2$ for some $k\ge1.$

(1) $r\le |I_{n_{k}-1}(a_1,\ldots,a_{n_{k}-1})|.$

In this case, the ball $B(x,r)$ can intersect at most four cylinders
of order $n_{k}-1,$ which are  $I_{n_{k}-1}(a_1,\ldots,a_{n_{k}-1}+i)$ with $i=-1,0,1,2$.
Thus we have
\begin{align*}
  \mu(B(x,r)) &\le 4\mu(J_{n_{k}-1})
  \ll |J_{n_{k}-1}|^{S-\frac{4}{L}}\ll G_{n+1}^{S-\frac{4}{L}} \ll r^{S-\frac{4}{L}}.
\end{align*}

(2) $r>|I_{n_{k}-1}(a_1,\ldots,a_{n_{k}-1})|.$

 In this case, since
 $|I_{n_{k}-1}(a_1,\ldots,a_{n_{k}-1})|\ge\frac{1}{2q_{n_{k}-1}^{2}}\ge \frac{1}{8\alpha^{2n_k}q_{n_{k}-2}^{2}},$
 in $J_{n_{k}-2}(a_1,\ldots,a_{n_{k}-2})$ there are at most
$$16r\alpha^{2n_k}q_{n_{k}-2}^{2}+2 \le 32r\alpha^{2n_k}q_{n_{k}-2}^{2}$$
number of fundamental intervals of order $n_{k}-1$ intersecting $B(x,r)$, and thus
\begin{align*}
  \mu(B(x,r)) &\le \min\Big\{\mu(J_{n_{k}-2}), 32r\alpha^{2n_k}q_{n_{k}-2}^{2} \mu(J_{n_{k}-1})\Big\}   \le \mu(J_{n_{k}-2})\min\Big\{1, 32r\alpha^{2n_k}q_{n_{k}-2}^{2}\cdot\frac{1}{\alpha^{n_k}}\Big\}\\
  &\ll |J_{n_{k}-2}|^{S-\frac{2}{L}} \min\Big\{1, 32r\alpha^{n_k}q_{n_{k}-2}^{2}\Big\}  \le \Big(\frac{1}{\alpha^{n_k}q_{n_k-2}^{2}}\Big)^{S-\frac{4}{L}}
  \Big(32r\alpha^{n_k}q_{n_{k}-2}^{2}\Big)^{S-\frac{4}{L}} \ll r^{S-\frac{4}{L}}.
\end{align*}
Here we use the fact that if $a,b>0$ and  $0<s<1$, then $\min\{a,b\}\le a^{1-s}b^s$.

\noindent\textbf{Case III.} $n=n_k-1$ or $n=n_{k}$ for some $k\ge1.$
\smallskip

Similar argument to \textbf{Case II} shows that $\mu(B(x,r))\ll r^{S-\frac{4}{L}}.$
\medskip

Combining all cases we assert that $\mu(B(x,r))\ll r^{S-\frac{4}{L}}$ for any $x\in E$ and $r$ small. The mass distribution principle (Lemma \ref{mass}) yields that
$$\dim_{H}E\ge S-\frac{4}{L}=S_{L,B}(M)-\frac{4}{L}.$$
Letting $L, M\to\infty,$ we conclude that $\dim_{H}{\mathcal{F}}_2(B)\ge s_B.$

\medskip

{\noindent \bf  Acknowledgements}. This work was supported by NSFC No. 12171172, 12201476. 
The authors thank Hussain Mumtaz and Feng Jing for helpful discussions.


\begin{thebibliography}{99}
	
\bibitem {BBH} Bakhtawar A,  Bos P and Hussain M 2020
                Hausdorff dimension of an exceptional set in the theory of continued fractions
               \emph{Nonlinearity}  33  2615-2639

\bibitem {BBH20} Bakhtawar A,  Bos P and Hussain M 2020
                The sets of Dirichlet non-improvable numbers versus well-approximable numbers
                \emph{Ergodic Theory Dynam. Systems} 40  3217-3235

\bibitem {B11} Bernstein F 1911
               \"{U}ber eine Anwendung der Mengenlehre auf ein aus der theorie der s\"{a}kularen st\"{o}rungen herr\"{u}hrendes problem
               \emph{Math. Ann.} 71 417-439
	
\bibitem {B12}  Borel \'{E} 1912
               Sur un probl\`{e}me de probabilit\'{e}s relatif aux fractions continues
               \emph{Math. Ann.}  72  578-584

\bibitem{Chung} Chung  K-L and  Erd\"{o}s P 1952
               On the application of the Borel-Cantelli lemma
               \emph{Trans. Amer. Math. Soc.} 72  179-186

\bibitem {DV86}  Diamond H and  Vaaler J 1986
               Estimates for partial sums of continued fraction partial quotients
                \emph{ Pacific J. Math.} 122  73-82

\bibitem{F}  Falconer K-J 2014
           \emph{ Fractal Geometry: Mathematical Foundations and Applications} 2nd edn
            (New York: Wiley)

\bibitem{G41}  Good I-J 1941
            The fractional dimensional theory of continued fractions
            \emph{Proc. Cambridge Philos. Soc.}   37 199-228

\bibitem{HMU02} Hanus P,  Mauldin R-D and  Urba\'{n}ski M 2002
               Thermodynamic formalism and multifractal analysis of conformal infinite iterated function systems
               \emph{Acta Math. Hungar.}    96 27-98


\bibitem{HHY21} Hu H,  Hussain M and  Yu Y-L 2021
             Limit theorems for sums of products of consecutive partial quotients of continued fractions
             \emph{Nonlinearity}  34 8143-8173

\bibitem{HWX20} Huang L-L, Wu J and Xu J 2020
              Metric properties of the product of consecutive partial quotients in continued fractions
              \emph{Israel J. Math.}  238  901-943

\bibitem{HKWW18}  Hussain M,   Kleinbock D,   Wadleigh  N and  Wang B-W 2018
                 Hausdorff measure of sets of Dirichlet non-improvable numbers
                 \emph{ Mathematika} 64  502-518

\bibitem{K35} Khintchine A 1935
             Metrische Kettenbruchprobleme
             \emph{Compositio Math.} 1 361-382

 \bibitem{KH63} Khintchine A 1964
               \emph{Continued Fractions }
               (Chicago, IL: University of Chicago Press)

\bibitem{KK22} Kim T and Kim W 2022
             Hausdorff measure of sets of Dirichlet non-improvable affine forms
              \emph{Adv. Math.} 403  Paper No. 108353, 39 pp

\bibitem{KW18}  Kleinbock D and  Wadleigh N 2018
              A zero-one law for improvements to Dirichlet's Theorem
             \emph{Proc. Amer. Math. Soc.} 146  1833-1844

\bibitem{Knopp} Knopp K 1926
                Mengentheoretische Behandlung einiger Probleme der diophantischen Approximationen und der transfiniten Wahrscheinlichkeiten
                \emph{Math. Ann.}  95  409-426

\bibitem{LWWX} Li B,  Wang B-W, Wu J and  Xu J 2014
              The shrinking target problem in the dynamical system of continued fractions
             \emph{ Proc. Lond. Math. Soc. (3) }  108 159-186


\bibitem{L97}  {\L}uczak T 1997
             On the fractional dimension of sets of continued fractions
              \emph{Mathematika}  44  50-53

\bibitem{MU96}  Mauldin R-D and Urba\'{n}ski M 1996
               Dimensions and measures in infinite iterated function systems
              \emph{Proc. London Math. Soc. (3)}  73  105-154

\bibitem{MU99} Mauldin  R-D and Urba\'{n}ski M 1999
               Conformal iterated function systems with applications to the geometry of continued fractions
               \emph{Trans. Amer. Math. Soc.}  351  4995-5025

\bibitem{MU03} Mauldin R-D and Urba\'{n}ski M 2003
            \emph{Graph Directed Markov Systems: Geometry and Dynamics of Limit Sets }
         (Cambridge: Cambridge University Press)

\bibitem{P88}  Philipp W 1988
              Limit theorems for sums of partial quotients of continued fractions
              \emph{Monatsh. Math.} 105 195-206


\bibitem{TTW22} Tan B, Tian C and  Wang B-W 2023
                The distribution of the large partial quotients in continued fraction expansions
                \emph{Sci. China Math. }  66  935-956


\bibitem{PW} Walters  P 1982
             \emph{An introduction to ergodic theory}
            (Berlin: Springer)


\bibitem{WW08} Wang  B-W and Wu J 2008
              Hausdorff dimension of certain sets arising in continued fraction expansions
              \emph{Adv. Math.}  218  1319-1339

\end{thebibliography}
\end{document}